\documentclass[a4paper,reqno]{amsart}
\usepackage{graphicx} 
\usepackage{subcaption}
\usepackage{multirow}
\usepackage{xcolor}
\usepackage{amsmath}
\usepackage{amsfonts}
\usepackage{amsthm}
\usepackage{float}
\usepackage{geometry}
\usepackage[normalem]{ulem}
\usepackage{algorithm} 
\usepackage{algpseudocode}
\usepackage{tikz}  
\usetikzlibrary{matrix,arrows,decorations.pathmorphing}  

\usepackage{booktabs}
\usepackage{array}
\usepackage{algorithm}
\usepackage{algpseudocode}

\usepackage{url,hyperref}
\hypersetup{
    citecolor=cyan,
    colorlinks=true,
    linkcolor=blue,
    filecolor=magenta,      
    urlcolor=cyan,
    pdftitle={Overleaf Example},
    pdfpagemode=FullScreen,
    }
\newtheorem{theorem}{Theorem}[section]

\newtheorem{proposition}[theorem]{Proposition}
\newtheorem{lemma}[theorem]{Lemma}

\newtheorem{formula}[theorem]{Formula}
\newtheorem{remark}{Remark}[section]

\numberwithin{equation}{section}

\usepackage[foot]{amsaddr}

\newcommand{\N}{\mathbb{N}}

\newcommand{\R}{\mathbb{R}}
\newcommand{\C}{\mathbb{C}}
\newcommand{\I}{{\rm i}}

\newcommand{\dd}{{\rm d}}

\newcommand{\M}{\mathcal{M}}
\newcommand{\hil}{\mathcal{H}}
\newcommand{\quadr}{\mathcal{Q}}
\newcommand{\siegel}{\mathbb{H}}

\newcommand{\trans}{\top}

\newcommand{\hquad}{\hspace{0.5em}}

\newcommand{\kw}[1]{\noindent\textbf{Keywords: }#1
}
\newcommand{\msc}[1]{\noindent\textbf{Mathematics Subject Classification (2020): }#1}

\usepackage{cleveref}
\usepackage{biblatex}
\title[Residual-Based Time Discretization on Nonlinear Approximation Manifolds]{Residual-Based Time Discretization on \\ Nonlinear Approximation Manifolds: \\ Analysis and Gaussian Applications}

\author{Eddy de Le\'on$^{\ast}$}
\thanks{$^{\ast}$Department of Mathematics,
Technical University of Munich,
85748 Garching, Germany, Email: eddy.de.leon@tum.de}

\author{Caroline Lasser$^{\dagger}$}
\thanks{$^{\dagger}$Department of Mathematics,
Technical University of Munich,
85748 Garching, Germany, Email: classer@tum.de}

\begin{document}

\begin{abstract}
    We study time-discrete parametric approximations of evolution equations in Hilbert spaces based on residual minimization. The solution is represented by a parametrized ansatz belonging to a low-dimensional nonlinear manifold, and time stepping is performed by minimizing suitably defined residuals at each step. Two natural residual formulations are considered: discretization followed by parametrization of the evolution equation, and discretization of the Dirac--Frenkel variational principle governing the parameter dynamics.
A unified error analysis is developed for both approaches within the family of $\zeta$-methods. The resulting bounds separate the effects of time discretization from those of residual minimization and yield first- and second-order convergence under Lipschitz, one-sided Lipschitz, and dissipativity assumptions. For the variational formulation, additional stability conditions involving the conditioning of the parametrization map arise naturally.
The framework is applied to Gaussian approximation manifolds, for which residual norms and gradients admit explicit closed-form expressions when polynomial operators are involved. This enables efficient implementation without spatial discretization. Numerical experiments for time-dependent Schr\"odinger equations illustrate the theoretical convergence rates and the influence of residual accuracy on conservation properties.
\end{abstract}

\maketitle

\bigskip
\msc{Primary 65M12; Secondary 65M15, 65L20, 41A30, 65D15}

\bigskip
\kw{parametric evolution equations in Hilbert spaces; nonlinear manifold approximation; residual-based time stepping; convergence analysis; Gaussian manifold}

\section{Introduction}

The numerical solution of time-dependent partial differential equations in high dimensions remains a central challenge in scientific computing. In particular, evolution equations of Schr\"odinger type arise ubiquitously in quantum dynamics, molecular physics, and wave propagation, where the solution typically exhibits oscillatory behavior, strong localization, and multi-scale structure. Standard grid-based discretizations quickly become infeasible as the
spatial dimension increases, whereas reduced-order approaches must carefully balance accuracy, stability, and computational cost.

\medskip
We consider time-dependent evolution equations posed in the Hilbert space 
$\hil = L^2(\Omega,\C)$ with a general right-hand side operator 
$f:[0,T]\times \Omega \times \hil \to \C$ of the form
\begin{equation} \label{eq:PDE_original}
\left\{
\begin{array}{ll}
\partial_t \psi(t,x) = f(t,x,\psi), & (t,x)\in [0,T]\times \Omega, \\[1ex]
\psi(0,x) = \psi_0(x), & x\in \Omega .
\end{array}
\right.
\end{equation}
The initial datum $\psi_0\in \hil$ may, for instance, be normalized with $\|\psi_0\|=1$. 
The right-hand side $f(t,\cdot,\psi)\in \hil$ may represent a time-dependent Schr\"odinger operator, 
\[
f(t,x,\psi) = \frac{1}{\I}\left(-\frac12 \Delta_x \psi(t,x) + V(t,x)\psi(t,x)\right),
\quad (t,x)\in [0,T]\times \Omega,
\]
or, more generally, a linear partial differential operator of the form
\[
f(t,x,\psi) = \sum_{\alpha} c_{\alpha}(t,x)\,
\frac{\partial^{|\alpha|}}{\partial x^\alpha}\psi(t,x),
\quad (t,x)\in [0,T]\times \Omega.
\]
Nonlinear evolution operators are likewise admissible, provided that the resulting
initial value problem is well posed in $\hil$, and in particular allows for some type 
of (one-sided) Lipschitz control.

\medskip
A powerful class of methods approximates the solution $\psi(t,\cdot)$ within a low-dimensional nonlinear manifold $\mathcal M$ embedded in $\hil$. Rather than resolving the full spatial domain, one represents the solution in the form
\[
\psi(t,\cdot) \approx u(\theta(t),\cdot),
\]
where the parameter vector $\theta(t)\in\Theta$ evolves in time, for a complex parameter space $\Theta$. Such parametrized ansatz spaces include Gaussian wave packets \cite{Heller_1976,Richings_etal2015}, Hagedorn bases \cite{Hagedorn1998,CL1}, neural networks  \cite{Raissi_etal2019,Gutierrez_etal2022,Bruna_etal2024,Datar_etal2026} also with Gaussian activation functions \cite{Anderson_etal2024,BCM,Dupuy_etal2025}, and more general manifolds. 
These representations can capture localization, phase information,
and deformation using comparatively few degrees of freedom. 

\medskip
A central mathematical framework underlying many parametrized evolution schemes is the Dirac--Frenkel variational principle \cite{Lu}, which projects the infinite-dimensional dynamics onto the tangent space of the approximation manifold. This yields a system of ordinary differential equations for the parameters $\theta(t)$ obtained through a linear least-squares condition. While the continuous-time formulation enjoys attractive geometric properties, including conservation of norm and energy in Hamiltonian systems \cite{LS}, its numerical implementation suffers from the so-called matrix singularity problem \cite{KAY1989} or tangent space collapse \cite{ZCVP}. 
An alternative viewpoint reverses the order of discretization and parametrization. One first discretizes the original evolution equation in time and subsequently parametrizes within the manifold each time step by minimizing a residual in the Hilbert space norm. This ``discretize-then-parametrize'' philosophy leads naturally to a sequence of nonlinear optimization problems for the parameters \cite{kvaal2023,schrader2026}. Both approaches are actively used in practice and give rise to different numerical schemes with distinct stability and convergence properties. Despite their widespread use, a unified mathematical understanding of these parametric time-stepping strategies remains incomplete,
and error bounds that account simultaneously for time discretization and manifold approximation seem to be rare. Notable exceptions are \cite{ZCVP}, which motivated the present work, the analysis of regularized schemes \cite{FLL,LN}, and \cite{BCM} for a 
continuous-time analysis of dynamical sampling \cite{BCM}. 

\medskip
Here, we aim at a unified framework for the analysis of time-discrete parametrized approximations of evolution equations in Hilbert spaces. We consider nonlinear approximation manifolds of the form
\[
\mathcal M = \{ u(\theta,\cdot)\in\hil \ : \ \theta \in \Theta \subset \mathbb C^p, p\geq 1 \},
\]
and formulate numerical schemes through the minimization of residual functions at each time step. Two natural residual structures arise:
(i) discretization--then--parametrization residuals, obtained by first applying a time discretization scheme within the family of $\zeta$-methods and subsequently optimizing the parameters on the manifold; 
(ii) parametrization--then--discretization residuals, derived from time discretizations of the Dirac--Frenkel variational principle.
Both approaches lead to iterative schemes of the form
\begin{align*}
&\theta_{k+1} = \arg\min_{\theta\in\Theta} \| r_k(\theta,\cdot) \|_{\mathcal H},\\
&u(\theta_{k+1},\cdot) \approx \psi(t_k,\cdot),\quad k\ge 0,
\end{align*}
where $r_k(\theta,\cdot)$ denotes the corresponding residual at the $k$th time step.

\medskip
Our first main contribution is a unified error analysis for both residual formulations without assumptions on the underlying approximation manifold, see \Cref{prop:discr-opt} and \Cref{prop:opt-discr}. We show that the global approximation error decomposes into a residual contribution arising from the manifold optimization and a classical time discretization contribution governed by the regularity of the exact solution and the choice of $\zeta$-scheme. This yields explicit convergence rates of order $h$ or $h^2$ under natural Lipschitz, one-sided Lipschitz, and dissipativity assumptions. For the parametrization--then--discretization approach, additional conditioning terms involving the singular values of the parametrizing gradient appear naturally and clarify stability requirements. A second contribution concerns the explicit evaluation of residual norms and their gradients for Gaussian approximation manifolds. When the evolution operator acts polynomially on Gaussian functions, all inner products required for residual minimization admit closed-form expressions in terms of Gaussian moments. We present compact formulas for these integrals and their parameter derivatives, which pave the way to efficient computation even in higher dimensions. When closed forms are unavailable, we briefly discuss quadrature-based approximations and their accuracy. This analytic tractability highlights an important advantage of Gaussian manifolds: spatial discretization can be avoided to a large extent, and the computational effort is concentrated on low-dimensional optimization problems.

\medskip
To illustrate the theoretical results, we present numerical experiments for one-dimensional Schr\"odinger equations with harmonic, double-well, and hyperbolic cosine potentials. These examples probe regimes of exact representability, dynamical interference, and oscillatory wave packet motion. The experiments confirm the predicted convergence orders and demonstrate how residual minimization accuracy directly influences accuracy and conservation behavior. Although our numerical investigations focus on Schr\"odinger dynamics, the framework is not restricted to conservative
evolution equations. To illustrate this broader applicability, Appendix~\ref{sec:FP_eq} briefly discusses a Fokker--Planck example.

\medskip
More generally, the framework developed here connects variational approximation theory, time discretization of evolution equations, and nonlinear optimization within a single analytical structure. While Gaussian manifolds and time-dependent Schr\"odinger equations serve as a concrete and computationally attractive example, the error analysis applies to general nonlinear parametrizations and evolution equations. 

\subsection*{Structure of the paper}
\Cref{sec:pda} introduces the abstract parametrized approximation framework and the residual minimization formulation. \Cref{sec:residual_fn} derives explicit residuals corresponding to discretize-then and parametrize-then strategies within the family of $\zeta$-methods. \Cref{sec:error} develops the unified error analysis. \Cref{sec:gaussian_formulas} presents closed-form Gaussian integral formulas, quadrature approximations for residual evaluation, and details of the initialization scheme. \Cref{sec:Num_Experiments} illustrates the dynamics of the chosen examples. \Cref{sec:Num_Experiments_II} reports numerical experiments illustrating the theoretical results. Conclusions are given in \Cref{sec:concl}.


\section{Parametrized dynamical approximation} \label{sec:pda}

We work in an ambient Hilbert space $\hil = L^2(\Omega,\C)$ with $\Omega\subseteq\R^d$ and denote the inner product and induced norm by
$\langle\cdot,\cdot\rangle_\hil$ and $\Vert\cdot\Vert_\hil$, respectively. For solving the evolutionary equation \eqref{eq:PDE_original}, we introduce a time grid $t_k = k h$ of uniform step-size $h>0$ and aim at an optimal parameter sequence $(\theta_k)_{k\ge0}\subset\Theta$ from the parameter set $\Theta\subset\C^p$ with $p>1$. We aim at the approximation of the PDE solution, $\psi(t_k,x)\approx u(\theta_k,x)$ for all time steps $k\ge0$.
Given a residual function
\begin{align*}
    r_k: \Theta\times\Omega \to \C, \quad
          (\theta,x) \mapsto r_k(\theta,x),
\end{align*}
parametrized by a `known' parameter $\theta_k$ at time step $t_k$, we define the cost function
\begin{equation} 
    F_k: \Theta\subset\C^p \to \R, \quad
    \theta \mapsto \Vert r_k(\theta,\cdot) \Vert^2_{\hil}.
\end{equation}
This is the function to be minimized in the parameter space $\Theta\subset\C^p$ to determine the next point in the discrete trajectory of approximate solutions. Namely, given an initial condition $\psi_0\in\hil$, we determine the approximation $u(\theta_0,\cdot)\in\M$ and then compute the subsequent values $u(\theta_{k+1},\cdot)\in\M$ iteratively via the optimization sequence
\begin{equation*}
    \theta_{k+1} = \arg \min_{\theta\in\Theta} F_k(\theta),\quad k=0,1,\ldots
\end{equation*}
We will consider and analyze two specific choices for the residual function $r_k(\theta,\cdot)$ later in \Cref{sec:residual_fn}.

\subsection{Abstract discretization process} \label{sec:discretization_process}
Given a choice of step-size $h>0$, the required number of steps $N$ in the interval $[0,T]$ is given by $N=\lceil T/h \rceil$. For the time grid $t_0,\ldots,t_N$, we perform the following process to determine the discrete solutions 
$u(\theta_0,\cdot),\ldots,u(\theta_N,\cdot)$:
\begin{itemize}
    \item[(i)] Given the initial condition $\psi_0\in\hil$, obtain $u_0\in\M$ via 
    $$ \theta_{0} = \arg\min_{\theta\in\Theta} \Vert \psi_0(\cdot) - u(\theta,\cdot) \Vert_\hil $$
    \item[(ii)] Construct the cost function $F_k(\theta)=\Vert r_k(\theta,\cdot)\Vert_\hil^2$ for $k=0$.
    \item[(iii)] Minimize $F_k$ and assign the optimum to $\theta_{k+1}$ for $k=0$.
    \item[(iv)] Repeat steps (ii) and (iii) for $k=1,2,\ldots$ to obtain $\theta_2,\theta_3,\ldots$
    \item[(v)] Compute $u(\theta_k,\cdot)$ at the values $\theta_k$ determined in steps (i)--(iv) to provide the discrete approximations $\psi(t_k,\cdot)\approx u(\theta_k,\cdot)$ for all $k=0,1,\ldots,N$
\end{itemize}

\bigskip
See also \Cref{alg:residual} for a more algorithmic formulation of the process. Our main focus here will be on the comparative analysis of several natural choices of residual optimization, which result from different orderings of time discretization and parameter optimization, as well as a different choice of the time-integration within the class of $\zeta$-methods. 

\bigskip
\begin{remark}
We note that the optimizations in steps (i) and (iii) are non-trivial processes in general, which we treat in 
a black box fashion. 
Any computed parameter value $\theta_{k+1}$ is useful in the present
context, since the resulting approximation error is estimated in
terms of the associated residual norm.
\end{remark}

\begin{algorithm}[ht]
\caption{Residual-based parametric $\zeta$-method}
\label{alg:residual}
\begin{algorithmic}[1]

\Require Initial parameter $\theta_0\in\Theta$, final time $T>0$, step-size $h>0$
\State $N \gets \lceil T/h\rceil$

\For{$k=0,\ldots,N-1$}

    \State Construct the residual function
    $r_k(\theta,\cdot)$
    according to either the discretize-then formulation~\eqref{eq:residual_Disc-Opt}
    or the parametrize-then formulation~\eqref{eq:residual_Opt-Disc}

    \State Define the cost function
    $F_k(\theta)=\|r_k(\theta,\cdot)\|_{\mathcal H}^2$

    \State Compute $\theta_{k+1} = \arg\min_{\theta\in\Theta}F_k(\theta)$

    \State Set $u_{k+1}=u(\theta_{k+1},\cdot)$

\EndFor

\Ensure Approximate trajectory $\{u(\theta_k,\cdot)\}_{k=0}^{N}$

\end{algorithmic}
\end{algorithm}

\subsection{Initialization}\label{sec:initial}
The first step in the entire process is determining
the appropriate parameter $\theta_0\in\Theta$ that approximates the initial condition $\psi_0\in\hil$. To do this, we define the cost function
\begin{equation} \label{eq:cost_Fn0}
    \begin{split}
        F(\theta) &= \Vert \psi_0 - u(\theta,\cdot) \Vert^2_\hil, \\
      &= \langle \psi_0,\psi_0 \rangle_\hil - 2 \Re \langle\psi_0, u(\theta,\cdot)\rangle_\hil + \langle u(\theta,\cdot),u(\theta,\cdot)\rangle_\hil.
    \end{split}
\end{equation}
For example, we might consider a univariate mixture
\[
u(\theta,x) = \sum_{l=1}^L b(\theta^l,x)
\]
with holomorphically parametrized complex Gaussians
\begin{equation} \label{eq:gaussian_basis}
b(\theta^l,x) = \exp(-a^l x^2 + \kappa^l x + \gamma^l),\quad x\in\R,   
\end{equation}

with $a\in \siegel = \{z\in\C: \Re(z) >0\}$, $\kappa\in\C$, $\gamma\in\C$ and total parameter $\theta=(a,\kappa,\gamma)$, see also \Cref{sec:gaussian_formulas}. Then, the gradient of the cost function satisfies
\begin{align*}
    \nabla_{\theta} F(\theta) = - \langle \psi_0,\nabla_\theta u(\theta,\cdot) \rangle_\hil + \langle u(\theta,\cdot),\nabla_\theta u(\theta,\cdot) \rangle_\hil .
\end{align*}
In this specific situation, we employ a heuristic five-step strategy to determine a minimizer 
$\theta_0 = \arg\min_{\theta\in\Theta}F(\theta)$ with $\theta_0=(a_0^l,\kappa_0^l,\gamma_0^l)_{l=1,\ldots,L}$,  
see \Cref{sec:init_Gauss} for further details.

\subsection{Optimization Process} \label{sec:Optimization_process}
At each time step, an optimization problem is solved. 
The parameter $\theta_{k+1}$ is defined as a minimizer of $F_k$, namely,
\begin{equation*}
    \theta_{k+1} = \arg\min_{\theta\in\Theta} F_k(\theta) ,
\end{equation*}
where we recall $F_k(\theta)=\Vert r_k(\theta,\cdot) \Vert^2_{\hil}$.
Since a global minimizer may not exist or may be computationally
inaccessible, in particular with a constraint like that for the width matrix 
$A\in\siegel^d$ of a Gaussian basis function, 
we only expect to obtain a parameter value $\theta_{k+1}$ for which
the residual is sufficiently small in a small neighborhood of the starting point $\theta_k$ for every step $k$. We emphasize that local minimizers with small enough residual will be our target here, since the overall approximation scheme has inherent time discretization, initialization, and quadrature errors.  
For the numerical experiments presented in \Cref{sec:Num_Experiments}, we perform unconstrained optimization in $\C^p$ using the Broyden–Fletcher–Goldfarb–Shanno (BFGS) algorithm. This algorithm searches for the local minimum until one of the following stopping criteria is met: $\Vert \nabla F_k(\theta) \Vert_{\infty} < \varepsilon_1$, $|F_k(\theta^{(n)})-F_k(\theta^{(n-1)})|<\varepsilon_2$ or $\Vert \theta^{(n)}-\theta^{(n-1)}\Vert_2 < \varepsilon_3$ for $n=1,2,\ldots$, given predefined accuracies $\varepsilon_1,\varepsilon_2,\varepsilon_3>0$. If none of these criteria is met, we terminate the computations if a certain maximum number of steps $n_{\max}$ is reached.
This method is available in various optimization packages, and the solver's efficiency is significantly improved (especially in higher dimensions) if the explicit form of the gradient $\nabla_\theta F_k(\theta)$ is provided. We discuss such an explicit form in \Cref{sec:sum}.

The numerical experiments shown in \Cref{sec:Num_Experiments,sec:Num_Experiments_II} were carried out using unconstrained optimization. However, since these examples deal with Gaussian bases, one may impose the positivity of the real part of the width parameter $a$ of each Gaussian, i.e., we choose a small $\epsilon$ and impose the linear constraint $a>\epsilon$. In our experiments, it was not necessary.


\section{Residual and Cost Function} \label{sec:residual_fn}

We consider a numerical approximation to the PDE \eqref{eq:PDE_original} via a $\theta$-parametrization and an iterative process in the parameter space $\Theta\subset \C^p$, where $p$ is the complex dimension. The set of time-discrete approximations  $u(\theta_0,\cdot),u(\theta_1,\cdot),\ldots$ are constructed via the iterative process 
\begin{equation} \label{eq:iteration}
    \theta_{k+1} = \arg \min_{\theta\in\Theta} \Vert r_k(\theta,\cdot) \Vert_{\hil}, \quad k=0,1,\ldots
\end{equation}
for various forms of the residual function $r_k(\theta,\cdot)$, whose explicit form will depend on the chosen numerical schemes.

\subsection{Discretization then parametrization}
The discretize-then approach falls into the class of Rothe methods for evolution equations, see for example the classic monograph \cite{Kacur1985}. In the context of parametric approximation for Schr\"odinger evolution, it has been proposed in \cite{kvaal2023,Schrader2024,schrader2026}; see also \cite{LN} for more general stiff evolution equations. Here, following \cite{ZCVP}, we employ the usual $\zeta$-scheme for time-discretization, which generalizes the Euler methods and the trapezoidal rule. 
Namely, given a continuous function $\varphi:[0,T]\to \R$, the integral over
small intervals $[t_a,t_b]$ with length $h=t_b-t_a$ are approximated via the weighted rule $\int_{t_a}^{t_b} \varphi(t) \dd t \approx h ( \zeta \varphi(t_a) + \hat{\zeta} \varphi(t_b)) $ with $\zeta\in [0,1]$ and $\hat{\zeta}=1-\zeta$.
The construction of the corresponding residual proceeds in two steps.\\
\textbf{Step (i)}. Integrate both sides of the PDE \eqref{eq:PDE_original} with respect to $t$ on small intervals of the form $[t_{k+1},t_k]$ with length $h=t_{k+1}-t_k$. Namely,
\begin{align*}
    \psi(t_{k+1}) - \psi(t_k) &= \int_{t_k}^{t_{k+1}} f(t,\cdot,\psi) \dd t, \\
    \psi_{k+1} - \psi_k &\approx h \left[ \zeta f(t_k,\cdot,\psi_k) + \hat{\zeta} f(t_{k+1},\cdot,\psi_{k+1}) \right] .
\end{align*}
This leads to an abstract optimization problem in the Hilbert space $\hil$,
\[
    \psi_{k+1} = \arg\min_{\psi\in\hil} \Vert \psi - \psi_k - h [\zeta f(t_k,\cdot,\psi_k) + \hat{\zeta} f(t_{k+1},\cdot,\psi) ] \Vert_{\hil} .
\]
\textbf{Step (ii)}. Consider the $\theta$-parametrization $\psi(t,\cdot)\approx u(\theta(t),\cdot)$ for all $t\in [0,T]$ and replace these values in the expression shown above. 
This leads to an optimization problem in the parameter space
$\Theta\subset\mathbb C^p$, 
and we define the \textit{Discretization then Parametrization} approach as 
the iterative method 
\[
\theta_{k+1} = \arg \min_{\theta\in\Theta} \Vert r_k(\theta,\cdot) \Vert_{\hil}, \quad k=0,1,\ldots
\]
with the residual function
\begin{equation} \label{eq:residual_Disc-Opt}
    r_k(\theta,\cdot) = u(\theta,\cdot) - u(\theta_k,\cdot) - h \left[ \zeta f(t_k,\cdot,u(\theta_k,\cdot)) + \hat{\zeta}f(t,\cdot,u(\theta,\cdot)) \right],
\end{equation}
see also \cite[\S4.1]{ZCVP}. Given the parameter $\theta_k\in\Theta$ at time $t_k$, we have in mind that an optimized parameter $\theta_{k+1}$ provides an approximation $u(\theta_{k+1},\cdot)\approx\psi(t_{k+1},\cdot)$. The value $\zeta=1$ (resp. $\zeta=0$) corresponds to the forward (resp. backward) Euler method.

\subsection{Parametrization then discretization}
The second approach parametrizes first and then discretizes in time. 
The formal construction follows three steps. First, we consider the general Dirac-Frenkel approximation at a point $u\in\M$. Second, we parametrize $u$ with $\theta\in\Theta$. Finally, we discretize in time considering a $\zeta$-scheme. \\
\textbf{Step (i)}. Let $\M$ be the manifold approximation to the space of solutions of the PDE \eqref{eq:PDE_original}. Then, following the Dirac-Frenkel principle, we project $f$ to the tangent space at $u\in\M$,
\[
    \partial_t u = P_{T_u \M} f(t,\cdot,u),
\]
where the projection is determined as the solution of the linear least squares problem
\[
    P_{T_u \mathcal{M}} f(t,\cdot,u) = \arg\min_{w\in T_u\M} \Vert f(t,\cdot,u) - w \Vert_{\hil} .
\]
\textbf{Step (ii)}. Consider the parametrized manifold $\M=\{ u(\theta,\cdot) \ : \ \theta\in\Theta \}$. This implies that $\partial_t u = \nabla_{\theta} u \cdot \dot{\theta}$ and that the tangent space at $u\in\M$ is of the form $T_u \M = \{ \nabla_{\theta} u \cdot \eta \ : \ \eta\in\C^p \}$, leading to
\begin{equation} \label{eq:DF}
    \dot{\theta} = \arg \min_{\eta\in\C^p} \Vert \tilde{r}(\theta,\eta,\cdot) \Vert_{\hil} ,
\end{equation}
where $\tilde{r}(\theta,\eta,\cdot)= \nabla_{\theta} u \cdot \eta - f(t,\cdot,u) $ is the Dirac--Frenkel residual function. \\
\textbf{Step (iii)}. Consider time discretization via the approximation $\dot{\theta}|_{t_k} \approx (\theta_{k+1}-\theta_k)/h$. Thus, the tangent space at $u_k$ is constrained to $\eta=(\theta-\theta_k)/h$, i.e., $T_{u_k} \M = \{ \nabla_{\theta} u_k \cdot (\theta-\theta_k)/h \ : \ \theta\in\Theta \}$, leading to
\[
    \theta_{k+1} = \arg\min_{\theta \in \Theta} \Vert \nabla_{\theta_k} u_k \cdot (\theta-\theta_k) - h f(t,\cdot,u_k) \Vert_{\hil} .
\]
Similarly, the discretization $\dot{\theta}|_{t_{k}} \approx (\theta_k-\theta_{k-1})/h$ (after a relabelling $k\leftarrow k+1$) leads to a search inside of the moving tangential space $V_u = \{ \nabla_{\theta} u \cdot (\theta-\theta_k)/h \ : \ \theta\in\Theta \}$, where $u=u(\theta,\cdot)$. Thus,
\[
    \theta_{k+1} = \arg\min_{\theta \in \Theta} \Vert \nabla_{\theta} u \cdot (\theta-\theta_k) - h f(t,\cdot,u) \Vert_{\hil} .
\]
The first expression corresponds to the forward Euler method,
whereas the second corresponds to the backward Euler method.
Consider an extension to $\zeta\in [0,1]$ such that $\zeta=1$ ($\zeta=0$) will reduce to forward (backward) Euler. 
This condition is satisfied by the residual function
\begin{equation} \label{eq:residual_Opt-Disc}
    r_k(\theta,\cdot) = \left[ \zeta \nabla_{\theta_{k}} u(\theta_k,\cdot) + \hat{\zeta} \nabla_{\theta} u(\theta,\cdot) \right] \cdot (\theta-\theta_k) - h \left[ \zeta f(t,\cdot,u(\theta_k,\cdot)) + \hat{\zeta} f(t,\cdot,u(\theta,\cdot)) \right] .
\end{equation}

Note that in comparison to the Dirac–Frenkel residual $\tilde{r}(\theta,\dot{\theta},\cdot)$, there is a multiplicative factor $h$, namely, $r_k(\theta_{k+1},\cdot) \approx h \tilde{r}(\theta|_{t_k},\dot{\theta} |_{t_k}, \cdot)$. More precisely,
\[ 
\lim_{h\to 0} r_k(\theta_{k+1},\cdot)/h = \tilde{r}(\theta|_{t_k},\dot{\theta} |_{t_k}, \cdot) .
\]

\begin{remark}
The main difference of the two residuals \eqref{eq:residual_Disc-Opt} and \eqref{eq:residual_Opt-Disc} is the weighted linearization $[\zeta \nabla_{\theta} u(\theta_k,\cdot) + \hat{\zeta}  \nabla_{\theta} u(\theta,\cdot)] (\theta - \theta_k)$ of the difference $u(\theta,\cdot) - u(\theta_k,\cdot)$.
\end{remark}

\subsubsection{Euler methods} 
For the two extreme cases $\zeta\in\{0,1\}$, the residual \eqref{eq:residual_Opt-Disc} has been considered before. 
For $\zeta=1$, it takes the form
\begin{equation*} 
    r_k(\theta_{k+1},\cdot) = \nabla_\theta u(\theta_k,\cdot) (\theta_{k+1} - \theta_k) - h f(t_k,\cdot,u(\theta_k,\cdot)) .
\end{equation*}
The corresponding one-step method $\theta_{k+1} = \arg\min_{\theta\in\Theta}\|r_k(\theta,\cdot)\|_\hil$ is the forward Euler method for the Dirac--Frenkel principle \eqref{eq:DF}, that is, $\theta_{k+1} = \theta_k + h\dot\theta_k$ with 
\[
\dot\theta_k = \arg \min_{\eta\in\C^p} \Vert \nabla_\theta u(\theta_k,\cdot) \eta  - f(t_k,\cdot,u(\theta_k,\cdot)) \Vert_\hil.
\]
For each time step, it requires to solve a linear least squares problem. On the other hand, for $\zeta=0$, we obtain 
\begin{equation*} 
    r_k(\theta_{k+1},\cdot) = \nabla_\theta u(\theta_{k+1},\cdot) (\theta_{k+1} - \theta_k) - h f(t_{k+1},\cdot,u(\theta_{k+1},\cdot)),
\end{equation*}
see also \cite[\S3.1.4]{ZCVP} and for a regularized implicit Euler discretization \cite[\S4.6]{FLL}.
Here, in the fully implicit approach, a nonlinear optimization is performed for each time-step. Note, that the implicit Euler method applied to the normal equations of the Dirac--Frenkel principle \eqref{eq:DF},
\[
\langle \nabla_\theta u(\theta,\cdot),\nabla_\theta u(\theta,\cdot)\rangle_\hil \ \dot\theta = 
\langle \nabla_\theta u(\theta,\cdot),f(t,\cdot,u(\theta,\cdot))\rangle_\hil, 
\]
results in the related, but different nonlinear system 
\[
\theta_{k+1} = \theta_k + h \langle \nabla_\theta u(\theta_{k+1},\cdot),\nabla_\theta u(\theta_{k+1},\cdot)\rangle_\hil^{-1}\langle \nabla_\theta u(\theta_{k+1},\cdot),f(t_{k+1},\cdot,u(\theta_{k+1},\cdot))\rangle_\hil.
\]

\begin{table}[th]
\centering
\caption{Comparison of the two considered residual-based formulations.}
\label{tab:comparison}
\renewcommand{\arraystretch}{1.15}
\begin{tabular}{p{3.2cm}p{5.2cm}p{5.2cm}}
\toprule
 & \textbf{Discretize--then--Parametrize}
 & \textbf{Parametrize--then--Discretize} \\
\midrule
Starting point &
Evolution equation in $\mathcal H$ &
Dirac--Frenkel principle on $\mathcal M$\\
Order of operations & 
Time discretization followed by parametrization &
Parametrization followed by time discretization \\
Residual structure &
$u(\theta)-u(\theta_k)$ compared to a $\zeta$-step of the PDE &
Linearized increment $\nabla_\theta u\,(\theta-\theta_k)$
compared to a $\zeta$-step of the PDE\\
Requirements &
(one-sided) Lipschitz or dissipativity assumptions &
Same assumptions + singular value bounds for $\nabla_\theta u$\\
Main result &
\Cref{prop:discr-opt} &
\Cref{prop:opt-discr} \\
Error sources &
Time discretization, residual minimization &
Same sources + poor conditioning\\
\bottomrule
\end{tabular}
\end{table}

\subsection{Explicit form of the cost function}\label{sec:sum}
In this subsection and for the numerical experiments in \Cref{sec:Num_Experiments,sec:Num_Experiments_II}, we consider a specific type of parametric approximation $\psi(t,x)\approx u(\theta(t),x)$, namely, 
\begin{equation}\label{eq:sum}
    u(\theta(t),x) = \sum_{l=1}^L b(\theta^l(t),x), \qquad x\in\Omega,
\end{equation}
where $\theta(t)=(\theta^1(t),\ldots,\theta^L(t))$ with $\theta^l(t)\in\Theta$ and $b(\theta^l(t),\cdot)\in\hil = L^2(\Omega,\C)$ are the individual basis functions. 
We assume that the function $f$ defining the PDE \eqref{eq:PDE_original} acts on such an ansatz $u$ as
\begin{equation} \label{eq:f_condition_L}
    f(t,x,u) = \sum_{l=1}^L \phi(\theta^l(t),t,x) b(\theta^l(t),x),
\end{equation}
with a function $\phi$ such that $\phi(\theta^l(t),t,\cdot)b(\theta^l(t),\cdot) \in \hil$, for all $l=1,\ldots,L$ and for all $t\in [0,T]$. 
The following remark illustrates the above assumptions and demonstrates their natural occurrence in a quantum application.

\begin{remark}
A motivating scenario for these assumptions is the case of a Schr\"odinger operator 
$f(t,x,\cdot) = -\I [ -\frac12\Delta_x + V(t,x) ]$ and a parametric approximation $u(\theta,\cdot)$, that is a sum of complex Gaussians
\begin{equation*}
        b(\theta^l,x) = \exp(- x^\trans A^lx + (\kappa^l)^\trans x + \gamma^l ),\quad x\in\R^d,
\end{equation*}
in holomorphic parametrization with 
\[
\theta^l=(A^l,\kappa^l,\gamma^l)\ \text{with}\ A^l\in \siegel^d, \kappa^l\in\C^d, \gamma^l\in\C.
\]
Here, $\siegel^d$ denotes the Siegel half space of degree $d$, that is, the set of $d\times d$ complex symmetric matrices with positive definite real part. Then, the parameter space is $\Theta = \siegel^d \times \C^{d+1} \subset \C^p$ with $p=\frac{1}{2}(d+1)(d+2)$, since $\dim(\siegel^d)=\frac{1}{2}d(d+1)$. In this case, the function $\phi$ of \eqref{eq:f_condition_L} is a sum of multivariate polynomials of degree $\le 2$ plus the potential function $V(t,x)$, 
\[
\phi(\theta^l,t,x) = -\I \left[ {\rm tr} A^l - \frac12 (-2A^l x + \kappa^l)^\trans (-2A^l x + \kappa^l) + V(t,x) \right].
\]
\end{remark}

\subsubsection{Discretize-then}
Assuming the basis form \eqref{eq:sum} and the $f$-condition \eqref{eq:f_condition_L}, the residual function \eqref{eq:residual_Disc-Opt} for the discretize-then approach is the finite sum
\begin{align}\label{eq:residual_sum}
    r_k(\theta,\cdot) = \sum_{l=1}^L r^l_k(\cdot)
\end{align}
with summands $r^l_k(\cdot) = Q_1(\theta^l_k,\theta^l,\cdot) b(\theta^l_k,\cdot) + Q_2(\theta^l_k,\theta^l,\cdot) b(\theta^l,\cdot)$, 
that depend on the basis functions $b(\theta^l_k,\cdot)$ and $b(\theta^l,\cdot)$ as well as the functions
\begin{align*}
    Q_1(\theta^l_k,\theta^l,\cdot) &= -1 - h \zeta \phi(\theta^l_k,\cdot) , \\
    Q_2(\theta^l_k,\theta^l,\cdot) &= 1 - h \hat\zeta \phi(\theta^l,\cdot) .
\end{align*}

\subsubsection{Parametrize-then}
To express the parametrize-then residual \eqref{eq:residual_Opt-Disc} in an analogous form, we need an additional assumption. We have to require the existence of a vector $v:\Omega\to\R^p$ such that 
\begin{equation}\label{eq:grad_b}
\nabla_{\theta} b(\theta,x) = \vec{v}(x) b(\theta,x), 
\end{equation}
where the vector $\vec v(x) = \vec v(\theta,x)$ might depend on $\theta$ as well.
For instance, $\vec{v}(x) = (-x^2,x,1)$ if the basis function $b(\theta,x)$ is the one-dimensional Gaussian \eqref{eq:gaussian_basis}. In higher dimensions $d\ge 1$, $\vec{v}(x)\cdot\theta$ would correspond to the argument of the exponential, i.e., $\vec{v}(x)\cdot\theta=-x^\trans A x + \kappa^\trans \cdot x + \gamma$. Then, we also have \eqref{eq:residual_sum} with summands defined by
\begin{align*}
    Q_1(\theta^l_k,\theta^l,\cdot) &= \zeta \left[ \vec{v}(x)\cdot (\theta^l-\theta^l_k) - h \phi(\theta^l_k,\cdot) \right], \\
    Q_2(\theta^l_k,\theta^l,\cdot) &= \hat{\zeta} \left[ \vec{v}(x)\cdot (\theta^l-\theta^l_k) - h \phi(\theta^l,\cdot) \right].
\end{align*}

\subsubsection{Unified form}
Hence, for a parametrization $u(\theta,\cdot) = \sum_{l=1}^L b(\theta^l,\cdot)$ with properties \eqref{eq:f_condition_L} and potentially \eqref{eq:grad_b}, the cost function for both residual approaches is explicitly given by 
\begin{align*}
    F_k(\theta) = \Vert r_k(\theta,\cdot)\Vert_\hil^2 = \sum_{l',l=1}^L \langle r_k^{l'}(\cdot) , r_k^l(\cdot) \rangle_\hil,
\end{align*}
with 
\begin{align*}
    \langle r_k^{l'}(\cdot) , r_k^l(\cdot) \rangle_\hil = 
   \sum_{\alpha,\beta=1}^2 \langle Q_\alpha(\theta_k^{l'},\theta^{l'},\cdot) b^{l'}_\alpha, 
   Q_\beta(\theta_k^{l},\theta^l,\cdot) b_\beta^{l} \rangle_\hil,
\end{align*}
where $b^l_1 = b(\theta_k^l,\cdot)$ and $b^l_2=b(\theta^l,\cdot)$. These inner products are then computed either analytically (via closed form formulas) or numerically, depending on the choice of basis functions and the functions $\phi$ and $\vec v$. For the components of the gradient 
\begin{equation*}
    \nabla_{\theta} F_k(\theta) = [ \nabla_{\theta^1} F_k(\theta), \ldots, \nabla_{\theta^L} F_k(\theta) ],
\end{equation*}
the above summations shrink, and we have
\begin{align*}
\nabla_{\theta^r} F_k(\theta) = \sum_{l'=1}^L & \left( \nabla_{\theta^r} \langle Q_1(\theta^{l'}_k,\theta^{l'},\cdot) b(\theta^{l'}_k,\cdot), Q_2(\theta^r_k,\theta^r,\cdot) b(\theta^r,\cdot) \rangle_\hil \right. \\
   & + \left. \nabla_{\theta^r} \langle Q_2(\theta^{l'}_k,\theta^{l'},\cdot) b(\theta^{l'},\cdot), Q_2(\theta^{r}_k,\theta^{r},\cdot) b(\theta^r,\cdot) \rangle_\hil \right), \quad {\rm for} \hquad r=1,\ldots,L.
\end{align*}

\subsubsection{Gaussian basis functions}
If a Gaussian basis is used and $\hil=L^2(\R^d,\C)$, then the cost function $F_k(\theta)$ and its gradient $\nabla F_k(\theta)$ can be computed in two different ways, depending on whether the function $\phi$ in \eqref{eq:f_condition_L} is polynomial or not.

If $\phi$ is a polynomial, then the integrals defining $F_k(\theta)$ are computed explicitly via polynomial Gaussian integrals. More precisely, given any polynomial $\phi$, the integral $F_k(\theta)$ and its gradient  $\nabla_\theta F_k(\theta)$ are computed in terms of sums of monomial Gaussian integrals, see \Cref{sec:gaussian_formulas}. An important advantage of Gaussian approximations is that the computation of these integrals in closed form can be generalized to higher dimensions. 
With analytic formulas, the cost of residual evaluation scales quadratically with the number $L$ of basis functions. 
The dominant computational expense is then the nonlinear optimization.

If $\phi$ is not a polynomial, then we compute each summand $ \langle Q_\alpha b^{l'}_\alpha, 
   Q_\beta b_\beta^{l} \rangle_{L^2} $ via quadratures (we omit the $\theta$-dependence for readability), i.e., 
    \begin{align*}
        \langle Q_\alpha b^{l'}_\alpha, 
   Q_\beta b_\beta^{l} \rangle_{L^2} &\approx \sum_{x_j\in {\rm Grid}} \overline{Q_\alpha(x_j) b^{l'}_\alpha (x_j)} Q_\beta (x_j) b_\beta^{l} (x_j) w_j, \\
        \nabla_{\theta^{l}} \langle Q_\alpha b^{l'}_\alpha, 
   Q_\beta b_\beta^{l} \rangle_{L^2} &\approx \sum_{x_j\in {\rm Grid}} \overline{Q_\alpha(x_j) b^{l'}_\alpha (x_j)} \nabla_{\theta^{l}} \left[ Q_\beta (x_j) b_\beta^{l} (x_j) \right] w_j,
    \end{align*}
    for weights $w_j\ge0$ and grid points $x_j\in\R^d$. We note that for the gradient formula a holomorphic parametrization of the Gaussians has to be used. For instance, we may consider Gauss--Hermite quadrature if $d=1$, see \Cref{sec:GH}. In higher dimension $d\ge 1$, tensor product quadrature comes with a computational complexity that increases exponentially with the dimension $d$; see also \cite{PC} for a sparse Gaussian method. 

\section{Unified Error Analysis}\label{sec:error}

For the error analysis, we return to the setting of a general Hilbert space $\hil$. For readability, we omit the sub-indices and simply write $\langle\cdot,\cdot\rangle$ for the inner product and $\Vert \cdot \Vert$ for the induced norm. 
For our analysis, the knowledge on whether $\psi(t,\cdot)$ is approximated by one or several basis functions or by which type of basis functions is \emph{not} needed.  For the general time marching scheme
\begin{align}\label{eq:theta_k+1}
\theta_{k+1} &= \arg \min_{\theta\in\Theta} \Vert r_k(\theta,\cdot) \Vert, \\\nonumber
u_{k+1} &= u(\theta_{k+1},\cdot),\quad k=0,1,\ldots,
\end{align}
we consider the two types of residual functions in the form 
\begin{equation}\label{eq:residual}
    r_k(\theta,\cdot) = \left\{
    \begin{array}{l}
        u(\theta,\cdot) - u(\theta_k,\cdot) - h \left[ \zeta f(t_k,\cdot,u(\theta_k,\cdot)) + \hat{\zeta}f(t,\cdot,u(\theta,\cdot)) \right],\\*[1ex]
        \left[ \zeta \nabla_{\theta} u(\theta_k,\cdot) + \hat{\zeta}  \nabla_{\theta} u(\theta,\cdot) \right] (\theta - \theta_k) - h \left[ \zeta f(t_k,\cdot,u(\theta_k,\cdot)) + \hat{\zeta} f(t,\cdot,u(\theta,\cdot)) \right],
    \end{array}\right. 
\end{equation}
with parameters $\theta,\theta_k\in\Theta$ and $\zeta\in[0,1]$ and $\hat{\zeta}=1-\zeta$. The two residuals -- one of them gradient-free, the other one containing the gradient of $u$ -- correspond to the discretize then parametrize approach~\eqref{eq:residual_Disc-Opt} and the opposite one \eqref{eq:residual_Opt-Disc}, respectively. The residual $r_k(\theta,\cdot)$ enters the error estimates in \Cref{prop:discr-opt} and \Cref{prop:opt-discr} via the maximal norm value 
\[
\rho = \max\left\{\Vert r_k(\theta_{k+1},\cdot) \Vert: 0\le t_{k+1}\le T\right\}.
\]
The second main error contribution is related to the time-regularity of the solution $\psi(t,\cdot)$ and the choice of $\zeta$. We will use constants $c_1,c_2>0$ defined by 
\begin{equation}\label{def:const}
\begin{array}{ll}
c_1 = \left( |\tfrac12-\zeta| + \tfrac14\right) \max_{t\in[0,T]}\|\psi''(t,\cdot)\|, & \text{if}\ \zeta\in[0,1]\ \text{and}\ \psi\in C^2([0,T],\mathcal H),\\*[2ex]
c_2 = \frac{1}{12} \max_{t\in[0,T]}\|\psi'''(t,\cdot)\|, &
\text{if}\ \zeta = \frac12\ \text{and}\ \psi\in C^3([0,T],\mathcal H).
\end{array}
\end{equation}
Given $\rho$ and $c_1,c_2$, we will be able to bound the error of the discretize-then approximation schemes, see 
\Cref{sec:discr-opt}. For the parametrize-then approach, see \Cref{sec:opt-discr}, we will require additional lower bounds on the smallest singular value of the parametrizing gradient.

\subsection{Error analysis for the discretization-then approach}\label{sec:discr-opt}

The error analysis here extends the results of \cite[Proposition 5 \& 6]{ZCVP}, that consider the explicit and implicit Euler method for Lipschitz continuous right-hand side. \cite{LN} also uses the discretize-then setting and performs the error analysis for regularized 
Radau IIA and Gau\ss\ implicit Runge--Kutta methods. Here, we analyze the full family of $\zeta$-methods, $\zeta\in[0,1]$, and show the expected gain in accuracy for $\zeta=\frac12$ provided that the original dynamics are regular enough. Also in the one-sided Lipschitz and the dissipative regime, it is the implicit Euler method and the ones with $\zeta\le\frac12$ that provide the expected estimates. 
A common feature of currently available error estimates for
parametric approximation methods is the presence of residual terms.

\begin{theorem}[Discretize-then]\label{prop:discr-opt}
Consider either one of the following three cases:
\begin{description}
    \item[(Lip)] If there exists a constant $\lambda> 0$ such that for all $t\in\R$ and $u,v\in\hil$
    \begin{align*}
        \Vert f(t,\cdot,u) - f(t,\cdot,v) \Vert &\leq \lambda \Vert u-v \Vert, 
    \end{align*}
    then choose $\zeta\in[0,1]$ and $s\in\{1,2\}$. Choose $C>1$ and consider a sufficiently small $h>0$ satisfying
$\lambda\hat\zeta h<\frac{1}{C}<1$. Set $C_0 = e^{C\lambda T}$ and $M= C_0/\lambda$. Define $E_0 = C_0 \Vert\psi(t_0,\cdot)-u(\theta_0,\cdot)\Vert$.
    \item[(os-Lip)] If there exists a constant $\ell\in\R$ such that for all $t\in \R$ and $u,v\in\hil$
    \begin{align*}
        \Re \langle f(t,\cdot,u)-f(t,\cdot,v) , u-v \rangle & \leq \ell \Vert u-v \Vert^2,
    \end{align*}
    then choose $\zeta=0$ (implicit Euler) and $s=1$. Consider a sufficiently small $h>0$ satisfying the (possible) restriction $\ell h\le \frac12$. Set 
    \[
(C_0,M) = \left\{ 
\begin{array}{ll}
(1,T), &  \text{if}\quad \ell=0,\\*[1ex]
(1,1/|\ell|), & \text{if}\quad \ell<0,\\*[1ex]
e^{2\ell T}(1,1/\ell), & \text{if}\quad \ell>0. 
\end{array}
\right.
    \]    
    Define $E_0 = C_0 \Vert\psi(t_0,\cdot)-u(\theta_0,\cdot)\Vert$. 
    \item[(diss)] If the one-sided Lipschitz condition (os-Lip) holds with $\ell=0$, then choose $\zeta\le\frac12$ and $s\in\{1,2\}$. Set $M=T$ and define 
    \[
    E_0^2 = \Vert \psi(t_0,\cdot)-u(\theta_0,\cdot) \Vert^2 + \zeta^2 h^2 \Vert f(t_0,\cdot,\psi(t_0,\cdot)) - f(t_0,\cdot,u(\theta_0,\cdot))\Vert^2.
    \]
\end{description}

\noindent
Then, for either one of the assumed scenarios (Lip), (os-Lip), or (diss), the error of the discretize-then-parametrize $\zeta$-method satisfies
\[
\boxed{
\Vert \psi(t_k,\cdot)-u(\theta_k,\cdot) \Vert \leq  E_0 + M\left(\frac{\rho}{h} + c_s h^s\right)}
\]
for all $t_{k+1} = (k+1)h\le T$, where the constants $c_1,c_2>0$ are defined in \eqref{def:const}.
\end{theorem}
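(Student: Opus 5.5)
We compare the exact solution with the numerical trajectory through a one-step error recursion. Write $e_k = \psi(t_k,\cdot) - u_k$ with $u_k = u(\theta_k,\cdot)$, $\psi_k=\psi(t_k,\cdot)$, and abbreviate $f_k(v)=f(t_k,\cdot,v)$. The exact solution satisfies the $\zeta$-step up to a local defect,
\[
\psi_{k+1}-\psi_k - h\bigl[\zeta f_k(\psi_k)+\hat\zeta f_{k+1}(\psi_{k+1})\bigr] = \delta_k ,
\]
while by definition of the discretize-then residual \eqref{eq:residual_Disc-Opt} the numerical solution satisfies
\[
u_{k+1}-u_k - h\bigl[\zeta f_k(u_k)+\hat\zeta f_{k+1}(u_{k+1})\bigr] = r_k(\theta_{k+1},\cdot),
\]
with $\|r_k(\theta_{k+1},\cdot)\|\le\rho$. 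Subtracting gives the error equation
\[
e_{k+1} - h\hat\zeta\bigl[f_{k+1}(\psi_{k+1})-f_{k+1}(u_{k+1})\bigr] = e_k + h\zeta\bigl[f_k(\psi_k)-f_k(u_k)\bigr] + \delta_k - r_k .
\]

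\textbf{Step 1: quadrature defect.} Writing $\delta_k = \int_{t_k}^{t_{k+1}}\psi'\,\dd t - h(\zeta\psi'(t_k)+\hat\zeta\psi'(t_{k+1}))$, a Taylor/Peano kernel expansion around the midpoint gives $\|\delta_k\|\le c_1 h^2$ for general $\zeta$. This uses the $|\tfrac12-\zeta|$ term from the first-order mismatch plus $\tfrac14$ from the second-order remainder. For $\zeta=\tfrac12$ the trapezoidal error gives $\|\delta_k\|\le c_2 h^3$. Hence $\|\delta_k\|\le c_s h^{s+1}$.

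\textbf{Step 2: stability, case by case.}

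\emph{(Lip).} Take norms in the error equation:
\[
(1-\lambda\hat\zeta h)\|e_{k+1}\|\le(1+\lambda\zeta h)\|e_k\|+\rho+c_sh^{s+1}.
\]
Using $\lambda\hat\zeta h<1/C$, the amplification factor is bounded by $1+C\lambda h\le e^{C\lambda h}$, which requires a short elementary inequality, possibly adjusting $C$. The discrete Gronwall lemma then gives
\[
\|e_k\|\le e^{C\lambda T}\|e_0\|+\frac{e^{C\lambda T}}{C\lambda h}\bigl(\rho+c_sh^{s+1}\bigr)\cdot O(1),
\]
which yields $E_0+M(\rho/h+c_sh^s)$ with $M=C_0/\lambda$.

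\emph{(os-Lip), $\zeta=0$.} Take the real inner product of the error equation with $e_{k+1}$ and use one-sided Lipschitz:
\[
(1-\ell h)\|e_{k+1}\|^2\le\|e_{k+1}\|\bigl(\|e_k\|+\rho+c_1h^2\bigr).
\]
So $\|e_{k+1}\|\le(1-\ell h)^{-1}(\|e_k\|+\tau)$ with $\tau=\rho+c_1h^2$. Summing the resulting geometric series splits into cases. For $\ell=0$ we get the factor $k\tau\le(T/h)\tau$. For $\ell<0$ the series is bounded by $\tau/(|\ell|h)$. For $\ell>0$ we use $(1-\ell h)^{-1}\le e^{2\ell h}$ when $\ell h\le\tfrac12$, giving the factor $e^{2\ell T}/(\ell h)$.

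\emph{(diss).} This is the main obstacle, since for $\zeta>0$ the explicit part is not contractive. Set
\[
a_k=e_k-h\hat\zeta\,\Delta_k,\qquad b_k=e_k+h\zeta\,\Delta_k,\qquad \Delta_k=f_k(\psi_k)-f_k(u_k).
\]
The plan is to use a modified norm (energy) $\|e_k\|^2 + h^2\zeta^2\|\Delta_k\|^2$ at the same time level, mirroring the definition of $E_0$. Expanding $\|a_{k+1}\|^2$ and $\|b_k\|^2$ and using $\Re\langle\Delta,e\rangle\le0$ together with $\hat\zeta^2\ge\zeta^2$ (exactly where $\zeta\le\tfrac12$ enters), we aim to show
\[
\|e_{k+1}\|^2+\zeta^2h^2\|\Delta_{k+1}\|^2\le\|a_{k+1}\|^2 \quad\text{and}\quad \|b_k\|^2\le\|e_k\|^2+\zeta^2h^2\|\Delta_k\|^2 .
\]
Then $\sqrt{\text{energy}}$ increases by at most $\tau$ per step, and summing over $k\le T/h$ steps gives the bound with $M=T$.
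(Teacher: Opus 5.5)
Your proposal is correct and follows the paper's proof essentially step for step: the same subtracted error recursion with the defect bounded by Lemma~\ref{lem:local_error}, the same norm/discrete-Gronwall argument for (Lip), the same inner product with $e_{k+1}$ and three-case geometric sum (including $1-\ell h\ge e^{-2\ell h}$ for $\ell h\le\frac12$) for (os-Lip), and the same energy $\|e_k\|^2+\zeta^2h^2\|\Delta_k\|^2$ with $\hat\zeta^2\ge\zeta^2$ for (diss). The one point to make precise is your ``$O(1)$'' in (Lip): it is the prefactor $(1-\lambda\hat\zeta h)^{-1}$, which must be bounded by $C$ so that it cancels the $1/(C\lambda h)$ and yields exactly $M=C_0/\lambda$. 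The paper asserts this bound directly from $\lambda\hat\zeta h<1/C$, but it actually requires $\lambda\hat\zeta h\le 1-1/C$ (automatic when $C\ge 2$), so your caveat about adjusting $C$ is well placed.
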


\begin{proof}
    For notational simplicity, we suppress the time and space dependence of the function $f$, denote $r_k = r(\theta_{k+1},\cdot)$ and $u_k = u(\theta_k,\cdot)$. 
    We subtract the relations for the local error $e_k$ of the standard $\zeta$-method and the one for the residual $r_k$ from each other,
    \begin{align}\label{eq:local_error}
    \psi(t_{k+1}) - \psi(t_k) &= h\left(\zeta f(\psi(t_k)) +\hat\zeta f(\psi(t_{k+1}))\right) + e_k,\\ \nonumber
    u_{k+1} - u_k &= h\left(\zeta f(u_k) +\hat\zeta f(u_{k+1})\right) + r_k,
    \end{align}
    and obtain that the global error 
    \[
    \varepsilon_{k+1} = \psi(t_{k+1})-u_{k+1}
    \]
    satisfies the error recursion
    \begin{equation}\label{eq:error}
    \varepsilon_{k+1} = \varepsilon_{k} +\zeta h\left( f(\psi(t_k))-f(u_k)\right) + \hat\zeta h\left( f(\psi(t_{k+1}))-f(u_{k+1})\right) + e_k - r_k.
    \end{equation}

    \paragraph{(Lip)} We start with the Lipschitz continuous case. By the triangle inequality and the Lipschitz bound on $f$, we deduce from \eqref{eq:error} that
    \[
    \|\varepsilon_{k+1}\| \le (1+\lambda \zeta h) \|\varepsilon_{k}\| + \lambda \hat\zeta h\|\varepsilon_{k+1}\| + \|e_k\| + \|r_k\|,
    \]
    and, due to the step-size restriction $h\lambda\hat\zeta<1$, 
    \[
    \|\varepsilon_{k+1}\| \le \frac{1+\lambda \zeta h}{1-\lambda \hat\zeta h}\ \|\varepsilon_{k}\| + 
        \frac{1}{1-\lambda \hat\zeta h} \left( \|e_k\| + \|r_k\|\right) .
    \]
    Iterating this estimate, we obtain
\begin{align*}
    \Vert \varepsilon_k \Vert \leq & \left( \frac{1+\lambda\zeta h}{1-\lambda\hat{\zeta} h} \right)^k \Vert \varepsilon_0 \Vert + 
    \frac{1}{1-\lambda\hat{\zeta}h } \sum_{j=0}^{k-1} \left( \frac{1+\lambda\zeta h}{1-\lambda\hat{\zeta} h} \right)^{j} \left( \Vert e_{k-1-j} \Vert + \Vert r_{k-1-j}\Vert \right).
\end{align*}
Moreover, again due to the step-size restriction, we have $\frac{1}{1-\lambda \hat{\zeta} h} < C$. Thus,
\begin{equation*}
    \frac{1+\lambda\zeta h}{1-\lambda\hat{\zeta} h} = 1 + \frac{\lambda h }{1-\lambda\hat{\zeta} h} \leq 1 + C\lambda h \le e^{C \lambda h},
\end{equation*}
and, using the finite sum formula, we obtain
\begin{align*}
    \sum_{j=0}^{k-1} \left( 1 + C \lambda h \right)^j = \frac{(1 + C \lambda h)^k -1 }{C\lambda h} \leq \frac{1}{C\lambda h} e^{C\lambda h k}.
\end{align*}
Together with the local error estimate of Lemma \ref{lem:local_error}, this proves the claimed error bound.

\paragraph{(os-Lip)}
We use the error recursion \eqref{eq:error} for the implicit Euler method, $\zeta=0$, $\hat\zeta=1$, that is,
\begin{equation*}
    \varepsilon_{k+1} = \varepsilon_k + h \left( f(\psi(t_{k+1})) - f(u_{k+1}) \right) + e_k - r_k.
\end{equation*}
We compute the real part of the inner product with $\varepsilon_{k+1}$ on both sides of the equation,
\begin{align*}
    \Vert \varepsilon_{k+1}\Vert^2 = & \Re \langle\varepsilon_k,\varepsilon_{k+1}\rangle + \Re \langle e_k,\varepsilon_{k+1}\rangle - \Re \langle r_k,\varepsilon_{k+1}\rangle \\
    &+ h \Re\langle f(\psi(t_{k+1})) - f(u_{k+1}) ,\psi(t_{k+1}) - u_{k+1}\rangle.
\end{align*}
Then, we use the one-sided Lipschitz property and the Cauchy-Schwarz inequality to obtain
\begin{align*}
    \Vert \varepsilon_{k+1} \Vert \leq \Vert\varepsilon_k\Vert + \Vert e_k\Vert + \Vert r_k \Vert + \ell h \Vert \varepsilon_{k+1} \Vert.
\end{align*}
Due to the (possible) step-size restriction, we have $1-\ell h >0$ and  
\begin{align*}
    \Vert \varepsilon_{k+1}\Vert \leq \frac{1}{1-\ell h} \left( \Vert\varepsilon_k\Vert + \Vert e_k\Vert + \Vert r_k \Vert \right).
\end{align*}
Iterating the argument, we arrive at
\begin{equation*}
    \Vert \varepsilon_k \Vert \leq \left(\frac{1}{1-\ell h}\right)^k \Vert \varepsilon_0 \Vert + \frac{1}{1-\ell h}\sum^{k-1}_{j=0} \left(\frac{1}{1-\ell h}\right)^{j} 
    \left( \Vert e_{k-j-1}\Vert + \Vert r_{k-j-1}\Vert \right) .
\end{equation*}
For the dissipative case $\ell=0$, this implies
\[
\Vert \varepsilon_k \Vert \leq  \Vert \varepsilon_0 \Vert + k\left( \rho_k + c_1 h^2\right)\le \Vert \varepsilon_0 \Vert + T\left( \frac{\rho_k}{h} + c_1 h\right).
\]
For $\ell\neq0$, we calculate the geometric sum as
\[
\gamma := \frac{1}{1-\ell h}\sum^{k-1}_{j=0} \left(\frac{1}{1-\ell h}\right)^{j} = \frac{1}{\ell h}\left(\left(\frac{1}{1-\ell h}\right)^k-1\right).
\]
In the non-expansive case $\ell<0$, we have $(1-\ell h)^{-k}<1$ and $\gamma\le \frac{1}{|\ell| h}$. This implies $C_0 = 1$ and $M = 
\frac{1}{|\ell|}$. For the expansive case $\ell>0$, the step-size restriction allows for  
$1-\ell h \ge e^{-2\ell h}$ and thus for $C_0=e^{2\ell T}$, $M = e^{2\ell T}/\ell$. 

\paragraph{(diss)}
    We rewrite the error recursion \eqref{eq:error} of as
    \[
    \varepsilon_{k+1} - \hat\zeta h\left(f(\psi(t_{k+1}))-f(u_{k+1})\right) = 
     \varepsilon_{k} + \zeta h\left(f(\psi(t_{k}))-f(u_{k})\right) + e_k - r_k
    \]
    and use the triangle inequality to obtain
    \[
    \Vert\varepsilon_{k+1} - \hat\zeta h\left(f(\psi(t_{k+1}))-f(u_{k+1})\right)\Vert \le 
     \Vert\varepsilon_{k} + \zeta h\left(f(\psi(t_{k}))-f(u_{k})\right)\Vert + \Vert e_k\Vert + \Vert r_k\Vert.
    \]
   We introduce an energy
   \[
   E_k := \sqrt{\Vert\varepsilon_k\Vert^2 + \zeta^2 h^2 \Vert f(\psi(t_{k}))-f(u_{k}) \Vert^2}
   \]
   and use the one-sided Lipschitz condition to estimate
    \begin{align*}
    &\Vert\varepsilon_{k} + \zeta h\left(f(\psi(t_{k}))-f(u_{k})\right)\Vert^2 
    \le  E_k^2,\\
    &\Vert\varepsilon_{k+1} - \hat\zeta h\left(f(\psi(t_{k+1}))-f(u_{k+1})\right)\Vert^2  
     \ge E_{k+1}^2,
    \end{align*}
where the bound from below is due to $\hat\zeta^2 = \zeta^2 + (1-2\zeta)\ge\zeta^2$ for $\zeta\le \frac12$. Therefore, we can relate the energies as
\[
E_{k+1} \le E_k + \Vert e_k\Vert + \Vert r_k\Vert.
\]
Iterating the estimate, we obtain
\[
E_k \le E_0 + \sum_{j=0}^{k-1} \left( \Vert e_j\Vert + \Vert r_j\Vert\right)
\]
and $\Vert\varepsilon_k\Vert \le E_0 + k\left( \rho_k + c_s h^{s+1}\right)$, using that $\Vert\varepsilon_k\Vert \le E_k$.
\end{proof}

\bigskip
\begin{remark}
    \Cref{prop:discr-opt} provides a unified convergence analysis for residual-based
discretize-then-parametrize schemes within the full family of
$\zeta$-methods. The estimate separates the residual contribution $\rho/h$ and the classical time
discretization contribution $c_s h^s$ as the sources of error and 
makes explicit how optimization and time discretization jointly determine the overall accuracy.
\end{remark}

\subsection{Error analysis for the parametrize-then approach}\label{sec:opt-discr}

Error and stability analysis for the parametrize-then approach has been carried out in \cite[\S3]{ZCVP} for time-continuous dynamics. The analysis of fully discretized, regularized Runge--Kutta schemes in \cite{FLL} comprises the explicit and implicit Euler method. There, the regularization induces a step-size restriction that substitutes the upper bound \eqref{eq:bound_f} and the lower bound on the parametrizing gradient's singular values \eqref{eq:bound_sv}. These two bounds are needed here to control the distance of consecutive parameter values. They are the key ingredients for the new error constant $c_s'$ that distinguishes the error bound of \Cref{prop:opt-discr} from the Lipschitz case in \Cref{prop:discr-opt}.

\begin{theorem}[Parametrize-then]\label{prop:opt-discr}
Choose $\zeta\in[0,1]$ and $s\in\{1,2\}$. 
Assume that there exists constants $\lambda, m, \sigma_0>0$ such that for all $t\in\R$, $u,v\in\hil$, and $\theta,\eta\in\C^p$,
    \begin{align}\nonumber
        &\Vert f(t,\cdot,u) - f(t,\cdot,v) \Vert \leq \lambda \Vert u-v \Vert,\\\label{eq:bound_f}
        &\Vert f(t,\cdot,u)\Vert \le m,\\\label{eq:bound_sv}
        &\sigma_{\min}(\zeta \nabla_\theta u(\theta) + \hat\zeta\nabla_\theta u(\eta)) \ge \sigma_0.
    \end{align}
Choose $C>1$ and consider a sufficiently small $h>0$ satisfying
$\lambda\hat\zeta h<\frac{1}{C}<1$. Set $C_0 = e^{C\lambda T}$ and $M= C_0/\lambda$. Define $E_0 = C_0 \Vert\psi(t_0,\cdot)-u(\theta_0,\cdot)\Vert$.
Then, the error of the parametrize-then-discretize $\zeta$-method satisfies
\[
\boxed{
\Vert \psi(t_k,\cdot)-u(\theta_k,\cdot) \Vert \leq  E_0 + M\left(\frac{\rho}{h} + (c_s+c_s') h^s\right)}
\]
for all $t_{k+1} = (k+1)h\le T$, where the constants $c_1,c_2>0$ are defined in \eqref{def:const} and 
\[
c_s' = 2\gamma_{s+1}  \left( \frac{2 m}{\sigma_{0}-h\hat\zeta \lambda \gamma_1}\right)^{s+1},\quad s=1,2,
\]
with $\gamma_{n} = \frac{1}{n!} \,p^{n/2}\, \sup_{|\alpha|=n} \Vert\partial^\alpha_\theta u\Vert$ for $n\ge 1$.
\end{theorem}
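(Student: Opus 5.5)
The plan is to reduce the parametrize-then scheme to the discretize-then scheme of \Cref{prop:discr-opt} in the (Lip) case. The discrepancy is a linearization defect, which is absorbed into an enlarged residual. Write $u_k=u(\theta_k,\cdot)$ and $\Delta_k=\theta_{k+1}-\theta_k$, and let $r_k$ denote the parametrize-then residual \eqref{eq:residual_Opt-Disc} evaluated at $\theta_{k+1}$. Then
\[
u_{k+1}-u_k = h\bigl(\zeta f(u_k)+\hat\zeta f(u_{k+1})\bigr) + r_k + d_k,
\]
where
\[
d_k = u_{k+1}-u_k-\bigl[\zeta\nabla_\theta u(\theta_k)+\hat\zeta\nabla_\theta u(\theta_{k+1})\bigr]\Delta_k .
\]
The error recursion \eqref{eq:error} therefore holds verbatim with $r_k$ replaced by $r_k+d_k$. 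The Lipschitz iteration from the proof of \Cref{prop:discr-opt} then gives the bound with $\rho/h$ replaced by $(\rho+\max_k\|d_k\|)/h$. It remains to show $\|d_k\|\le M'$ with $\max_k\|d_k\|\le c_s' h^{s+1}$, so that it contributes $c_s'h^s$.

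\textbf{Step 1: bound $d_k$ by Taylor expansion.} Expanding $u(\theta_{k+1})$ and $\nabla_\theta u(\theta_{k+1})$ around $\theta_k$ gives $\|d_k\|\le 2\gamma_2|\Delta_k|^2$ for $s=1$ and any $\zeta$. For $\zeta=\tfrac12$ (needed for $s=2$), the weighted linearization is the trapezoidal rule applied to $\int_0^1\nabla_\theta u(\theta_k+\tau\Delta_k)\Delta_k\,\dd\tau$. Its quadratic terms cancel, leaving $\|d_k\|\le 2\gamma_3|\Delta_k|^3$. The factor $p^{n/2}$ in $\gamma_n$ accounts for passing from $\sup_{|\alpha|=n}\|\partial^\alpha u\|$ to the $n$-linear form applied to $\Delta_k^{\otimes n}$, via $|\Delta|_1\le\sqrt p|\Delta|$. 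The $1/n!$ comes from the Taylor remainder. The factor $2$ is generous and covers both the value and the gradient expansions.

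\textbf{Step 2: bound $|\Delta_k|$.} This is the main obstacle. From the residual identity,
\[
\bigl[\zeta\nabla u(\theta_k)+\hat\zeta\nabla u(\theta_{k+1})\bigr]\Delta_k = h\bigl(\zeta f(u_k)+\hat\zeta f(u_{k+1})\bigr)+r_k .
\]
Applying \eqref{eq:bound_sv} yields $\sigma_0|\Delta_k|\le hm+\|r_k\|$. To reach the stated denominator $\sigma_0-h\hat\zeta\lambda\gamma_1$, I will instead write $f(u_{k+1})=f(u_k)+(f(u_{k+1})-f(u_k))$ and use $\|u_{k+1}-u_k\|\le\gamma_1|\Delta_k|$ together with the Lipschitz bound. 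Moving $h\hat\zeta\lambda\gamma_1|\Delta_k|$ to the left gives
\[
|\Delta_k|\le \frac{hm+\|r_k\|}{\sigma_0-h\hat\zeta\lambda\gamma_1}.
\]
Next I need $\|r_k\|\le hm$. The residual is minimized, so it is at most its value at some competitor. If the minimization is global, one may compare with $\theta=\theta_k$, which gives $\|r_k\|\le h\|f(u_k)\|\le hm$; I would state this assumption explicitly. The result is $|\Delta_k|\le 2hm/(\sigma_0-h\hat\zeta\lambda\gamma_1)$.

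\textbf{Step 3: combine.} Inserting Step 2 into Step 1 gives $\|d_k\|\le c_s' h^{s+1}$ with exactly the stated constant. Combined with the local error $\|e_k\|\le c_sh^{s+1}$ (the lemma invoked in the proof of \Cref{prop:discr-opt}) and the geometric sum with $C_0=e^{C\lambda T}$, this yields
\[
\|\varepsilon_k\|\le E_0+M\bigl(\rho/h+(c_s+c_s')h^s\bigr).
\]

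The main risks are the constant bookkeeping in Step 1 and justifying $\|r_k\|\le hm$ in Step 2. If the minimizer is only local, I would either require $\rho\le hm$ or carry $\rho$ inside $c_s'$.
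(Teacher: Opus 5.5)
Your proposal is correct and follows essentially the paper's route. You absorb the same linearization defect ($d_k=-p_k$) into the Lipschitz recursion of \Cref{prop:discr-opt}, use the same Taylor/trapezoidal bound $2\gamma_{s+1}\|\theta_{k+1}-\theta_k\|^{s+1}$, and obtain the same update-length bound $2hm/(\sigma_0-h\hat\zeta\lambda\gamma_1)$. Your direct estimate with the competitor $\theta=\theta_k$ is exactly the coercivity-plus-comparison step $G(h,\theta(h))\le G(h,0)$ that the paper packages as Lemma~\ref{lem:least}. Like the paper, you tacitly need the extra step-size restriction $h\hat\zeta\lambda\gamma_1<\sigma_0$ so that this denominator is positive.
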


\begin{proof}
As in the previous proof, we denote $u_k = u(\theta_k,\cdot)$. To distinguish between the norm in the Hilbert space $\hil$ and the Euclidean norm for the parameter space $\C^p$, we write for the latter one $\|\cdot\|_p$. We Taylor expand around $\theta_k$ and $\theta_{k+1}$, 
\begin{align*}
    u_{k+1}-u_k &= \nabla_\theta u(\theta_k)(\theta_{k+1}-\theta_{k}) + q_k, \\
    u_{k}-u_{k+1} &=  \nabla_\theta u(\theta_{k+1})(\theta_{k}-\theta_{k+1}) + \hat q_k,
\end{align*}
with $\Vert q_k\Vert,\Vert \hat q_k\Vert \le  \gamma_2 \Vert \theta_{k+1}-\theta_k\Vert^2_p$.
We add these two equations in combination with $\zeta$-weighting and introduce the residual $r_k = r(\theta_{k+1},\cdot)$,
\begin{align*}
u_{k+1}-u_k &= \zeta \left(u_{k+1}-u_k\right) + \hat\zeta(u_{k+1}-u_k)\\
&= \left(\zeta \nabla_\theta u(\theta_k) + \hat\zeta\nabla_\theta u(\theta_{k+1})\right)  (\theta_{k+1}-\theta_k) + 
\zeta q_k - \hat\zeta q_k \\
&=h\left(\zeta f(u_k) + \hat\zeta f(u_{k+1})\right) + r_k + 
\zeta q_k - \hat\zeta q_k.
\end{align*}
Subtracting this equation from the one for the local error \eqref{eq:local_error}, we obtain
\[
\varepsilon_{k+1} = \varepsilon_k + h\left(
\zeta\left(f(\psi(t_{k}))-f(u_{k})\right) + 
\hat\zeta\left(f(\psi(t_{k+1}))-f(u_{k+1})\right)
\right) + e_k -r_k + p_k
\]
with $p_k = \hat\zeta\hat q_k - \zeta q_k$. For $\zeta=\frac12$, we expand $u$ to third order, so that the
second-order terms cancel under midpoint weighting. Thus, we have
\[
\Vert p_k\Vert \le 2\gamma_{s+1}  \Vert\theta_{k+1}-\theta_k\Vert^{s+1}_p
\]
with $s=2$ for $\zeta=\frac12$ and with $s=1$ for $\zeta\neq\frac12$. To estimate the update length $\Vert\theta_{k+1}-\theta_k\Vert_p$, 
we revisit the scheme \eqref{eq:theta_k+1} and write 
\begin{align*}
&\theta_{k+1} = \theta_k + h\dot\theta_k\ \text{with}\\
&\dot\theta_k = \arg\min_\eta \Vert \left(\zeta \nabla_\theta u(\theta_k) + \hat\zeta\nabla_\theta u(\theta_k+h\eta)\right)\eta 
- \left(\zeta f(u(\theta_k)) + \hat\zeta f(u(\theta_k+h\eta))\right)\Vert.
\end{align*}
This is a nonlinear least squares problem as considered in Lemma~\ref{lem:least} with 
\[
A(\eta) = \zeta \nabla_\theta u(\theta_k) + \hat\zeta\nabla_\theta u(\theta_k+\eta),\quad 
b(\eta) = \zeta f(u(\theta_k)) + \hat\zeta f(u(\theta_k+\eta)). 
\]
The Lemma's coercivity estimate requires a lower bound $\sigma_0>0$ on the singular values of $A(h\eta)$, a Lipschitz bound on $b$, 
and the value of $b(0) = f(u_k)$. The Lipschitz estimate is
\begin{align*}
\Vert b(\eta)-b(y)\Vert &\le \hat\zeta \Vert f(u(\theta_k+\eta))- f(u(\theta_k+y))\Vert
\le \hat\zeta \lambda \gamma_1 \Vert\eta-y\Vert_p
\end{align*}
for all $\eta,y$, and the upper bound for $b(0)$ is $\Vert b(0)\Vert \le m$. Thus, we obtain
\[
\Vert\dot\theta_k\Vert_p \le \frac{2 m}{\sigma_{0}-h\hat\zeta \lambda \gamma_1}
\]
and
\[
\Vert p_k\Vert \le 2\gamma_{s+1} 
\left( \frac{2 m}{\sigma_{0}-h\hat\zeta \lambda \gamma_1}\right)^{s+1} h^{s+1}.
\]
Hence, both the parametrizing error $p_k$ and the local error $e_k$ are of the same order $h^{s+1}$. Once this is established, the Lipschitz argument of Theorem~\ref{prop:discr-opt} provides the claimed global error bound.   
\end{proof}

\begin{remark}
\Cref{prop:opt-discr} extends the preceding error analysis to the parametrize-then-discretize formulation. In comparison with \Cref{prop:discr-opt}, an additional
error contribution appears through the constant $c_s'$, reflecting the
fact that the manifold parametrization enters the dynamics through its
tangent space. The estimate therefore provides a
quantitative explanation for the well-known sensitivity of
Dirac--Frenkel-type methods to tangent-space collapse. 
\end{remark}

\subsection{Norm and energy error}

For the time-dependent Schr\"odinger equation $\I\partial_t \psi(t) = \hat{H}\psi(t)$, $\psi(0) = \psi_0$ with a self-adjoint Hamiltonian operator $\hat{H}$ and initial condition $\psi_0\in\hil$, there are two conserved quantities. The norm and the energy satisfy
\[
\Vert \psi(t,\cdot)\Vert = \Vert\psi_0\Vert,\qquad 
\langle \psi(t,\cdot),H\psi(t,\cdot)\rangle = 
\langle \psi_0,H\psi_0\rangle
\]
for all $t\in\R$. The time-continuous Dirac--Frenkel approximation inherits this conservation property, see e.g. \cite[Chapter II.1.5]{CL1} or  \cite{LS}. Here, however, with the residual-minimization schemes considered above, these conservation errors are controlled by the norm of the global error $\varepsilon_k = \psi(t_k,\cdot) - u(\theta_k,\cdot)$.

\begin{proposition}[Norm and energy]\label{prop:norm_energy}
Let us consider a time-dependent Schr\"odinger equation $\I \partial_t \psi = \hat{H} \psi$ with normalized initial condition $\psi_0\in\hil$, $\Vert\psi_0\Vert = 1$. Then, the norm 
and energy of the approximate solution $u_k:=u(\theta_k,\cdot)$ at time $t_k$ obtained by \eqref{eq:theta_k+1} obey
\begin{align*}
    &\left| \Vert u_k \Vert - 1 \right| \leq \Vert \varepsilon_k \Vert, \\*[1ex]
    &\left| \frac{\langle u_k,\hat{H} u_k\rangle}{\langle \psi_0,\hat{H}\psi_0\rangle} -1 \right| \leq 
    K\, \Vert\varepsilon_k\Vert
\end{align*}
for all $t_k\le T$ with constant $K = \max_{t_k\le T}\left(\Vert \hat{H}\psi(t_k,\cdot)\Vert + \Vert \hat{H}u_k\Vert\right)/|\langle \psi_0,\hat{H}\psi_0\rangle|$.
\end{proposition}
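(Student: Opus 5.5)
The plan is to combine the exact conservation laws of the Schrödinger flow with elementary inequalities in $\hil$, writing $\varepsilon_k = \psi(t_k,\cdot) - u_k$ and $\psi_k := \psi(t_k,\cdot)$ throughout. The first step records the two facts about the exact solution that we rely on. Because $\hat H$ is self-adjoint, the propagator is unitary, so $\Vert\psi_k\Vert = \Vert\psi_0\Vert = 1$. Energy is conserved as well, which gives $\langle \psi_k,\hat H\psi_k\rangle = \langle\psi_0,\hat H\psi_0\rangle$; we assume, as the statement implicitly does, that the latter quantity is nonzero.

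For the norm estimate, the reverse triangle inequality gives
\[
\left|\Vert u_k\Vert - 1\right| = \left|\Vert u_k\Vert - \Vert \psi_k\Vert\right| \le \Vert \psi_k - u_k\Vert = \Vert\varepsilon_k\Vert .
\]
This is the entire argument for the first bound.

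For the energy estimate, we replace the denominator's reference value by the exact energy at time $t_k$ and split the energy difference into two terms:
\[
\langle u_k,\hat H u_k\rangle - \langle \psi_k,\hat H\psi_k\rangle
= \langle u_k - \psi_k,\hat H u_k\rangle + \langle \psi_k, \hat H(u_k-\psi_k)\rangle .
\]
Self-adjointness lets us move $\hat H$ in the second term onto $\psi_k$, which gives $\langle \hat H\psi_k, u_k-\psi_k\rangle$. Applying Cauchy--Schwarz to each term then yields
\[
\left|\langle u_k,\hat H u_k\rangle - \langle \psi_0,\hat H\psi_0\rangle\right| \le \left(\Vert\hat H u_k\Vert + \Vert \hat H\psi_k\Vert\right)\Vert\varepsilon_k\Vert .
\]
Dividing by $|\langle\psi_0,\hat H\psi_0\rangle|$ and taking the maximum over $t_k\le T$ produces exactly the constant $K$ of the statement.

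The only delicate point is a domain issue: the argument needs $\psi_k$ and $u_k$ to lie in the domain of $\hat H$, so that $\hat H\psi_k$ and $\hat H u_k$ belong to $\hil$ and the symmetric rearrangement is legitimate. We take this as part of the hypothesis that $K$ is finite. For Gaussian $u_k$ and smooth potentials of polynomial growth, such as those in the paper's examples, it holds automatically. Once this is granted, no step is a real obstacle. The bound can then be combined with \Cref{prop:discr-opt} or \Cref{prop:opt-discr} to control $\Vert\varepsilon_k\Vert$ explicitly.
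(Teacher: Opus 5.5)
Your proof is correct and follows the paper's argument essentially step for step. You use the reverse triangle inequality for the norm (the paper writes it as two one-sided triangle inequalities), and for the energy the same splitting $\langle u_k-\psi_k,\hat H u_k\rangle + \langle \psi_k,\hat H(u_k-\psi_k)\rangle$ with self-adjointness and Cauchy--Schwarz, while making explicit the energy conservation, $\langle\psi_0,\hat H\psi_0\rangle\neq 0$, and domain assumptions that the paper leaves implicit.
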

\begin{proof}
Using the triangular inequality and considering that $\Vert\psi(t_k,\cdot)\Vert = 1$, we observe that
\begin{align*}
    &\Vert u_k  \Vert  \leq  \Vert u_k - \psi(t_k,\cdot) \Vert + \Vert \psi(t_k,\cdot) \Vert = \Vert \varepsilon_k\Vert + 1,\\
    &1  \le \Vert u_k\Vert + \Vert u_k - \psi(t_k,\cdot) \Vert = \Vert u_k\Vert + \Vert \varepsilon_k\Vert.
\end{align*} 
For the energy, we use the self-adjointness of $H$ and the Cauchy-Schwarz inequality to obtain
\begin{align*}
   \left| \langle u_k,\hat{H}u_k\rangle - \langle \psi_0,\hat{H}\psi_0\rangle \right| &=
   \left| \langle u_k,\hat{H}u_k\rangle - \langle \psi(t_k,\cdot),\hat{H}\psi(t_k,\cdot)\rangle \right|\\
   &= 
   \left| \langle u_k-\psi(t_k,\cdot),\hat{H}u_k\rangle + \langle \psi(t_k,\cdot),\hat{H}\left(u_k-\psi(t_k,\cdot)\right)\rangle \right|\\
   &\le \Vert\varepsilon_k\Vert \left(\Vert \hat{H}\psi(t_k,\cdot)\Vert + \Vert \hat{H}u_k\Vert\right). 
\end{align*}
\end{proof}


\section{Gaussian integrals and initialization} \label{sec:gaussian_formulas}

For a Gaussian basis and $\hil=L^2(\R^d,\C)$, the cost function $F_k(\theta)=\Vert r_k(\theta,\cdot) \Vert^2_\hil$ for parameter $\theta\in\Theta$ can be expressed in terms of sums of Gaussian integrals. They are at the core of successful computational methods like the variational multi-configuration Gaussian method (vMCG) \cite{Richings_etal2015}. Here, for a simple and conceptual exposition, we focus on the univariate case $d=1$. Let us recall that we use complex parameters $\theta=(a,\kappa,\gamma)\in\Theta =\siegel\times \C^2 \subset \C^3$ and denote the single Gaussian function by
\begin{equation*}\label{eq:gaussian_def_1d}
    g(\theta,x) := e^{-ax^2+\kappa x +\gamma},\quad x\in\R.
\end{equation*}
Notice that the identity $g(\theta_1,x) g(\theta_2,x) = g(\theta_1+\theta_2,x)$ holds for any $\theta_1,\theta_2\in\Theta$ with this parametrization choice, which leads to compact formulas for the inner products.

\subsection{Monomial integrals}
For $n\in\N$, we then define the $n$-monomial Gaussian integral as
\begin{equation}\label{eq:Gn}
    G_n(\theta) := \int_{\R} x^n g(\theta,x) \dd x.
\end{equation}
The case $n=0$ is the usual Gaussian integral
\begin{equation}\label{eq:G0}
    G_0(\theta) = e^{\frac{\kappa^2}{4a}+\gamma} \sqrt{\frac{\pi}{a}},
\end{equation}
which holds for complex parameters $\theta$ as long as $a\in\siegel$, see \cite[Appendix A]{Fol89}.

With this formula, one can determine the explicit form of the integral $G_n(\theta)$ for any $n\in\N$, using complex scaled\footnote{Note that the coefficients of this polynomial are not complex. In fact, its coefficients are the absolute values of the coefficients of the usual Hermite polynomial.} (physicists') Hermite polynomials $\tilde{H}_n(x):=(-\I )^n H_n(\I x)$. They obey the three-term recurrence
\begin{align*}
&\tilde{H}_{n+1}(x)=2x \tilde{H}_n(x) + 2n \tilde{H}_{n-1}(x),\quad n>0,\\
&\tilde{H}_0(x)=1, \ \tilde{H}_1(x)=2x.
\end{align*}
 This scaling provides a convenient representation of the integrals without additional powers of $\I$.

\begin{formula} \label{prop:n_gaussian_integral}
Given $\theta=(a,\kappa,\gamma)\in\Theta$ and $n\in\N$, the closed form of the $n$-monomial Gaussian integral and its gradient is given by
\begin{align*}
    G_n(\theta) &= (2\sqrt{a})^{-n} \tilde{H}_n\left( \frac{\kappa}{2\sqrt{a}} \right) G_0(\theta),\\
    \nabla_{\theta} G_{n}(\theta) &= \begin{pmatrix}
       - G_{n+2}(\theta) &
       G_{n+1}(\theta) &
       G_{n}(\theta)
    \end{pmatrix}
\end{align*}
with a suitable choice of the branch\footnote{In this paper, we have chosen the branch with $|{\rm Arg}(\sqrt{a})|<\pi/4$, since we want these formulas to reduce to the usual Gaussian integrals when $a$ is real, i.e., we want a positive $\sqrt{a}$ as $\Im(a)\to 0$. } of the square root for $a\in\siegel$; see Appendix~\ref{app:Gauss} for a proof.
\end{formula}

\subsection{Polynomial integrals and quadrature}\label{sec:GH}
For a Gaussian integral with respect to a function $\varphi:\R\to\C$ we denote
\begin{equation*}
    I(\theta;\varphi) := \int_{\R} \varphi(x) g(\theta;x) \dd x.
\end{equation*}
For a vector field $\vec{\Phi}:\R\to\C^m$ with components $\Phi_1,\ldots,\Phi_m$, we set
\begin{equation*}
    I(\theta;\vec{\Phi}) := \begin{pmatrix}
       I(\theta;\Phi_1) & 
       \ldots & 
       I(\theta;\Phi_m)
    \end{pmatrix}.
\end{equation*}
With this compact notation, Gaussian inner products and their parameter gradients can be stated as follows. 

\begin{formula}
Let $Q_1(\xi,\eta;\cdot),Q_2(\xi,\eta;\cdot):\R\to \C$ be functions with arbitrary but fixed parameters $\xi,\eta\in\Theta\subset \C^p$.
 Then, for $\sigma\in\{\xi,\eta\}$,
\begin{align*}
   \left\langle Q_1(\xi,\eta;\cdot) g(\xi,\cdot),Q_2(\xi,\eta;\cdot)g(\eta,\cdot) \right\rangle_\hil &= I(\bar{\xi}+\eta;\varphi),\\
   \nabla_{\sigma} \left\langle Q_1(\xi,\eta;\cdot) g(\xi,\cdot),Q_2(\xi,\eta;\cdot)g(\eta,\cdot) \right\rangle_\hil &= 
   I(\bar{\xi}+\eta; \nabla_{\sigma} \varphi) + \delta_{\sigma \eta} I(\bar{\xi}+\eta; \vec{v} \varphi)
\end{align*}
with $\varphi:=\overline{Q_1(\xi,\eta;\cdot)} Q_2(\xi,\eta;\cdot)$ and $\vec{v}(x)=(-x^2,x,1)$.
\end{formula}

\begin{proof}
The first identity follows from $\overline{g(\xi,\cdot)} g(\eta,\cdot) = g(\bar\xi+\eta,\cdot)$. The subsequent gradient identity stems from two observations. First, for a function $\varphi$, that depends on a parameter $\theta$, the integral satisfies
\begin{align*}
    \nabla_\theta I(\theta;\varphi) = I(\theta;\nabla_\theta \varphi) + I(\theta;\varphi\vec{v}).
\end{align*} 
Second, only the $\eta$-derivative sees the parameter dependence of the Gaussian  $g(\bar\xi+\eta,\cdot)$ and creates the vector $\vec{v}$.
\end{proof}

For polynomial functions, we obtain a compact description of the Gaussian weighted inner products in terms of the monomial integrals $G_n(\theta)$, $n\ge0$, introduced in \eqref{eq:Gn}. For a multi-variate extension of this type of formula, see \cite{VanicekZhang2025}.

\begin{formula} \label{Prop:inner_prod_xi_eta}
Let $Q_1$ and $Q_2$ be polynomials with parameter dependent coefficients,  
\begin{equation*} 
    Q_1(\xi,\eta,x) = \sum_{i=0}^{m} \alpha_i(\xi,\eta) x^i, \quad
    Q_2(\xi,\eta,x) = \sum_{j=0}^{n} \beta_j(\xi,\eta) x^j.
\end{equation*}
We assume that $\overline{\alpha_i(\xi,\eta)} = \alpha_i(\bar\xi,\bar\eta)$ for all $i=0,\ldots,m$. Then, the Gaussian weighted inner product and its $\sigma$-gradient, $\sigma\in\{\xi,\eta\}$, are given by
\begin{align*}
   \left\langle Q_1(\xi,\eta;\cdot) g(\xi,\cdot),Q_2(\xi,\eta;\cdot)g(\eta,\cdot) \right\rangle_\hil 
   &= \sum_{s=0}^{n+m} c_s(\xi,\eta) G_s(\bar{\xi}+\eta),\\
   \nabla_\sigma \left\langle Q_1(\xi,\eta) g(\xi,\cdot),Q_2(\xi,\eta)g(\eta,\cdot) \right\rangle_\hil 
   &= \sum_{s=0}^{n+m} \left( \nabla_\sigma c_s(\xi,\eta) \cdot G_s(\bar{\xi}+\eta) + \delta_{\sigma \eta} c_s(\xi,\eta) \nabla_\eta G_s(\bar{\xi}+\eta) \right)
\end{align*}
where
\begin{align*}
c_s(\xi,\eta) = \sum_{\substack{i,j\\i+j=s}}^{m,n} \alpha_i(\bar\xi,\bar\eta) \beta_j(\xi,\eta),\quad
\nabla_\eta c_s(\xi,\eta) = \sum_{\substack{i,j\\i+j=s}}^{m,n} \alpha_i(\bar\xi,\bar\eta) \nabla_\eta \beta_j(\xi,\eta) . 
\end{align*}
\end{formula}

When closed-form expressions are unavailable, the integrals must be approximated numerically.
For univariate Gaussian integrals with smooth integrands, Gauss--Hermite quadrature 
\begin{equation} \label{eq:GH_quadrature}
   \mathcal{Q}_n(\varphi) = \sum_{j=1}^n w_j \varphi(x_j)
\end{equation}
seems to be the method of choice.  In the numerical experiments of \Cref{sec:int_quad}, we have considered integrand functions $\varphi$, that allow for a change of variables in the complex plane, and have used the approximation 
\[
I(\theta;\varphi) \approx \quadr_n (\theta;\varphi)
\]
with 
\begin{equation} \label{eq:GH_quad_theta}
    \quadr_n (\theta;\varphi) = \frac{e^{\kappa^2/4a+\gamma}}{\sqrt{a}} \sum_{j=1}^n w_j \varphi\left( \frac{x_j}{\sqrt{a}} + \frac{\kappa}{2a} \right).
\end{equation}

\begin{remark}
In the numerical experiments reported in \Cref{sec:int_quad}, we compare the exact Gaussian formulas with the results 
obtained by 10-point Gauss--Hermite quadrature. 
Their computational cost is comparable in the uni-variate case, but not in multiple dimensions.
For quantum molecular simulations in higher dimensions, however, exact formulas 
are the method of choice, see e.g. \cite{Richings_etal2015,VanicekZhang2025}.
\end{remark}

\subsection{Initializing a Gaussian mixture}\label{sec:init_Gauss}

The initial parameter vector
$\theta_0\in\Theta$ determines the initial approximation
$u(\theta_0,\cdot)\in\mathcal M$ of the given datum $\psi_0$.
As highlighted by the error estimates of Theorems~\ref{prop:discr-opt} 
and~\ref{prop:opt-discr}, the quantity 
\[
\|\psi_0-u(\theta_0,\cdot)\|_{\mathcal H}^2
\]
constitutes one of the relevant error sources. A sufficiently accurate
initialization is therefore essential for observing the asymptotic
behavior predicted by the theory. 

\paragraph{Gaussian notation.}
In this section, for a clear geometric interpretation, we will denote the Gaussians in terms of their width coefficient $a\in\siegel$, center $\nu\in\C$ and amplitude $C\in\C^*$ and to avoid confusion with the previous use of $\theta$, we denote this parametrization by $\xi$. Namely, we denote the Gaussian basis by
\[
b(\xi,x) = C e^{-a(x-\nu)^2},\quad x\in\R,
\]
with $\xi=(a,\nu,C)$.
The theta parameter from the previous subsection in terms of these data is $\theta=(a,2a\nu,-a\nu^2+\log C )$.

For the initial Gaussian approximations 
$u(\xi_0,\cdot) = \sum_{l=1}^L b(\xi_0^l,\cdot)$ with $\xi_0^l = (a_0^l,\nu_0^l,C_0^l)$
used in the numerical experiments of Sections~\ref{sec:Num_Experiments} and \ref{sec:Num_Experiments_II}, we have employed the heuristic construction 
described below. It assumes that the critical points of $\psi_0$ are known. 
The cost function under consideration in the following construction is
\[
F(\xi) = \|\psi_0-u(\xi,\cdot)\|_{\mathcal H}^2,
\]
with the parameter space $\Xi=(\siegel\times\C\times\C^*)^L$.

\subsubsection{Basic initialization} \label{sec:init_general}
The objective of the following procedure is not to solve the global optimization problem
$\xi_0=\arg\min_{\xi\in\Xi} F(\xi)$,
which may be ill-posed or characterized by many local minima, but rather to generate a
well-conditioned initial guess with subsequent local optimization. In addition to approximation quality, the construction seeks to avoid strong overlap between basis functions, poor conditioning and thus adversely affects on the parametrize-then formulation.

\begin{itemize}
    \item[(i)] \textbf{Select Gaussian number and width.} Set the number of basis functions $L\geq 1$ and set the width parameters $a_0^l=a$ for all $l=1,\ldots,L$ for some constant $a>0$. Of course, the width need not be uniform and could be chosen in temperate progression as in \cite{Rowan_etal2020}. The number of basis functions $L$ could be the number of critical points of $\psi_0$.
    \item[(ii)] \textbf{Place Gaussian centers.} Center the Gaussians at the critical points of $\psi_0(x)$, which we denote as $\nu_0^l$ for $l=1,\ldots,L$.
    \item[(iii)] \textbf{Determine amplitudes.} Choose $\gamma_0^l$ as the solutions of the set of equations $\psi_0(\nu_0^l) - u(\xi,\nu_0^l)=0$ with $l=1,\ldots,L$.
    \item[(iv)] \textbf{Local optimization.} Improve the approximation $\xi_0$ by minimizing the cost function $F(\xi)$, using the values of steps (i)--(iii) as the initial guesses, i.e., $\xi^{(0)}=(a_0^l,\nu_0^l,C_0^l)_{l=1,\ldots,L}$. 
\end{itemize}

For quantum mechanical problems, we assume that $\Vert\psi_0\Vert=1$ and the local optimization problem will be constrained to $\Vert u(\xi,\cdot)\Vert=1$. 

\subsubsection{Refinement strategy}\label{sec:refine}
If the attained residual remains above a prescribed tolerance,
repeat steps (i)--(iv) with a larger number of Gaussian basis
functions. The refinement is performed by replacing each Gaussian
component by an odd number $m\ge 1$ of localized sub-components whose centers are
distributed according to the roots of an appropriate Hermite
polynomial. Since derivatives of Gaussians are Gaussian-envelope
polynomials, these roots provide natural locations for the refinement, see also \Cref{sec:init_poly}.
Specifically, for a Gaussian component $b(\xi_0^l,\cdot)$ with
$\xi_0^l=(a_0^l,\nu_0^l,C_0^l)$, the refined centers are
initialized as
\[
\nu_0^{l'}
=
\nu_0^l+\frac{x_{l'}}{\sqrt{a_0^l}},
\qquad
l'=1,\ldots,m,
\]
where $x_{l'}$ are the roots of the physicists' Hermite polynomial
$H_m$. The remaining parameters are then obtained from the
optimization problem of step (iv), subject to fixed centers and width parameters $a_0^{l'}$ subject to the linear constraint $(1+\varepsilon)a^l_0 \leq a^{l'} \leq a_{\max}$ for some small $\varepsilon>0$ and an upper bound $a_{\max}>(1+\varepsilon)a_0^l$.
Without the constraints, the optimizer will yield a single Gaussian as the trivial solution, e.g., the centers of the sub-components will cluster around the same point or one sub-component will converge to the original Gaussian and the rest will take weight zero. 
For all the experiments presented in this paper, we have used initial constants $a=1.5$ for step (i) of the basic initialization and initial widths parameters $a_0^{l'}=1.5 a_0^l$ for the Gaussian subcomponents (refinement stage), which we also use as the upper bound, i.e., $a_{\max}=1.5 a_0^l$.

\begin{remark}
Although the present construction is only used during initialization, a similar approach could be used for dynamically adding or removing Gaussians during the time evolution. Such adaptive strategies are not considered here but constitute interesting future work.
\end{remark}

\subsubsection{Polynomials with a Gaussian envelope} \label{sec:init_poly}
The initialization described above relies on
the availability of characteristic locations of the target function,
most notably its critical points. For general initial data these points
may have to be determined numerically. For the class of
Gaussian-envelope polynomials, however, they can be computed
systematically. This class is relevant for the numerical examples considered
below and is preserved under differentiation. We consider initial data of the form
\[
\psi_0(x)=P_n(x)b(\xi^{\rm e},x), \qquad x\in\mathbb R,
\]
where $P_n$ is a polynomial with complex coefficients and $b(\xi^{\rm e},\cdot)$ a Gaussian with parameters\footnote{Here, we use superscripts for the Gaussian envelope to avoid confusion with the Gaussian components of the approximation $u(\xi,\cdot)$.} 
$\xi^{\rm e}=(a^{\rm e},\nu^{\rm e},C^{\rm e})\in \mathbb H\times\C\times \C^*$.
Let $P_m$, $m\ge n$, be defined recursively by
\[
P_{m+1}(x)
=
2a^{\rm e}(x-\nu^{\rm e})P_m(x)-P_m'(x),
\qquad m\ge n,
\]
with initial polynomial $P_n$ given by the representation of
$\psi_0$. Then repeated differentiation gives
\[
\psi_0^{(j)}(x)
=
(-1)^j P_{n+j}(x)b(\xi^{\rm e},x),
\qquad j\ge 0 .
\]
Since the Gaussian factor does not vanish, the zeros of
$\psi_0^{(j)}$ coincide with the zeros of $P_{n+j}$. 

For initial data of this form, the locations required in the
initialization procedure of \Cref{sec:init_general} are therefore obtained from
the roots of $P_{n+1}$. Likewise, the roots
needed in the refinement strategy of \Cref{sec:refine} arise from the same
recursion applied to individual Gaussian components. In the special case $P_n\equiv 1$, the recursion generates scaled
Hermite polynomials. Thus, the Hermite-root placement used in the
refinement step can be viewed as the pure-Gaussian instance of the
same general mechanism.

\subsubsection{Illustration of the initialization procedure}

We illustrate the initialization strategy for two of the initial conditions of the numerical experiments in 
\Cref{sec:Num_Experiments} and \Cref{sec:Num_Experiments_II}. The aim is to construct a compact Gaussian mixture that accurately
represents the initial condition while providing sufficient
expressiveness for the subsequent time evolution. Since the dynamics
may rapidly leave the single-Gaussian manifold, the initialization
is guided not only by the approximation quality at time $t=0$ but
also by the anticipated complexity of the evolution.

\begin{figure}[htb!]
    \centering
    \begin{subfigure}{0.45\textwidth}
    \includegraphics[width=\linewidth]{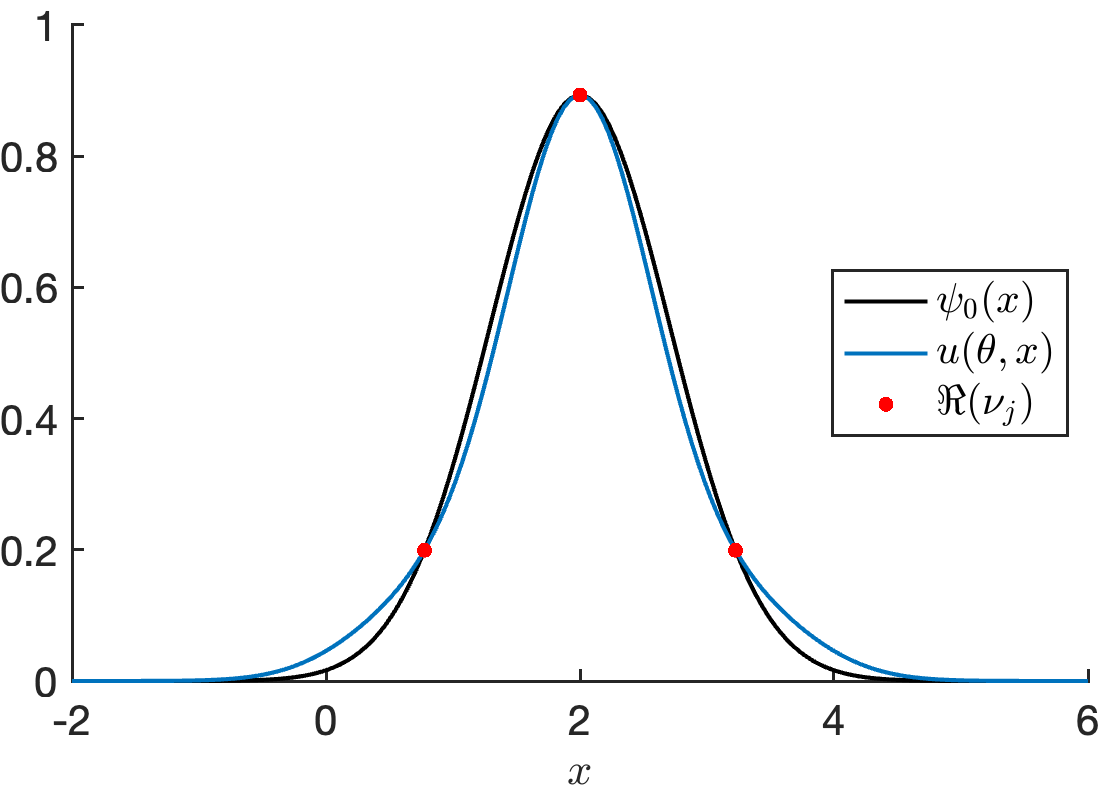}
    \caption{Example 1: steps (i)--(iii) + refinement} \label{fig:init_Gauss_a}
    \end{subfigure}
\hfill
    \begin{subfigure}{0.45\textwidth}
    \includegraphics[width=\linewidth]{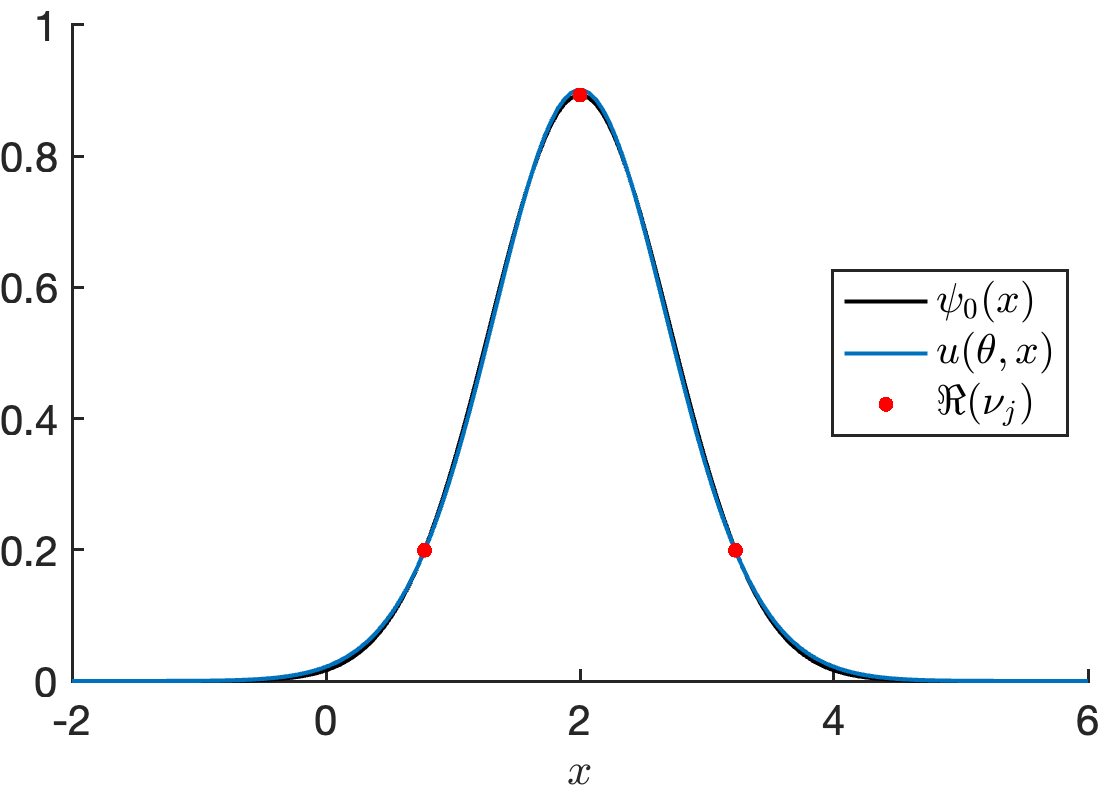}
    \caption{Example 1: step (iv)} \label{fig:init_Gauss_b}
    \end{subfigure} 
    
    \vspace{0.5em}
    
    \begin{subfigure}{0.45\textwidth}
    \includegraphics[width=\linewidth]{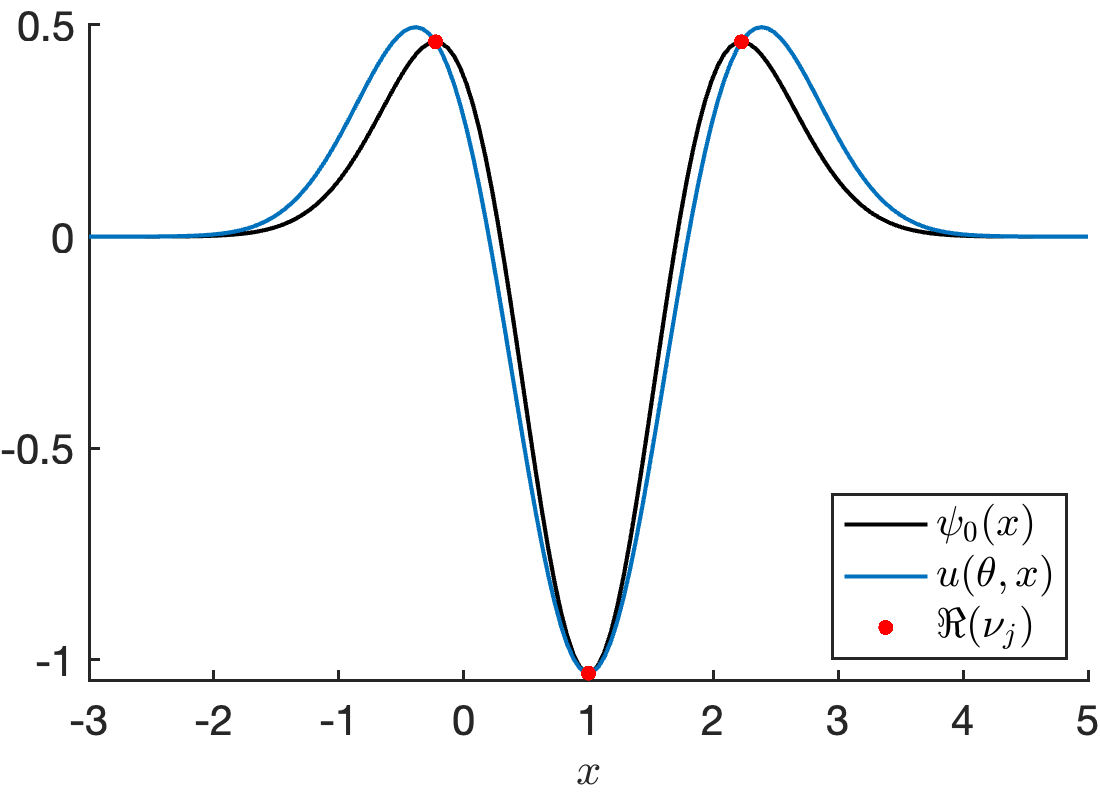}
    \caption{Example 2: steps (i)--(iii)} \label{fig:init_Gauss_c}
    \end{subfigure} 
\hfill
    \begin{subfigure}{0.45\textwidth}
    \includegraphics[width=\linewidth]{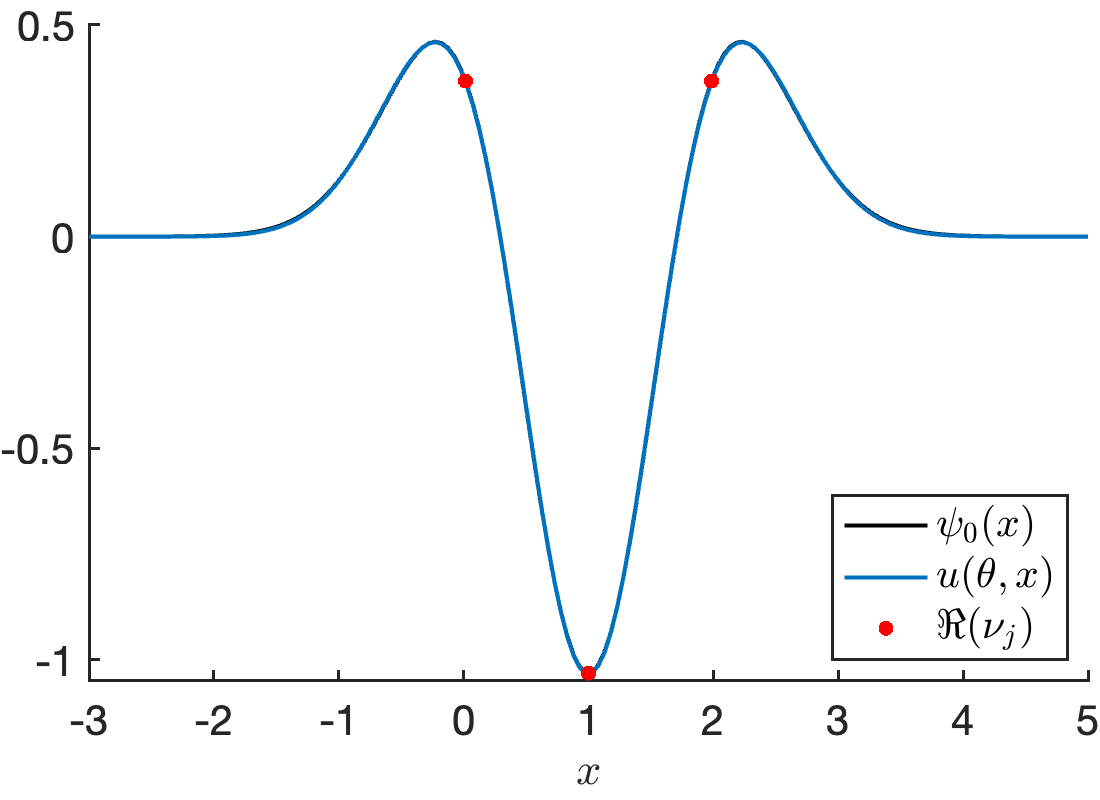}
    \caption{Example 2: step (iv)} \label{fig:init_Gauss_d}
    \end{subfigure} 

    \caption{Approximation of two types of wave packets by $L=3$ Gaussians. The top and bottom row show the approximation of the single Gaussian $\psi_0^{\rm Ex.1} = \mathcal N_0 e^{-(x-2)^2}$ and of the excited state $\psi_0^{\rm Ex.2} =\mathcal N_0 H_2(x-1)e^{-(x-1)^2}$, respectively, where $\mathcal N_0$ are normalization constants. The plots to the left show the approximation with steps (i)--(iii), and the plots to the right show the improvement with step (iv).} \label{fig:init_Gauss}
\end{figure}

\Cref{fig:init_Gauss} illustrates the initialization procedure.
Here, we show the approximation with three Gaussian components to emphasize the improvement with step (iv), since using more Gaussians already provides a very good approximation with only steps (i)--(iii).
The critical-point
construction provides suitable locations for the Gaussian components,
and the subsequent constrained optimization improves the approximation
without introducing degenerate Gaussian configurations. The resulting
mixtures serve as initial values for the numerical experiments in
Sections~\ref{sec:Num_Experiments} and~\ref{sec:Num_Experiments_II}.

 
\section{Qualitative Schrödinger dynamics}\label{sec:Num_Experiments}

We conduct numerical experiments for the uni-variate Schrödinger equations
\[
\I \partial_t\psi(t,x) = \left( -\frac12\partial_x^2 + V(t,x)\right) \psi(t,x),\quad \psi(0,x) = \psi_0(x)
\]
with three choices for the potential function $V(t,x)$ and various initial conditions $\psi_0\in \hil$ with $\hil = L^2(\R,\C)$. 
The purpose of the present section is to illustrate the qualitative behavior for several representative
Schrödinger dynamics. Quantitative investigations of residual minimization,
time discretization, quadrature errors, norm and energy conservation are presented in \Cref{sec:Num_Experiments_II}.

\paragraph{Experimental design.}
The error estimates of \Cref{prop:discr-opt,prop:opt-discr} separate the contributions of time discretization and residual minimization. In practical computations, however, additional sources of error are present. These include the initialization error
as well as quadrature errors arising when residual norms and gradients are evaluated numerically. Throughout the experiments, these additional errors are controlled sufficiently so that the asymptotic behavior predicted by the theory becomes observable. Furthermore, the number $L$ of Gaussian basis functions was chosen such that the approximation manifold is sufficiently expressive while maintaining a reasonably well-conditioned parametrization. Since the parametrize-then-discretize formulation depends explicitly on the conditioning of the parametrization gradient, this choice enables a meaningful comparison of the two residual formulations under conditions where neither method is \emph{a priori} disadvantaged.

\bigskip
We have considered the following three test cases with different initial conditions.

\paragraph{Test 1: Harmonic oscillator (exactly representable benchmark).}  The potential is the standard harmonic one,  
    \[
    V(t,x) = \frac12 x^2,\quad x\in\R,\quad t\in[0,10].
    \]
    We recall that the ground state is the centered Gaussian $\psi_{\rm GS}(x)=\mathcal{N}_0 e^{-\frac{1}{2} x^2}$ with energy $E_{\rm GS}=\langle\psi_{\rm GS},\hat{H}\psi_{\rm GS}\rangle_\hil=\frac{1}{2}$. We consider two different initial conditions, for which harmonic quantum motion can be explicitly resolved within a nonlinear parametrization. The first one lies within the range of a Gaussian parametrization, whereas the second one requires Hagedorn functions, see \cite[Theorem~2.5]{Hagedorn1998} or the review \cite[Chapter 4]{LL}.
\begin{description}
    \item[Shifted Gaussian.] The shifted Gaussian $ \psi_0(x)=\mathcal{N}_0 e^{-(x-2)^2}$ has its
    energy $E_0 = \langle\psi_0,\hat{H}\psi_0\rangle_\hil = 2.625$ significantly above the ground state energy and has a slimmer initial width than that of the harmonic ground state. \Cref{fig:harmonic} shows the evolution of this state. The numerical solution indicates that the width coefficient $\Re(a)$ oscillates in the interval $[\frac{1}{4},1]$ and its center oscillates (with a different period) in the interval $[-2,2]$.
    \item[Shifted excited state.] The initial condition $\psi_0(x)=\mathcal{N}_0 H_2(x-X_0) e^{-\frac{1}{2}(x-X_0)^2}$ is similar to the second excited state, except for the initial shift to the right given by $X_0=1$. 
    We evolve this state considering a three-Gaussian approximation, see \Cref{sec:init_poly} for the determination of the initial value $\theta_0$.
    \Cref{fig:harmonic} shows that the constant-time sections of the density $|\psi(t,x)|^2$ remains identical to that of the first excited state $\psi_{1S}$, except for a shift by $X\in [-X_0,X_0]$; namely, the probability density is of the form $|\psi(t,x)|^2=|\psi_{1S}(0,x-X(t))|^2$.
\end{description}

This example serves as a benchmark case in which the Gaussian manifold
is invariant under the exact dynamics. Consequently, the residual can
be minimized to machine precision and the observed errors are
dominated by time discretization.

\begin{figure}[htb!] 
    \centering
    \includegraphics[width=0.45\linewidth]{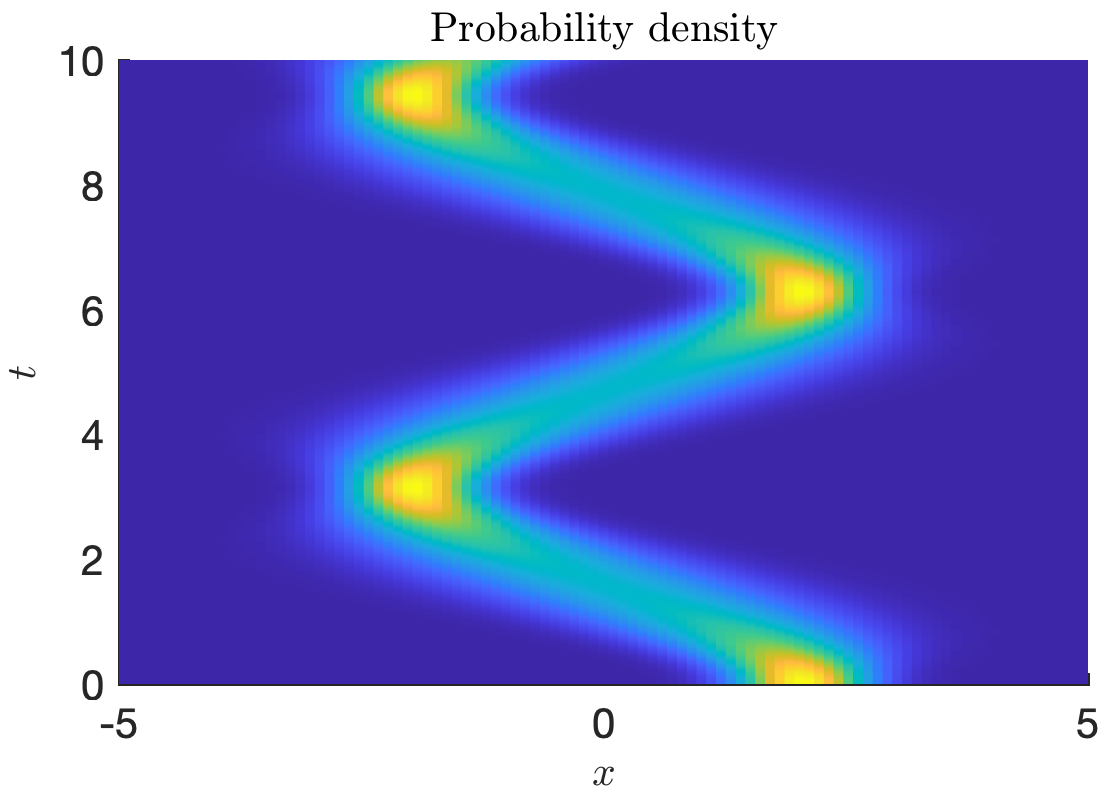}
    \includegraphics[width=0.45\linewidth]{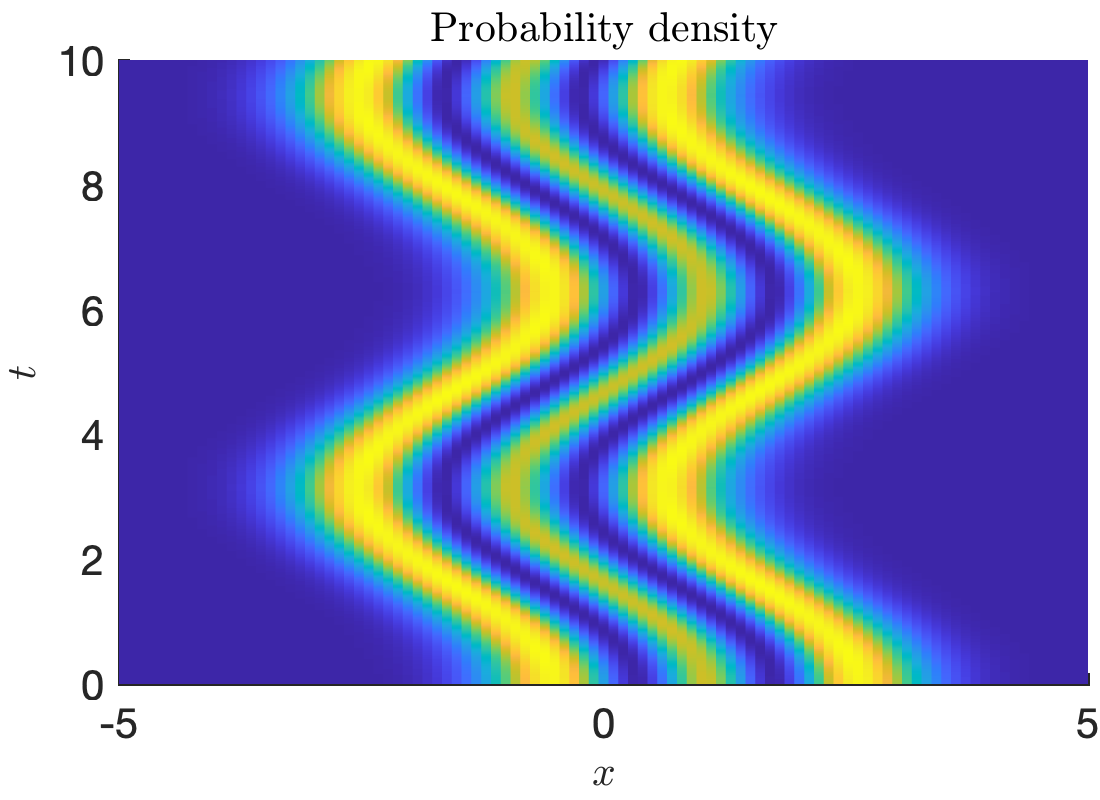} 
    \caption{Contour plots of the position density $|\psi(t,x)|^2$ for the harmonic oscillator. The left plot shows the dynamics for the Gaussian initial data, the right plot for the excited initial condition with a three-Gaussian approximation. } \label{fig:harmonic}
\end{figure}

\paragraph{Test 2: Double-well (interference and splitting).} The potential is a double-well
    \[
    V(t,x) = \alpha_4 x^4 + \alpha_2 x^2,\quad x\in\R,\quad t\in[0,10].
    \]
    Following \cite{Joubert2022,FLL}, we choose the values for the potential as $\alpha_2=-1/8$ and $\alpha_4=(\alpha_2)^2$. 
    The local minima $y_{\min}=-1/4$ are then located at $x_{\pm}=\pm 2$, and the local maximum $y_{\max}=0$ is located at $x_0=0$. 
    We note that double-well dynamics cannot be exactly resolved within nonlinear parametrization. 
\begin{description}
    \item[First excited state.]  We start by studying the evolution of the first excited state in order to visualize the stability of the numerical schemes presented in the paper. To approximate this state, we consider the two-Gaussian approximation $u_0(x) = \mathcal{N}_0 ( e^{-a_*(x-\nu_*)^2} - e^{-a_0(x+\nu_*)^2} ) $, where $a_*=0.4475$ and $\nu_*=1.8732$ were determined numerically by minimizing the energy function $E(\theta)=\langle u(\theta,\cdot),\hat{H} u(\theta,\cdot) \rangle$ and imposing an antisymmetry condition on the Gaussian components of $u(\theta,\cdot)$. The energy of this state is $E_{0} = 0.1668$.  The numerical result shown on the left-hand side of \Cref{fig:double-well} shows the stationary nature of this state, i.e., the probability density function is time-independent.
    \item[Gaussian superposition.] Then we continue with the superposition 
of Gaussians centered in the left and the right well, namely, $\psi_0 (x) = \mathcal{N}_0 ( e^{-(x-2)^2} - e^{-(x+2)^2} ) $. 
The energy of this state is $E_0=0.3181$, which is larger than the first excitation energy. The centers of the Gaussian components are shifted relative to the centers $\pm\nu_*$ of the first excited state, and their initial width is significantly slimmer. 
\end{description}

This example illustrates a regime in which the solution leaves the
single-Gaussian manifold and motivates the use of multiple Gaussian
components.

\begin{figure}[htb!]
    \centering
    \includegraphics[width=0.45\linewidth]{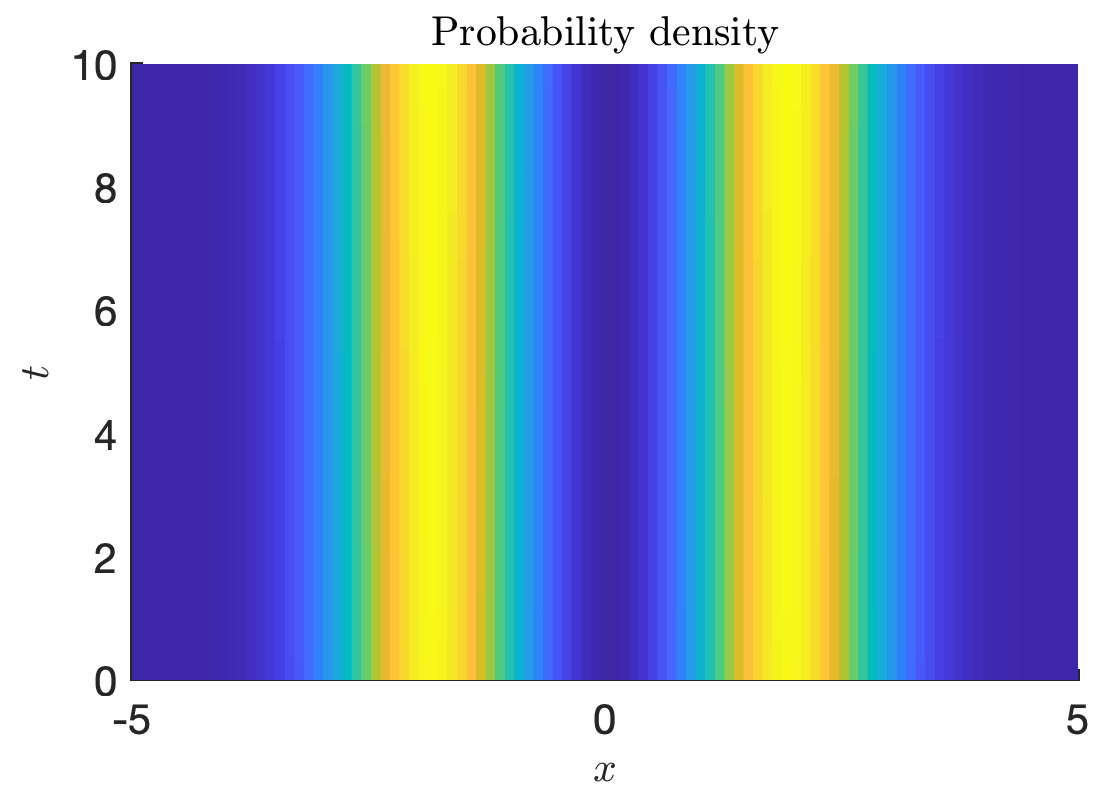} 
    \includegraphics[width=0.45\linewidth]{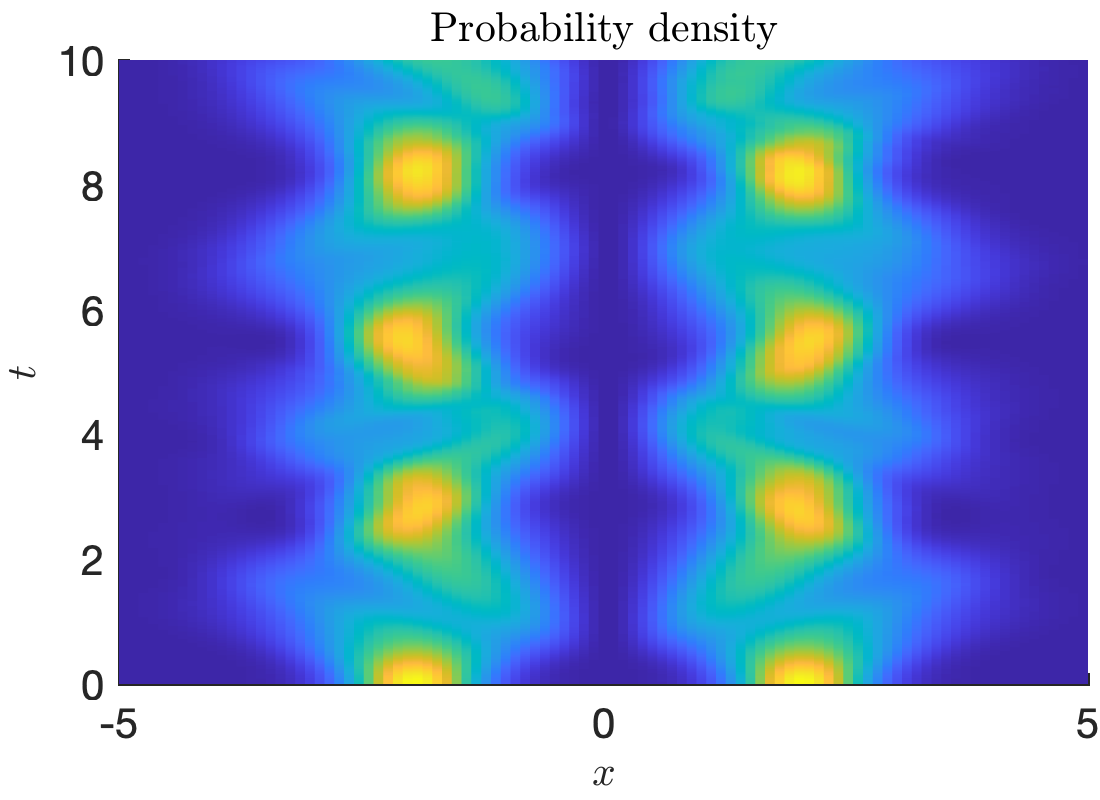} 
    \caption{Contour plots of the position density $|\psi(t,x)|^2$ for the double-well. The left plot shows the dynamics of the first excited state (two-Gaussian approximation), the right plot shows the superposition (six-Gaussian approximation). } \label{fig:double-well}
\end{figure}

\paragraph{Test 3: Hyperbolic cosine (strongly non-polynomial dynamics).} The potential is the hyperbolic cosine 
\[
V(t,x) = \cosh(x)-1, \quad x\in\R, \quad t\in [0,10].
\]
We choose this potential since its second-order Taylor expansion about $x=0$ is the harmonic oscillator. However, the dynamics will be different in general.
We study the evolution of a single Gaussian initial condition $\psi_0(x)=\mathcal{N}_0 e^{-\frac{1}{2}(x-1)^2}$ considering a multi-Gaussian approximation, i.e., we consider a refinement as described in \Cref{sec:refine}. For reference, the energy of this state is $E_0=1.2313$, which is larger than the energy $E_{GS}=0.5301$ of the single-Gaussian approximation of the ground state $\psi_{GS}(x)= \mathcal{N}_0 e^{-0.5591 x^2}$. 
The same initial condition in a harmonic oscillator potential would produce a solution whose density $|\psi(t,x)|^2$ constant-time sections do not change, except for a shift in the center of the Gaussians $X\in [-1,1]$, as in the example shown in the left-hand side of \Cref{fig:harmonic}. 
This is no longer the case in a hyperbolic cosine potential, as one can observe in \Cref{fig:cosh}. 
Moreover, one single Gaussian is not enough to describe the correct behavior of the wave function, but a refinement with at least three Gaussians is needed. The plot on the right panel shows the evolution for a $L=5$ approximation.

\bigskip
This example demonstrates that the framework remains effective even
when the dynamics are no longer generated by a polynomial Hamiltonian
and exact representability is lost.

\begin{figure}[htb!] 
    \centering 
    \begin{subfigure}{0.45\textwidth}
    \includegraphics[width=\linewidth]{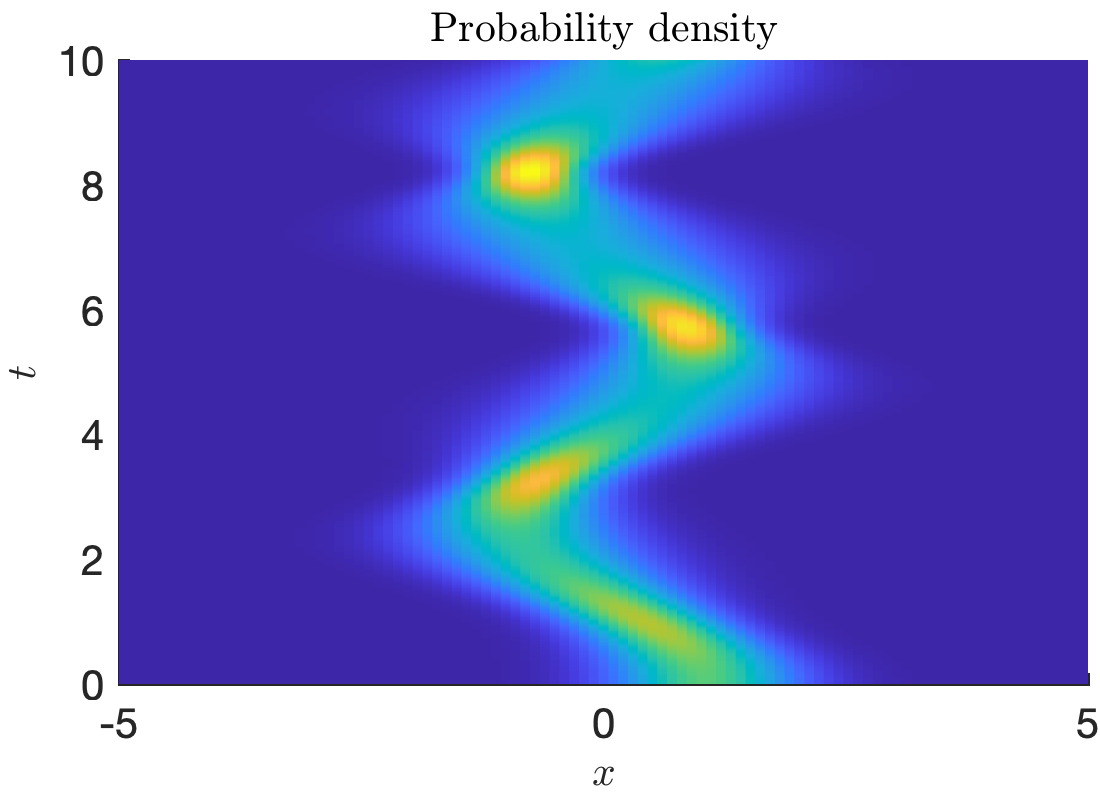}
    \caption{$L=1$}
    \end{subfigure}
\hfill
    \begin{subfigure}{0.45\textwidth}
    \includegraphics[width=\linewidth]{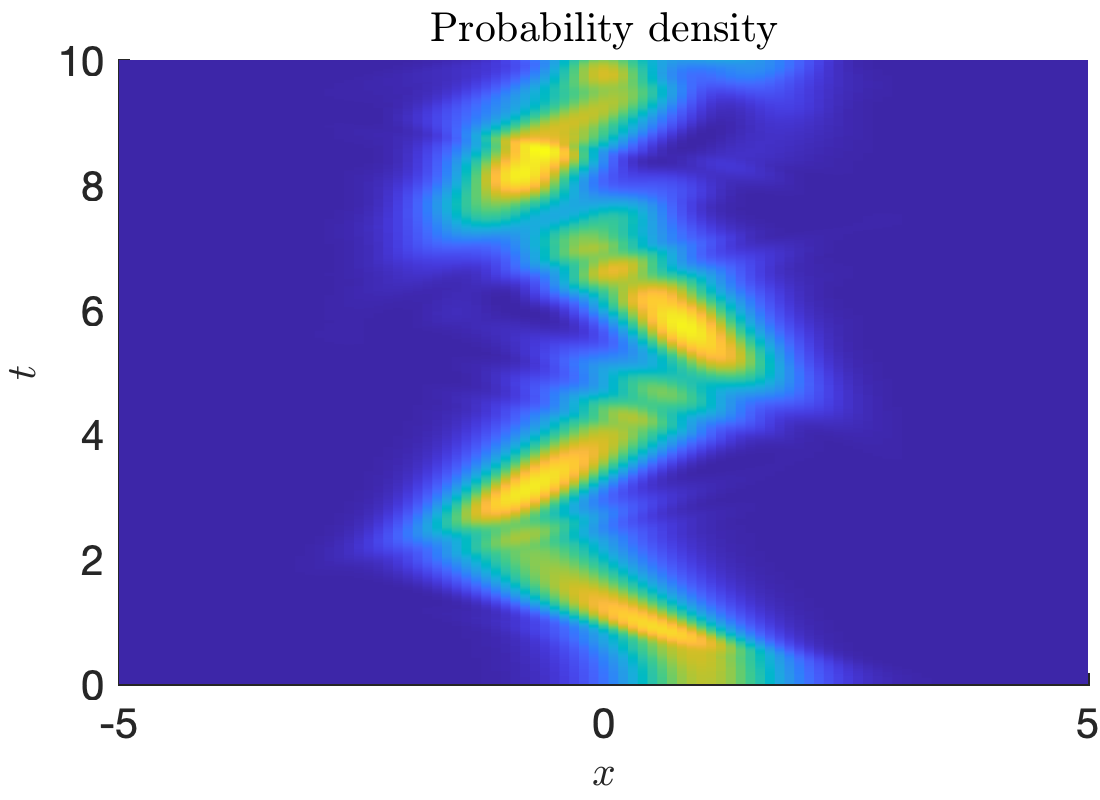}
    \caption{$L=5$}
    \end{subfigure}

    \caption{Evolution of a single Gaussian initial condition in a hyperbolic cosine potential using multi-Gaussian approximations. Step-size: $h=10^{-3}$.} \label{fig:cosh}
\end{figure}

\paragraph{Reproducibility: algorithmic parameters.}
All numerical experiments have been done using a unified Matlab code\footnote{\url{https://gitlab.lrz.de/00000000014C092A/discretize-then-and-parametrize-then}} admitting the following choices: (i) the discretization/parametrization approach, (ii) the weight $\zeta\in [0,1]$, (iii) the step-size $h>0$, (iv) the computation of the inner products using quadratures or closed-form formulas (if the potential is a polynomial).
The optimization part described in \Cref{sec:Optimization_process} has been done with Matlab's \texttt{fminunc} built-in function with a fixed maximum number of iterations \texttt{MaxIterations}=$200$ and a constant value $10^{-8}$ for the tolerances
\texttt{OptimalityTolerance}, \texttt{StepTolerance}, \texttt{FunctionTolerance}.
Unless explicitly stated, all the experiments shown in this section have been done using the following choices: (i) discretize-then approach, (ii) weight $\zeta=\frac12$, (iii) step-size $h=10^{-2}$, (iv) closed-form formulas for the polynomial potentials and quadratures for the non-polynomial one.


\section{Quantitative numerical validation}\label{sec:Num_Experiments_II}

The purpose of this section is to validate the predictions of Theorems~\ref{prop:discr-opt} and~\ref{prop:opt-discr}. 
as well as Proposition~\ref{prop:norm_energy}.
In particular, we investigate the dependence
of the global approximation error on the residual magnitude, the time-step size $h$, the parameter $\zeta$, and the accuracy of the
quadrature approximation used in the residual evaluation.  As quantitative measures we will use the value of the cost function at the optimized parameter $\theta_k$ for time step $t_k = kh$, $k\ge 0$, as well as the norm and the energy deviation,
\[
F_{k}(\theta_{k+1}) = \Vert r_{k}(\theta_{k+1},\cdot)\Vert^2,\quad \left|\Vert u_k\Vert -1\right|,\quad \left|E_k/E_0-1\right|,
\]
where $u_k = u(\theta_k,\cdot)$ and $E_k = \langle u_k,\hat{H}u_k\rangle_\hil$ is the energy associated with the Hamiltonian $\hat{H}=-\frac12\partial_x^2 + V(x)$. We consider maximal and averaged quantities, e.g.
\[
\max_{k\in K} F_{k}(\theta_{k+1}),\quad {\rm avg}_{k\in K} F_k(\theta_{k+1}) = \frac{1}{N} \sum_{k=1}^N F_{k}(\theta_{k+1})
\]
with $N=\lceil T/h \rceil$ the number of time steps $t_k$ until final time $T$ and $K = \{1,\ldots,N\}$. We will see that maximal and averaged quantities are comparable for all our experiments. 

\subsection{Influence of the manifold dimension}
Increasing the number of Gaussian basis functions enlarges the approximation manifold and therefore reduces the residual contribution appearing in the error estimates of Theorems~\ref{prop:discr-opt} and~\ref{prop:opt-discr}. \Cref{fig:L-comparison} illustrates this effect for two of the test cases: the double-well with initial condition $\psi_0(x)=\mathcal N_0 e^{- (x-2)^2}$ and the hyperbolic cosine potential with $\psi_0(x)=\mathcal N_0 e^{-\frac12 (x-1)^2}$. While a single Gaussian already captures the dominant localization of the wave packet, a small number of additional components improves the approximation of its finer structure. 

\begin{figure}[htb!]
    \centering
    
    \begin{subfigure}{0.45\textwidth}
    \includegraphics[width=\linewidth]{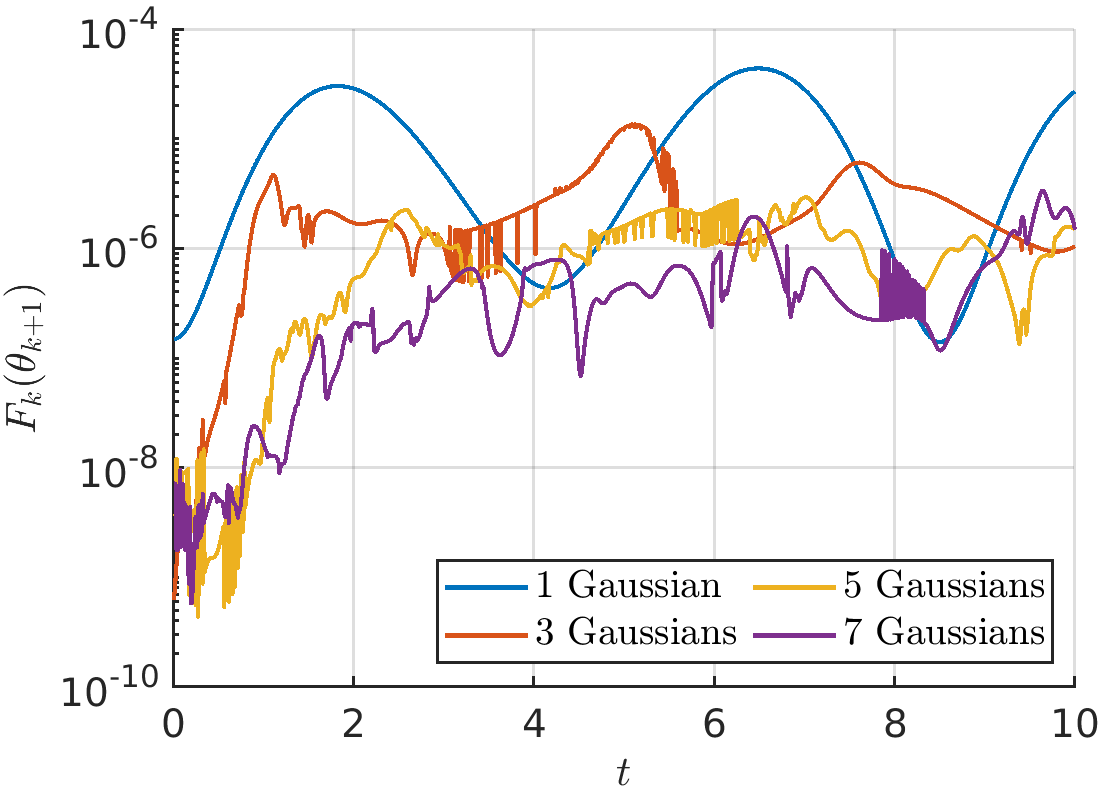}
    \caption{Double-well: cost function} \label{fig:L-comp-a}
    \end{subfigure}
\hfill
    \begin{subfigure}{0.45\textwidth}
    \includegraphics[width=\linewidth]{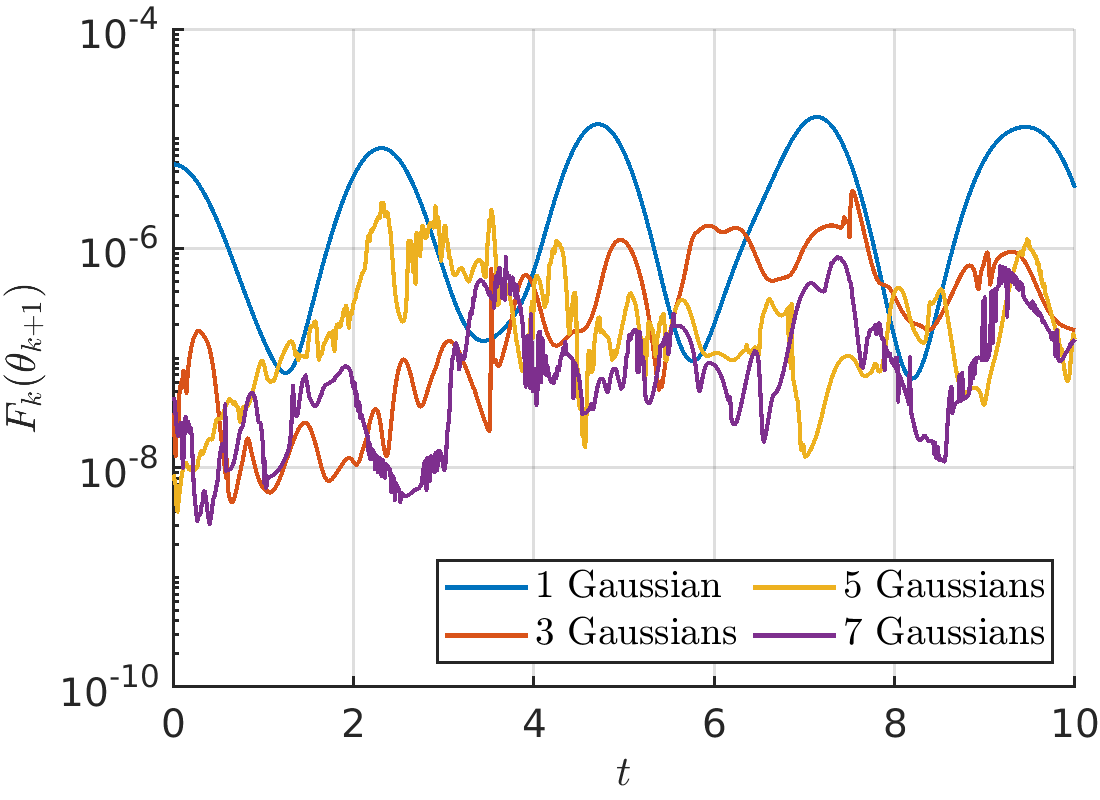 } 
    \caption{Cosh: cost function} \label{fig:L-comp-c}
    \end{subfigure}

    \vspace{0.5em}

    \begin{subfigure}{0.45\textwidth}
    \includegraphics[width=\linewidth]{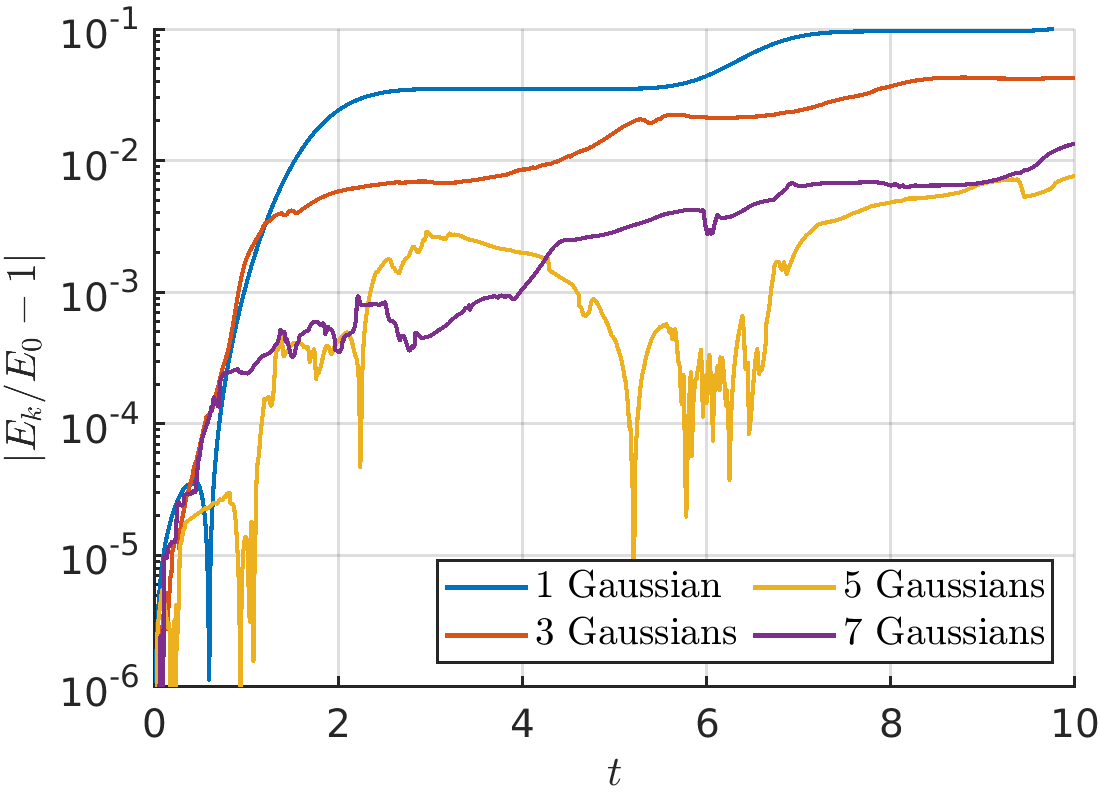}
    \caption{Double-well: energy deviation} \label{fig:L-comp-b}
    \end{subfigure} 
\hfill
    \begin{subfigure}{0.45\textwidth}
    \includegraphics[width=\linewidth]{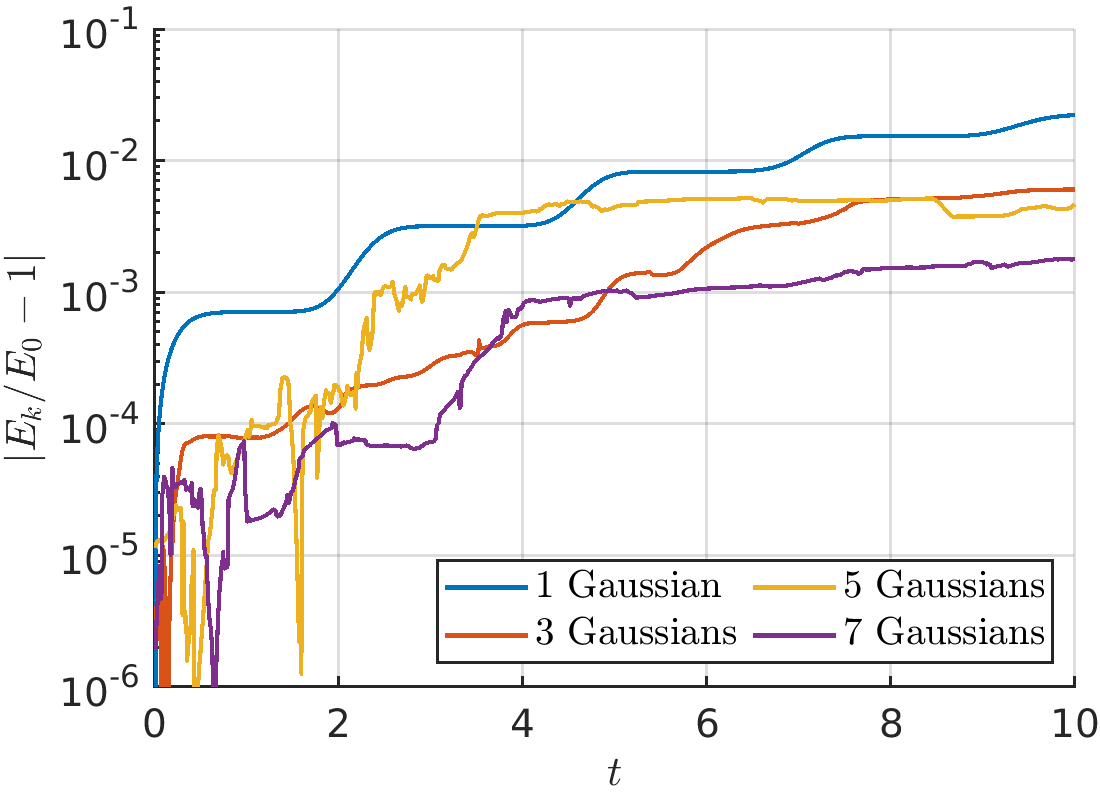} 
    \caption{Cosh: energy deviation} \label{fig:L-comp-d}
    \end{subfigure}
    
    \caption{Discretize--then approach. Comparison of the attained residual and energy deviation of the evolution of a single initial Gaussian condition in a double-well (left) and hyperbolic cosine (right) potential using $L=1,3,5,7$ Gaussian basis functions. 
    The plots in the top row show the squared residual norm $F_k(\theta_{k+1}) = \|r_k(\theta_{k+1},\cdot)\|^2$. Numerical parameters:
    $\zeta=\frac12$ and $h=10^{-2}$. }
    \label{fig:L-comparison}
\end{figure}

\subsection{Influence of the parameter $\zeta$}

The exact representability of the harmonic oscillator suggests that the midpoint choice $\zeta=\frac12$ is optimal whenever the residual vanishes. 
This situation occurs for the harmonic oscillator, where the Gaussian manifold is invariant under the exact dynamics. Numerical experiments readily confirm this prediction (results not shown). We now investigate whether the midpoint choice remains advantageous when the residual cannot be eliminated completely, which is the generic situation. For this, we consider the hyperbolic cosine and the double-well potential. For both experimental set-ups, the parameter $\zeta$ is sampled on a grid in $[0,1]$ with increased resolution near $\zeta=\frac12$. The final time is $T=2$ and the time step-size is fixed at $h=10^{-2}$.

\paragraph{Hyperbolic cosine potential.} The results for the hyperbolic cosine potential use the initial condition $\psi_0(x)=\mathcal{N}_0 e^{-\frac12 (x-1)^2}$ 
and work with a three-Gaussian approximation, $L=3$. They are shown in
\Cref{fig:zeta-cosh}. In contrast to the harmonic oscillator, the residual cannot be driven to zero; the (averaged) attained minima of the cost function are of order $10^{-7}$ for both residual formulations. However, we note that the cases $\zeta>0.5$ 
did not converge for the discretize-then formulation. There, a finer step-size than $h=10^{-2}$ is needed for convergence. 
Despite these comparable residual levels (for the cases that converge), the resulting norm and energy deviations exhibit a pronounced dependence on $\zeta$. 
In particular, the parametrize-then approach produces smaller norm and energy deviations for almost all values of $\zeta$ (except around $\zeta=\frac12$, where discretize-then is visibly better). For both the discretize-then and the parametrize-then formulations, the smallest deviations are attained for values of $\zeta$ close to $\frac12$. In the discretize-then formulation, the minimum is achieved at $\zeta=\frac12$, whereas for the parametrize-then formulation it occurs at a nearby value. 

\begin{figure}[htb!]
  \centering

  \begin{subfigure}{0.45\textwidth}
    \includegraphics[width=\linewidth]{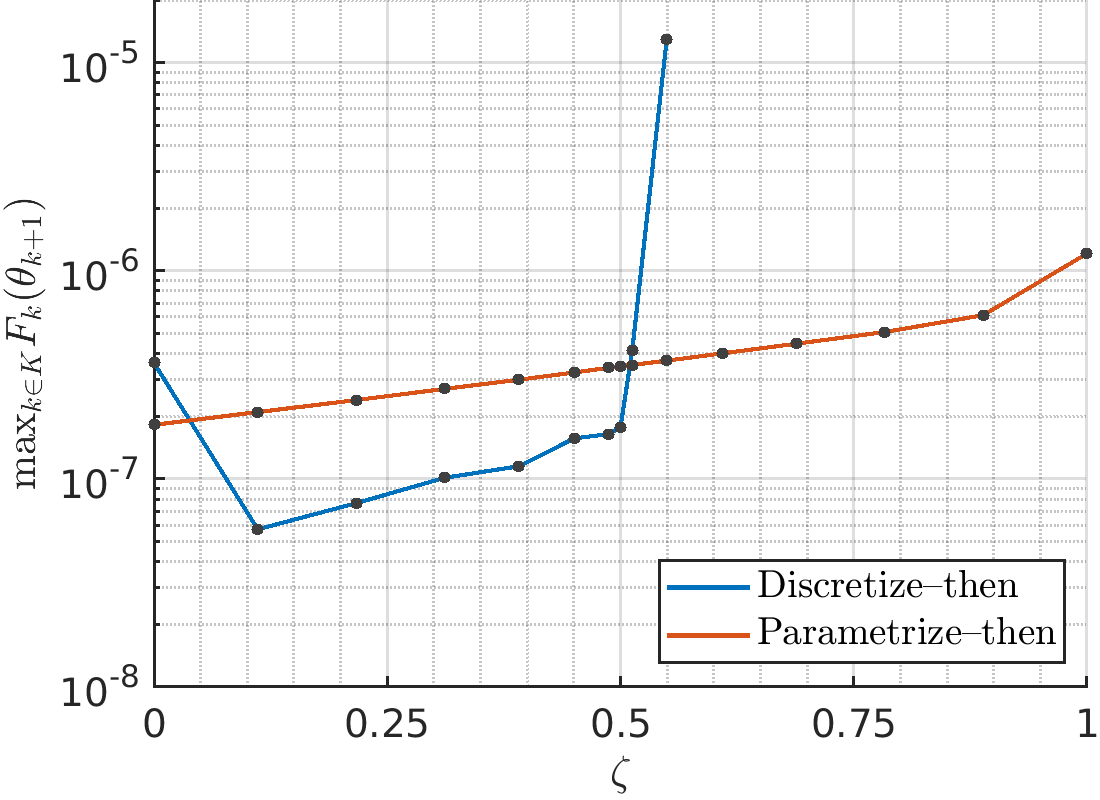} 
    \caption{Maximal cost function}
  \end{subfigure}
  \hfill
  \begin{subfigure}{0.45\textwidth}
    \includegraphics[width=\linewidth]{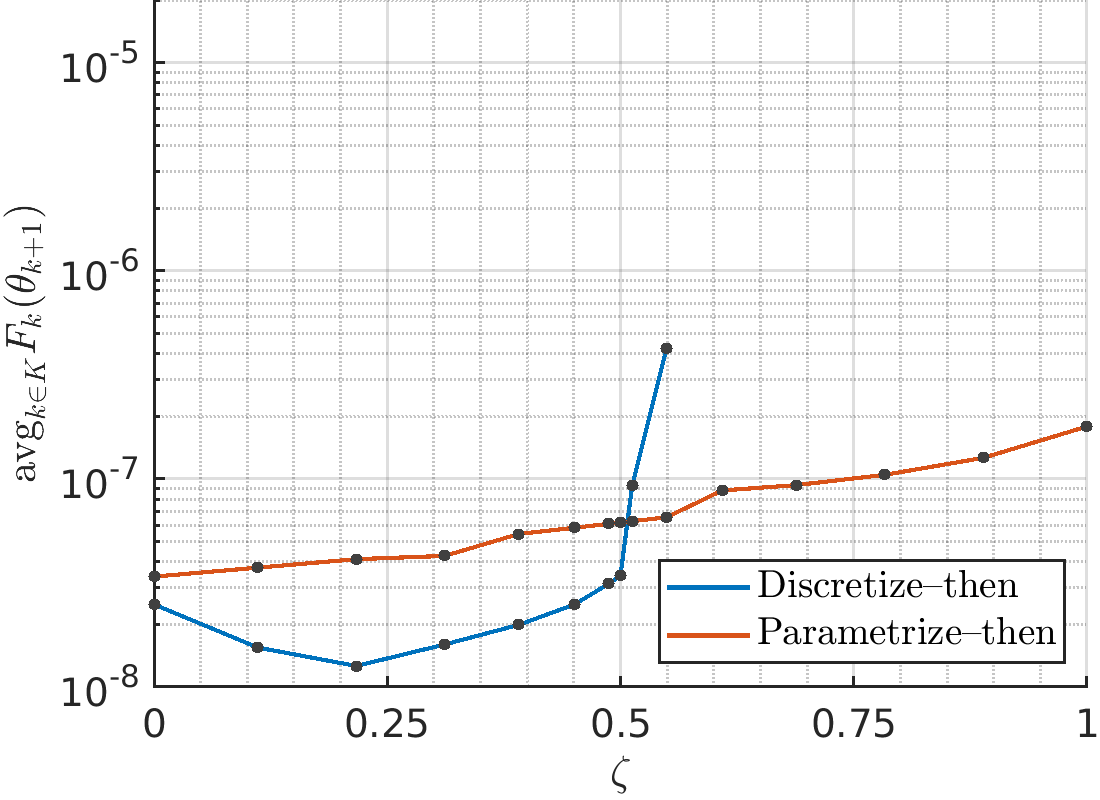}
    \caption{Averaged cost function}
  \end{subfigure}

  \vspace{0.5em}

  \begin{subfigure}{0.45\textwidth}
    \includegraphics[width=\linewidth]{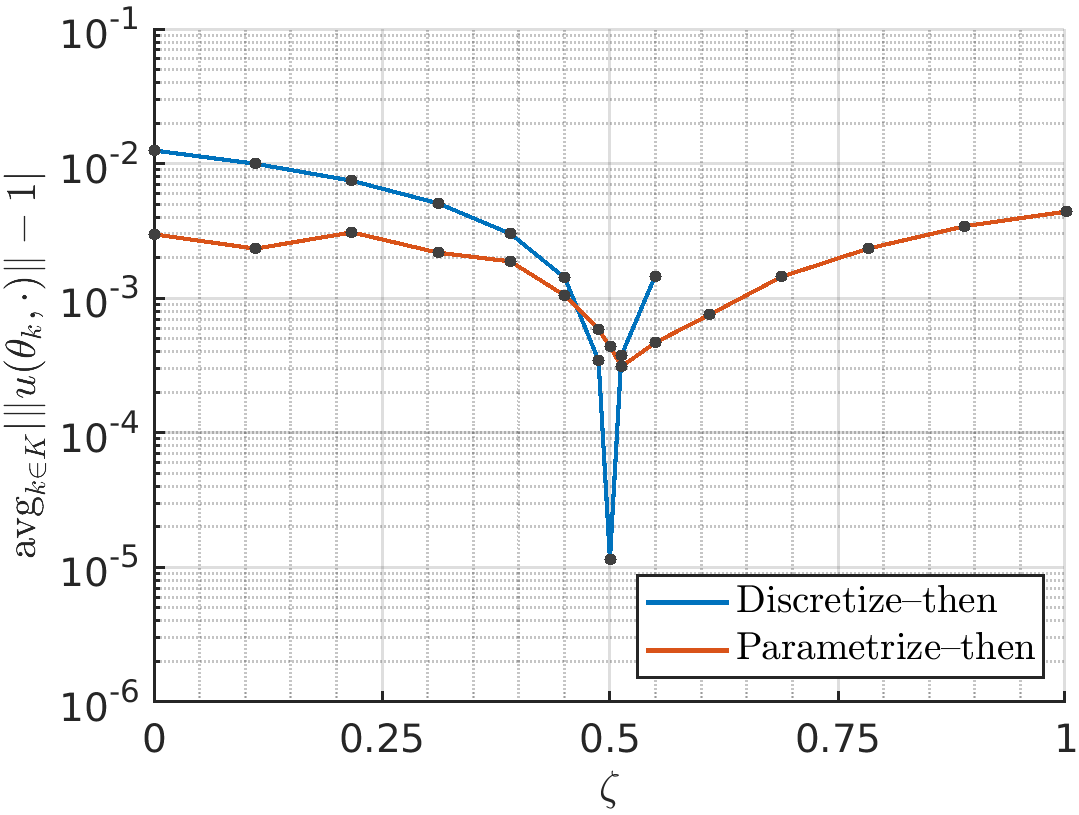}
    \caption{Average norm deviation}
  \end{subfigure}
  \hfill
  \begin{subfigure}{0.45\textwidth}
   \includegraphics[width=\linewidth]{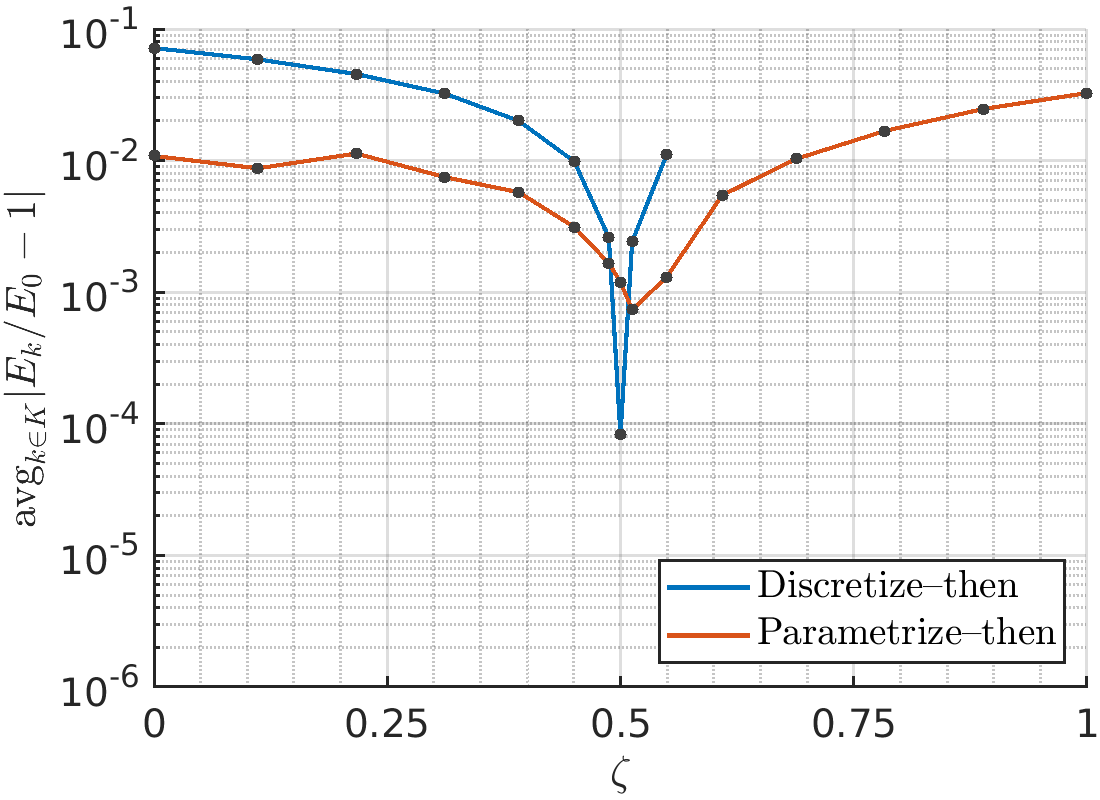}
    \caption{Average energy deviation}
  \end{subfigure}
  \caption{Hyperbolic cosine potential with one initial Gaussian. The approximate solution $u(\theta_k,\cdot)$ is built from  $L=3$ Gaussians, whose $\theta$--parameters are optimized for each time step $t_k$. The four plots show the outcome for the maximal and averaged cost function $F_k = \|r_k(\theta_{k+1},\cdot)\|^2$ as well as the average norm and energy deviation as a function of $\zeta\in[0,1]$.  }
  \label{fig:zeta-cosh}
\end{figure}

\paragraph{Double-well potential.}
In \Cref{fig:zeta-dw}, we repeat the same experiment for the double-well potential and the initial condition $\psi_0(x)=\mathcal{N}_0 e^{-(x-2)^2}$, which is centered at the bottom of the right well. Since a single Gaussian 
will not properly resolve the dynamics, we choose $L=3$ for the approximation. 
It can be observed that the attained values of the cost function are around $10^{-6}$ and $10^{-7}$.
As in the preceding example, the discretize-then approach will not converge in the vicinity of $\zeta=1$ for the chosen step-size $h=10^{-2}$. A smaller step-size is needed in such cases. 
We also observe that the optimal choice for $\zeta$ is again $\zeta\approx \frac12$. 
However, unlike the hyperbolic cosine example, the parametrize-then formulation does not perform better than discretize-then in all situations away from the midpoint. 
In this example, the discretize-then approach produces slightly smaller norm deviations for most values of $\zeta$ (except around $\zeta=\frac12$), but parametrize-then produces smaller energy deviations. 

\begin{figure}[htb!]
  \centering
  \begin{subfigure}{0.45\textwidth}
    \includegraphics[width=\linewidth]{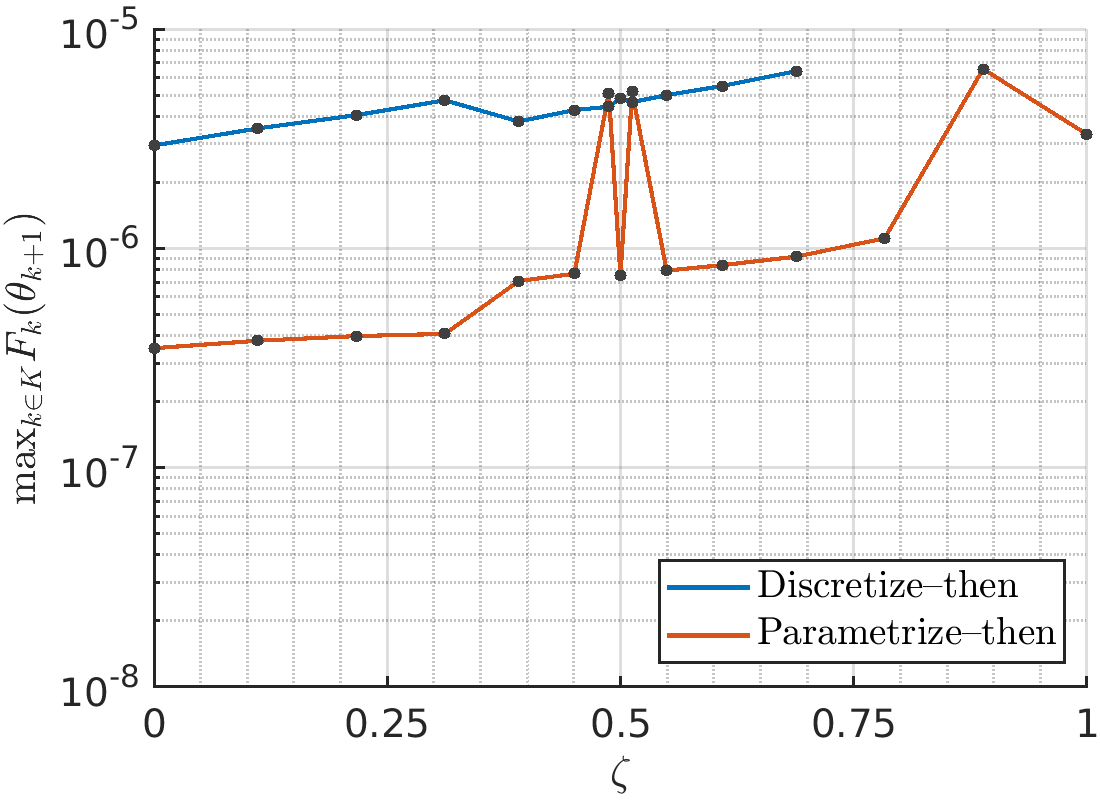} 
    \caption{Maximal cost function}
  \end{subfigure}
  \hfill
  \begin{subfigure}{0.45\textwidth}
    \includegraphics[width=\linewidth]{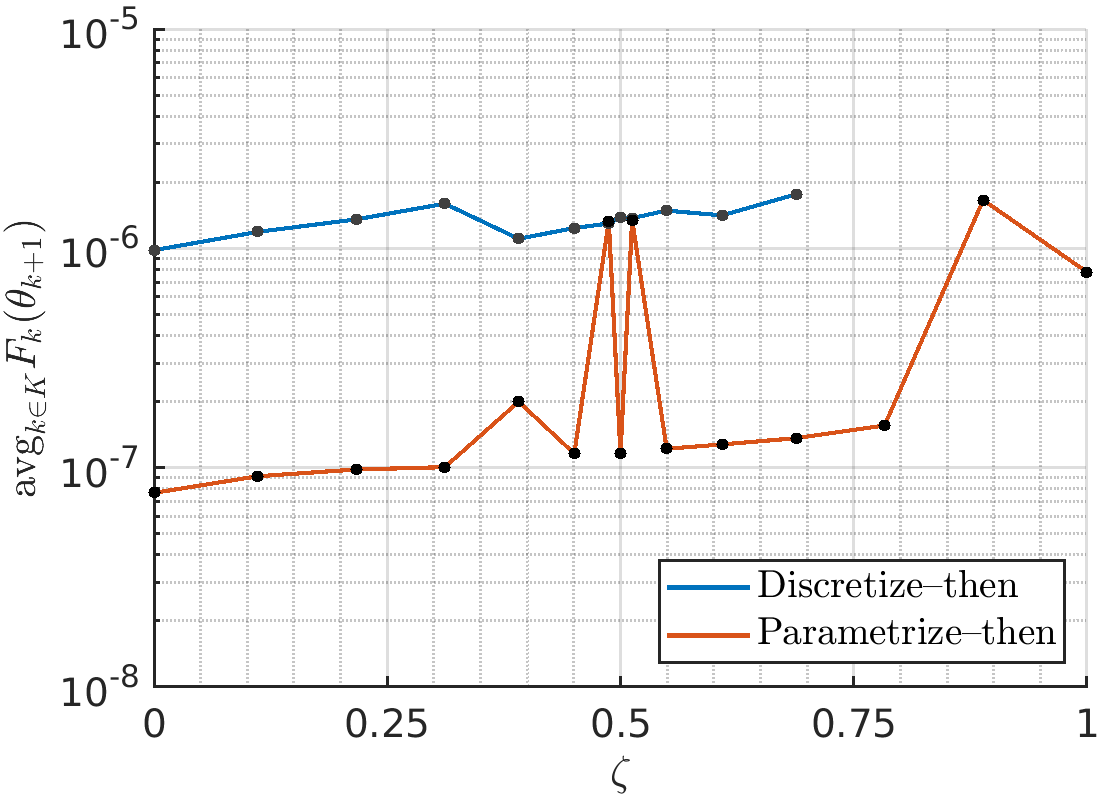}
    \caption{Average cost function}
  \end{subfigure}

  \vspace{0.5em}

  \begin{subfigure}{0.45\textwidth}
    \includegraphics[width=\linewidth]{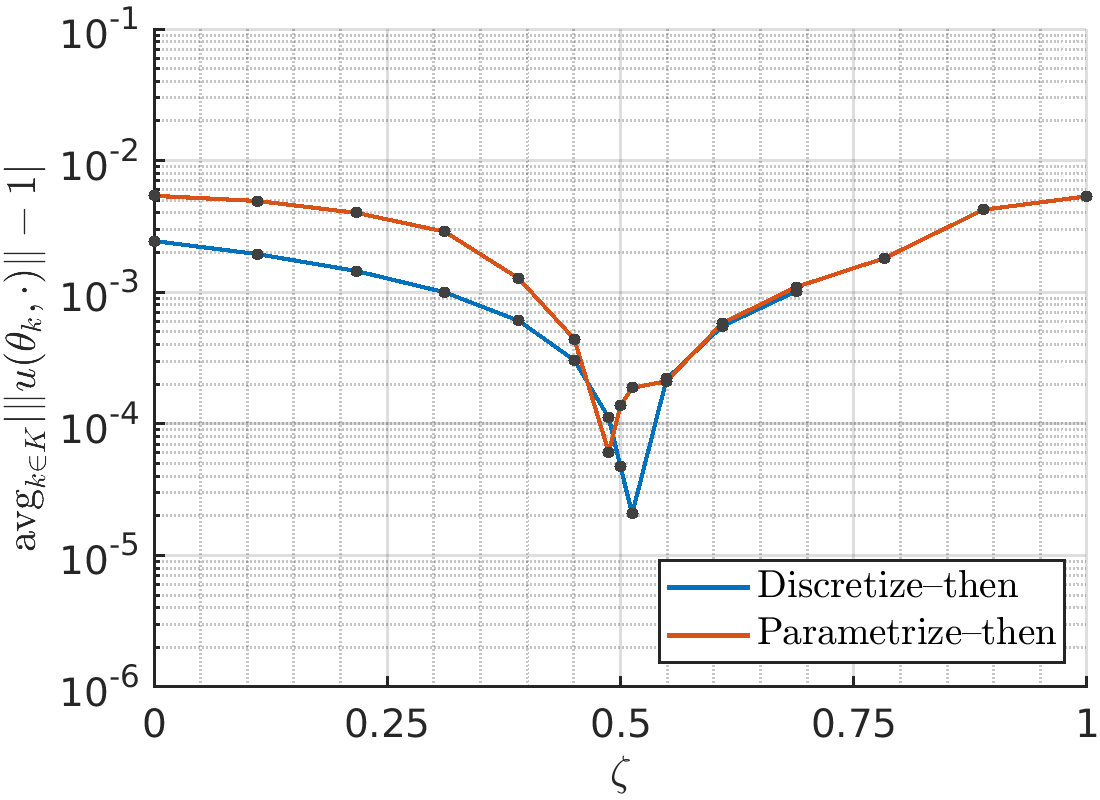}
    \caption{Average norm deviation}
  \end{subfigure}
  \hfill
  \begin{subfigure}{0.45\textwidth}
   \includegraphics[width=\linewidth]{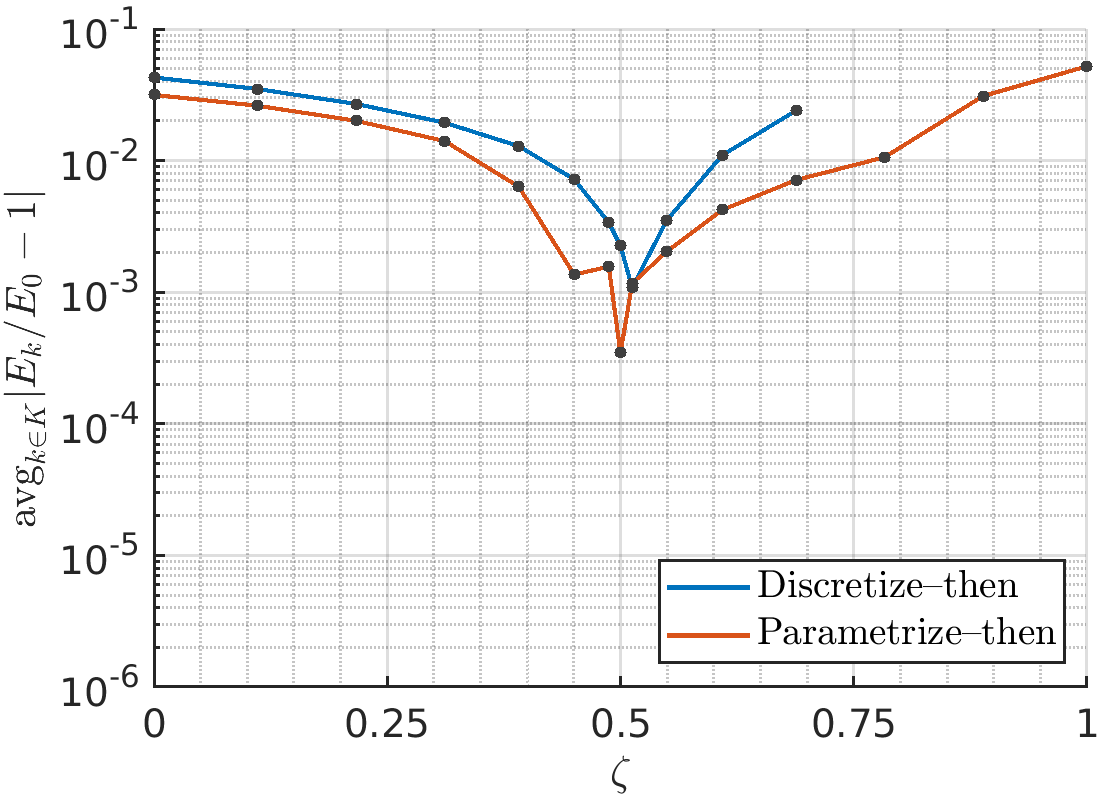}
    \caption{Average energy deviation}  
  \end{subfigure}

  \caption{Double-well potential with one initial Gaussian. The approximate solution $u(\theta_k,\cdot)$ is built from  $L=3$ Gaussians, whose $\theta$--parameters are optimized for each time step $t_k$. The plots show the time-averaged cost function $F_k = \|r_k(\theta_{k+1},\cdot)\|^2$ as well as the averaged norm and energy deviation as a function of $\zeta\in[0,1]$.  } \label{fig:zeta-dw}
\end{figure}

\paragraph{Conclusion.} In agreement with our theoretical findings, choosing $\zeta\approx \frac12$ is the preferred option for both schemes (discretize-then/parametrize-then), even in situations in which the cost function cannot be minimized to zero.
Moreover, the examples above show situations in which either of the schemes has a better performance. 
In particular, it seems that discretize-then is the optimal choice when $\zeta=\frac12$, whereas parametrize-then is the best choice when $\zeta=1$.

\subsection{Convergence with respect to the step-size}

We next investigate the dependence of the approximation error on the
time-step size $h$. According to Theorems~\ref{prop:discr-opt}
and~\ref{prop:opt-discr}, the global error consists of two
contributions: a time-discretization error determined by the chosen
$\zeta$-scheme and a residual contribution associated with the
restriction to the approximation manifold. The purpose of the present
experiment is to illustrate how these two contributions interact.

For the harmonic oscillator, the Gaussian manifold is invariant under
the exact dynamics. Consequently, the residual can be reduced to
machine precision, provided that the step-size is sufficiently small,
and the observed error is governed essentially by the time
discretization. In this residual-free regime, the expected orders are
visible: Euler-type choices of $\zeta$ lead to first-order convergence,
whereas the midpoint choice $\zeta=\frac12$ yields second-order
convergence (no numerical results presented here).

\begin{figure}[htb!]
    \begin{subfigure}{0.45\textwidth}
    \includegraphics[width=\linewidth]{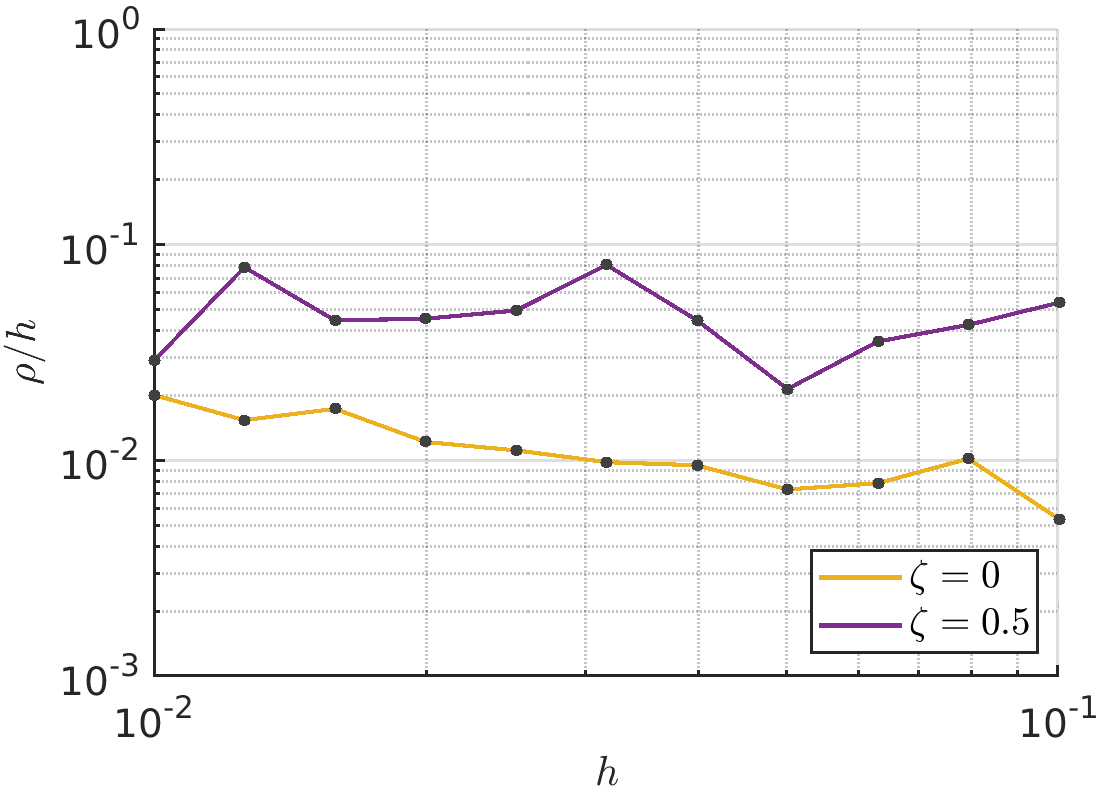}
    \caption{Discretize--then} 
    \end{subfigure}
\hfill
    \begin{subfigure}{0.45\textwidth}
    \includegraphics[width=\linewidth]{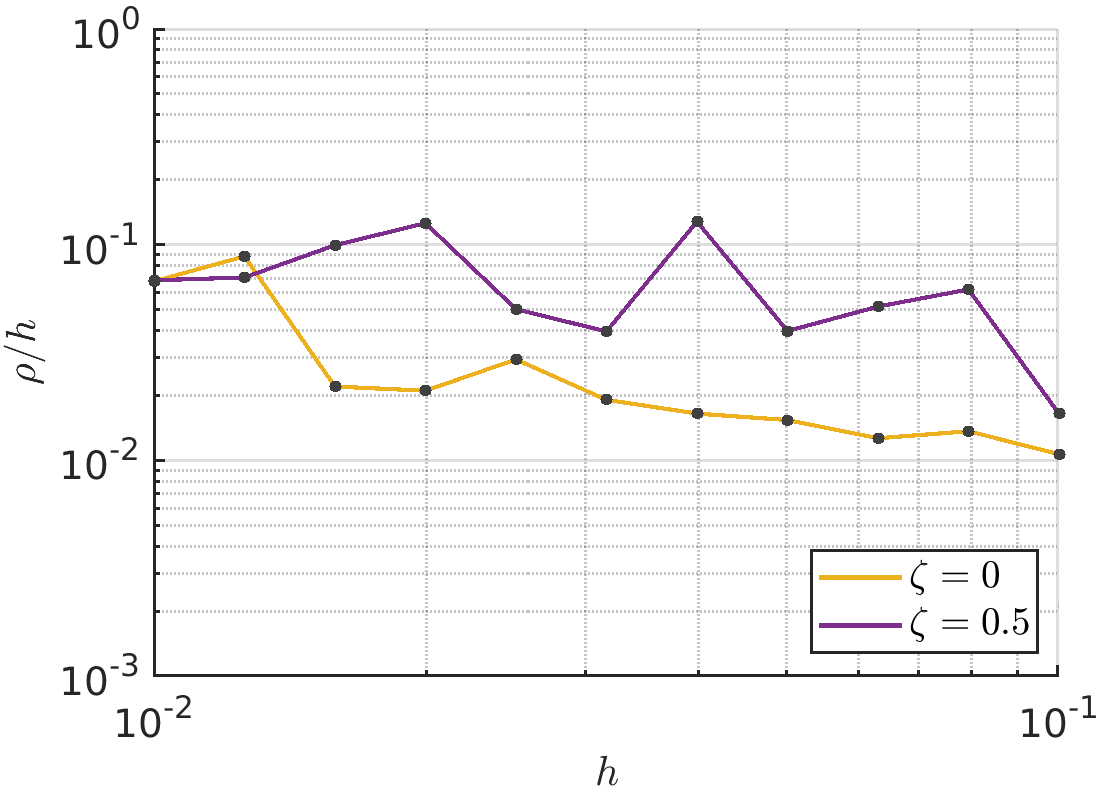}
    \caption{Parametrize--then}
    \end{subfigure}

    \vspace{0.5em}
    
    \begin{subfigure}{0.45\textwidth}
    \includegraphics[width=\linewidth]{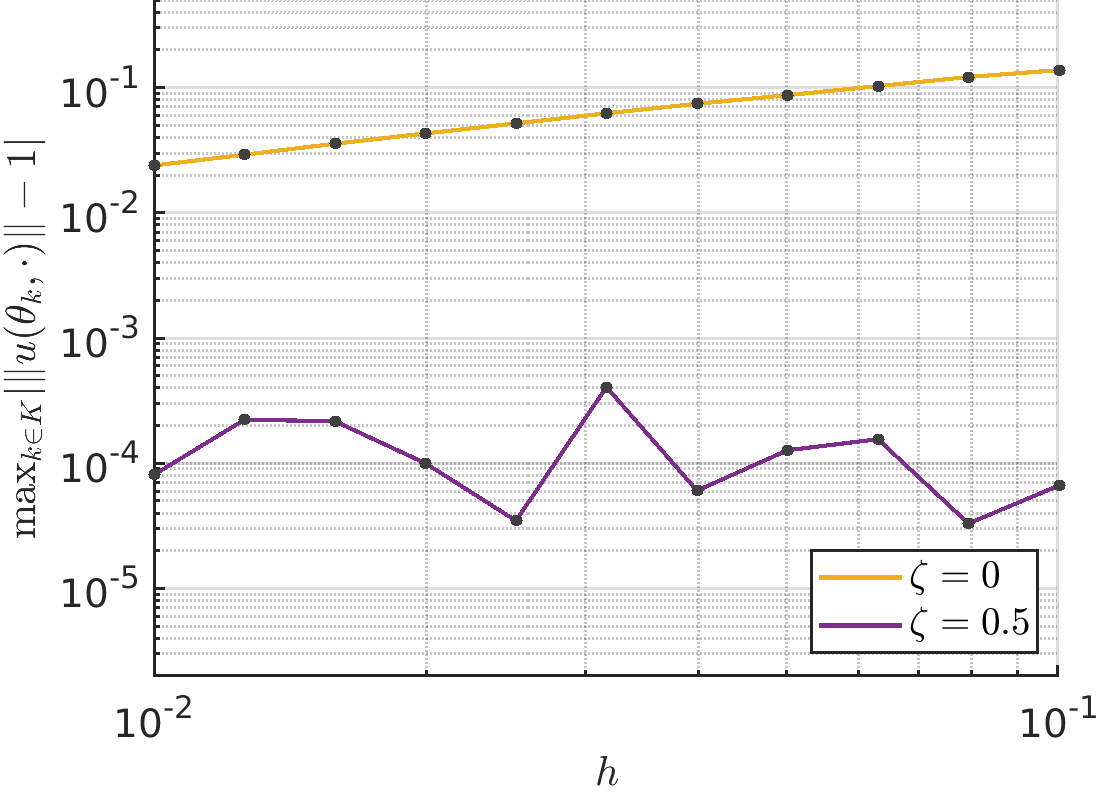}
    \caption{Discretize--then} 
    \end{subfigure}
\hfill
    \begin{subfigure}{0.45\textwidth}
    \includegraphics[width=\linewidth]{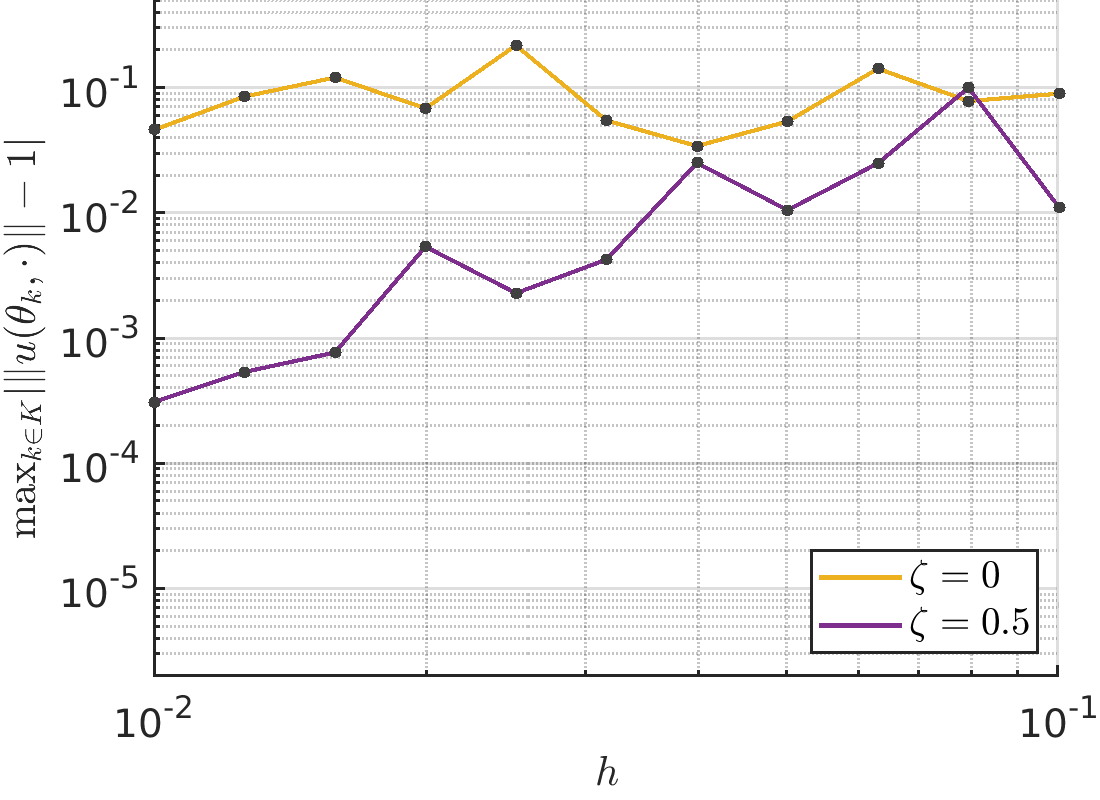}
    \caption{Parametrize--then}
    \end{subfigure}
    \caption{Hyperbolic cosine potential: scaled maximal residual and norm deviation (maximum) in dependence of the step-size $h$. Only the cases $\zeta=0$ and $\zeta= \frac12$ are shown. This example corresponds to a seven-Gaussian approximation ($L=7$).} \label{fig:convergence_wrt_h_cosh}
\end{figure}

The situation changes when the exact dynamics are no longer contained
in the Gaussian manifold. To illustrate this effect, we consider the 
hyperbolic cosine potential, for which the residual contribution in the
error estimates does not vanish. Figure~\ref{fig:convergence_wrt_h_cosh} shows 
the scaled residual $\rho/h = \max_{k\in K} \sqrt{ F_k(\theta_{k+1})}/h$ and
the maximal norm deviation
$\Delta u_{\max}:=
\max_{k\in K}
\bigl|\Vert u(\theta_k,\cdot)\Vert-1\bigr|$ 
as a function of the step-size $h$ for the discretize-then and
parametrize-then formulations. The computations use step-sizes
$h\in[10^{-2},10^{-1}]$ with equidistant values of $\log h$, final time
$T=2$, and Gaussian mixtures with $L=1,3,5,7$ components. Table~\ref{table:norm_wrt_h}
reports the corresponding regression parameters for the models
\[
\Delta u_{\max} \approx C_u h^{p_u}.
\]
The results show the expected first-order behavior for the implicit Euler 
case $\zeta=0$. The fitted exponents are close to one in most cases,
indicating that the contribution of the time discretization decreases
linearly with $h$. For $\zeta=\frac12$, the interpretation is more
delicate. The midpoint rule reduces the discretization error, but the
residual contribution does not vanish. Once this residual contribution
becomes comparable to, or larger than, the time-discretization error,
the second-order regime is partially masked. This explains why the
observed slopes do not always display a clean quadratic rate, even
though the midpoint rule still produces substantially smaller norm
deviations.

This behavior is particularly visible in the discretize-then
formulation, where for $\zeta=\frac12$ the norm deviation reaches its
minimal level already for relatively large step-sizes. Further
decreasing $h$ then produces little improvement, because the dominant
error is no longer the time-discretization error but the residual
contribution associated with the Gaussian manifold. In contrast, the
parametrize-then formulation shows a clearer decrease in some cases,
most notably for $L=7$, where the fitted exponent is close to two.
This indicates that, when the residual is sufficiently well controlled,
the second-order behavior of the midpoint rule can become visible.

\paragraph{Conclusion.} Overall, Figure~\ref{fig:convergence_wrt_h_cosh} and Table~\ref{table:norm_wrt_h}
are consistent with the error decomposition predicted by the theory.
A reduction of the step-size improves the approximation only until the residual
contribution becomes dominant. The midpoint choice $\zeta=\frac12$
remains advantageous, not necessarily because it always improves the
observed asymptotic order, but because it consistently reduces the
error constants and yields smaller norm deviations over the tested
range of step-sizes. Similar observations apply for the double-well dynamics (numerical experiments not presented here). 

\begin{table}[t]
\centering
\caption{Results for the hyperbolic cosine potential $V(x)=\cosh x -1$. Maximal residual
$\rho:=\max_{k\in K} \sqrt{ F_k(\theta_{k+1})}$ and linear regression for the maximal norm deviation
$\Delta u_{\rm max}:={\rm max}_{k\in K} |\Vert u(\theta_k,\cdot)\Vert-1|$.
The fitted models are
$\Delta u_{\rm max} \approx C_u h^{p_u}$.
The data are obtained from the log--log plots in \Cref{fig:convergence_wrt_h_cosh} for experiments with $L=1,3,5,7$ Gaussians.} \label{table:norm_wrt_h}
\renewcommand{\arraystretch}{1.15}
\begin{tabular}{ll|cc|cc}
\toprule
& & \multicolumn{2}{c|}{\textbf{Discretize--then}}
& \multicolumn{2}{c}{\textbf{Parametrize--then}} \\
\cline{3-6}
Problem & $\zeta$
& $\delta/h$
& $(C_u,p_u)$
& $\delta/h$
& $(C_u,p_u)$ \\
\midrule

\multirow{2}{*}{$L=1$}
& $0$
& $ 0.2248 $
& $( 0.8748 ,\; 0.7694)$
& $ 0.2379 $
& $( 0.6408 ,\; 0.8599)$
\\

& $\frac12$
& $ 0.2344 $
& $( 0.0122 ,\; 0.9291)$
& $ 0.2369 $
& $( 0.0667 ,\; 1.3296)$
\\
\midrule

\multirow{2}{*}{$L=3$}
& $0$
& $ 0.0124 $
& $( 0.8582 ,\; 0.7702 )$
& $ 0.0349 $
& $( 2.5724 ,\; 1.0896 )$
\\

& $\frac12$
& $ 0.0429 $
& $( 0.0033 ,\; 1.1364 )$
& $ 0.0483 $
& $( 1.4696 ,\; 1.6047 )$
\\
\midrule

\multirow{2}{*}{$L=5$}
& $0$
& $ 0.0109 $
& $( 0.8542 ,\; 0.7681 )$
& $ 0.0358 $
& $( 0.0918 ,\; 0.1535 )$
\\

& $\frac12$
& $ 0.0395 $
& $( 0.0008 ,\; 0.5993 )$
& $ 0.0542 $
& $( 1.0311 ,\; 1.4570 )$
\\
\midrule

\multirow{2}{*}{$L=7$}
& $0$
& $ 0.0107 $
& $( 0.8512 ,\; 0.7670 )$
& $ 0.0224 $
& $( 0.0917 ,\; 0.0459 )$
\\

& $\frac12$
& $ 0.0447 $
& $( 3\cdot 10^{-5} ,\; -0.3389 )$
& $ 0.0593 $
& $( 6.8412 ,\; 2.0920 )$
\\
\bottomrule
\end{tabular}
\end{table}

\subsection{Influence of quadrature accuracy}\label{sec:int_quad}
We compare the obtained results when the residual function is computed with closed-form formulas (only possible if the potential has a special form, for example, polynomial) and those obtained with Gauss--Hermite quadrature as described in \Cref{sec:GH}.
We choose the rule with $N=10$ nodes, which is exact for integrands that are a polynomial times a Gaussian for polynomial degree $\le 2N-1$. However, this assumes that the nodes and weights are exact, but in practice, they can only be obtained numerically. The residual evaluation involves repeated optimization steps and repeated quadrature evaluations. Small inaccuracies are therefore propagated through the nonlinear optimization process and accumulate over time.

In the harmonic oscillator example ( \Cref{fig:ex-vs-quad-a,fig:ex-vs-quad-b}), the quadrature and exact evaluation results are almost indistinguishable throughout the simulation interval.
In the double-well example (\Cref{fig:ex-vs-quad-e,fig:ex-vs-quad-f}), small discrepancies appear rather early and slightly accumulate over time. 
All double-well results 
reflect the greater sensitivity of the generic case, when the approximation manifold is not invariant under the dynamics.

These experiments indicate that, in one spatial dimension, a $10$-point 
Gauss--Hermite quadrature is sufficiently accurate to reproduce both the residual evolution and the conservation properties obtained with exact Gaussian formulas. Nevertheless, exact formulas remain preferable whenever available, since they eliminate quadrature error entirely and become increasingly advantageous in higher-dimensional settings.

\begin{figure}[htb!]
    \centering
    
    \begin{subfigure}{0.45\textwidth}
    \includegraphics[width=\linewidth]
    {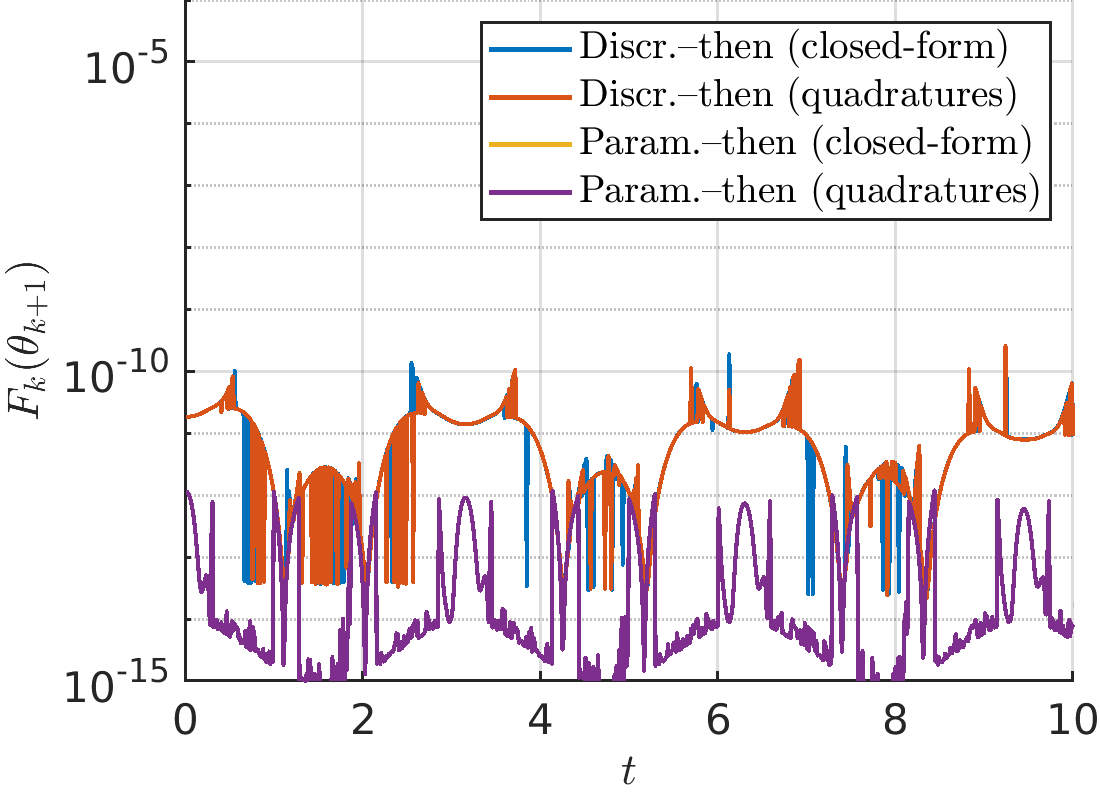}
    \caption{Harmonic oscillator: cost function} \label{fig:ex-vs-quad-a}
    \end{subfigure}
\hfill
    \begin{subfigure}{0.45\textwidth}
    \includegraphics[width=\linewidth]{{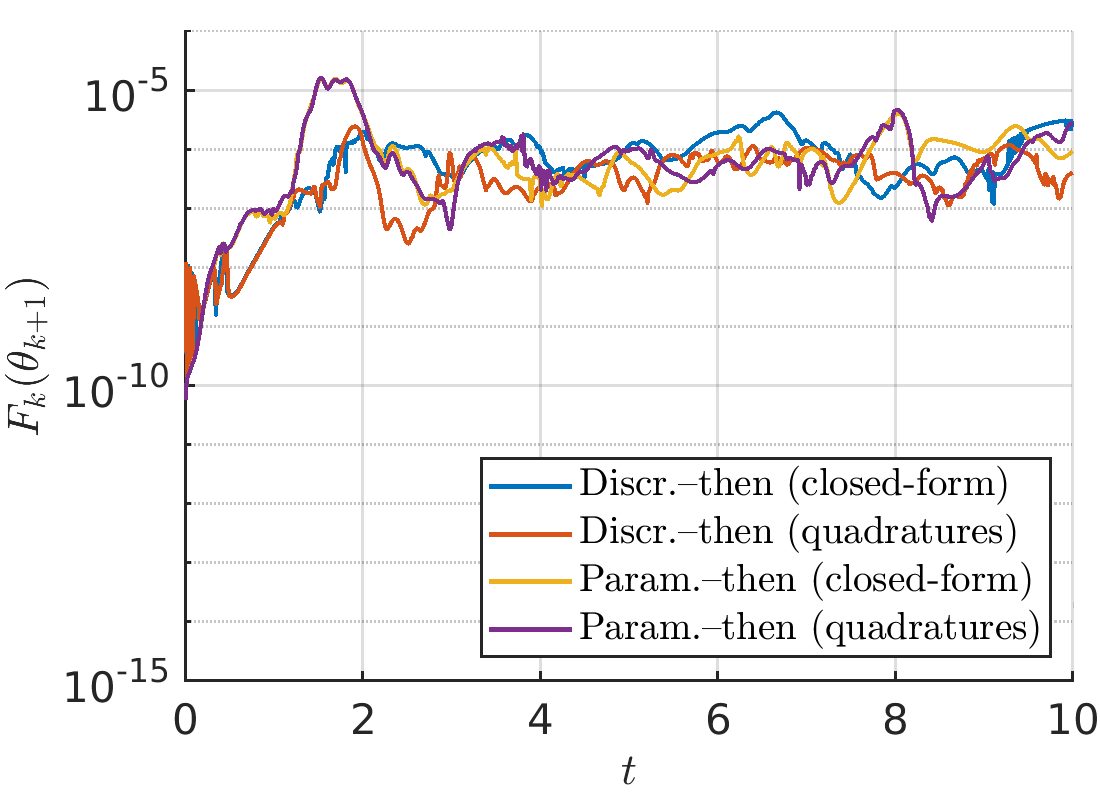} } 
    \caption{Double-well: cost function} \label{fig:ex-vs-quad-e}
    \end{subfigure}

    \vspace{0.5em}

    \begin{subfigure}{0.45\textwidth}
    \includegraphics[width=\linewidth]{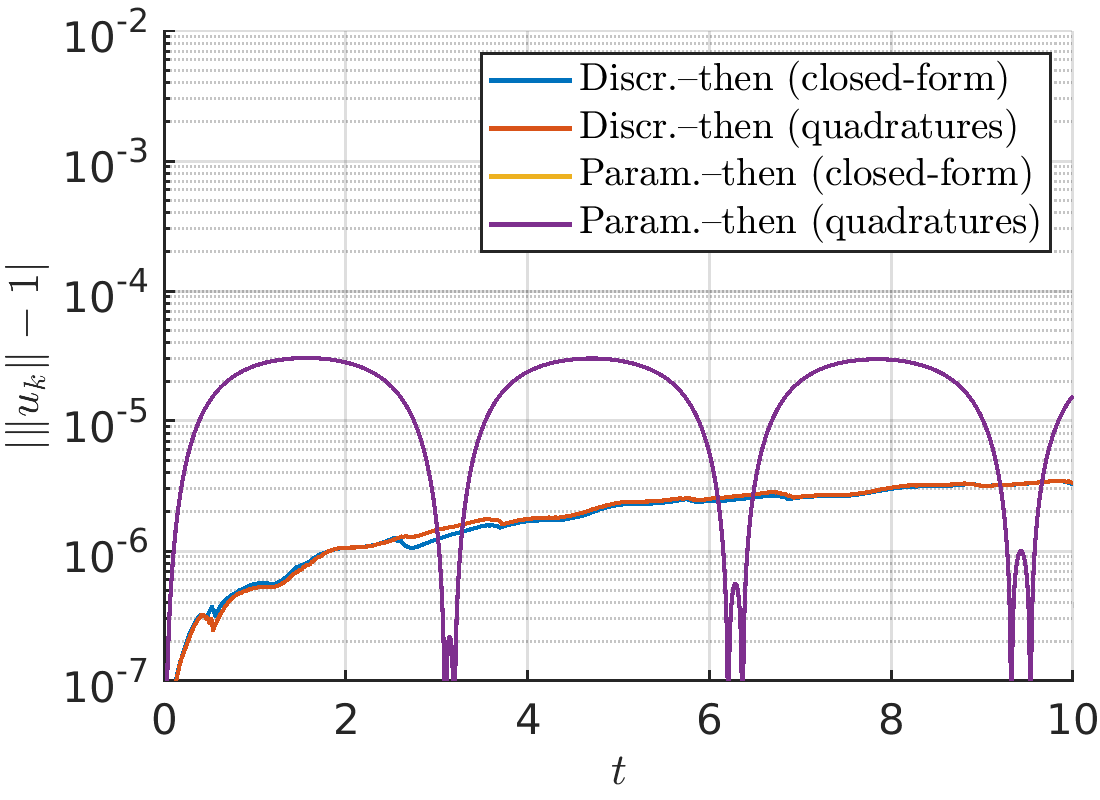}
    \caption{Harmonic oscillator: norm deviation} \label{fig:ex-vs-quad-b}
    \end{subfigure}
\hfill
    \begin{subfigure}{0.45\textwidth}
    \includegraphics[width=\linewidth]{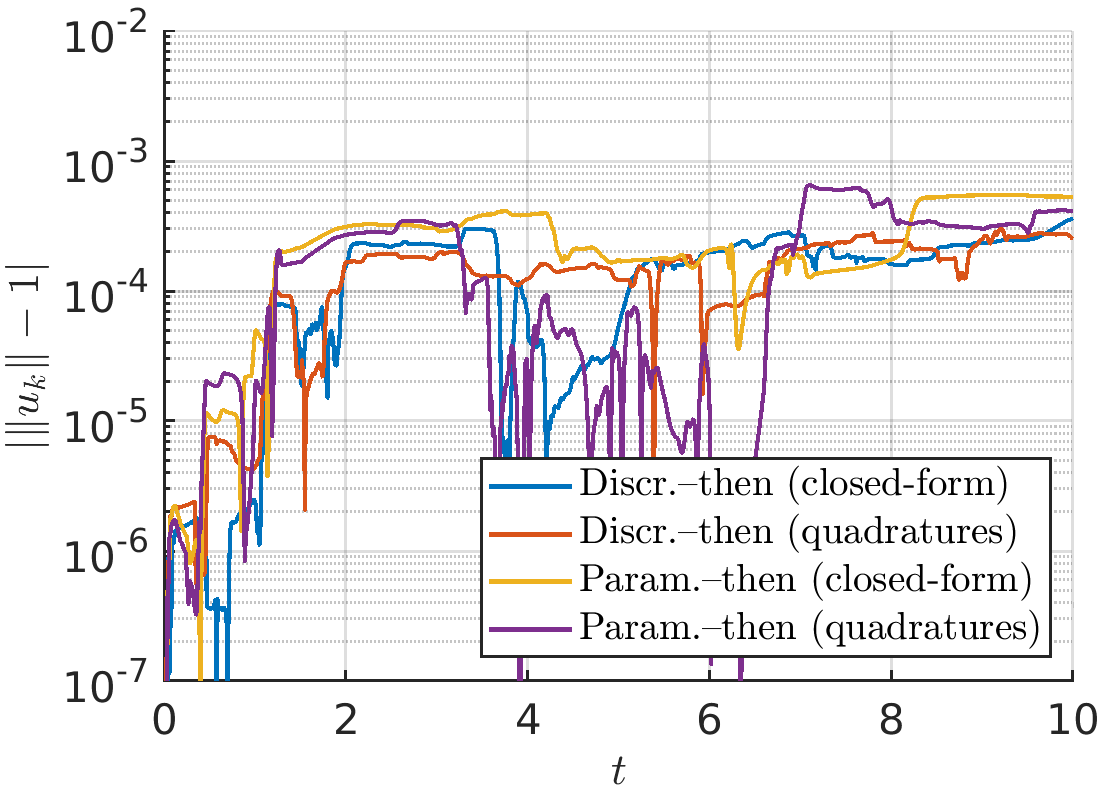} 
    \caption{Double-well: norm deviation} \label{fig:ex-vs-quad-f}
    \end{subfigure}
    
    \caption{Attained squared residuals $F_k(\theta_{k+1}) = \|r_k(\theta_{k+1},\cdot)\|^2$ and the norm deviation in dependence of time for: (i) shifted second excited state of the harmonic oscillator potential with a three-Gaussian approximation (left column), (ii) six-Gaussian approximation in a double-well potential (right column). Numerical parameters: $\zeta=\frac12$ and $h=10^{-2}$. } \label{fig:exact-vs-quadratures}
\end{figure}

\section{Conclusions}\label{sec:concl}

We have presented a unified analytical framework for time-discrete parametric approximations of evolution equations in Hilbert spaces based on residual minimization for nonlinear approximation manifolds. The formulation encompasses two natural paradigms: discretization followed by parametrization of the evolution equation (Rothe approach), and discretization of the variational dynamics induced by the Dirac--Frenkel principle. Both strategies are expressed in terms of residual norms whose minimization defines the time-stepping scheme in parameter space.

The main theoretical contribution is a unified error analysis for the full family of $\zeta$-methods, extending previous results in \cite{ZCVP} and complementing those for regularized parametric approximation \cite{FLL,LN}. The resulting bounds separate the effects of time discretization from those arising by residual minimization. This structure is independent of the particular form of the underlying evolution equation and relies only on general Lipschitz-type or dissipativity properties. For the parametrize-then formulation, additional conditioning requirements on the parametrization appear naturally and explain stability constraints encountered in nonlinear manifold approximations.
For the discretize-then approach with implicit Euler, the condition can be relaxed to a one-sided Lipschitz condition. Discretize-then with $\zeta<\frac12$ and a one-sided Lipschitz condition with $\ell=0$ allows proving analogous bounds.

While Gaussian approximation manifolds provide a concrete and computationally attractive realization of the abstract framework, the analysis itself is not tied to any particular functional form. The residual-based viewpoint applies equally to a broad range of nonlinear parametrizations, including reduced-basis spaces, adaptive dictionaries, and data-driven manifolds. The numerical experiments for time-dependent Schr\"odinger equations serve primarily as illustrative examples of the general theory. They confirm the predicted convergence orders and highlight the role of residual accuracy. The Fokker--Planck example presented in Appendix~~\ref{sec:FP_eq} further illustrates that the residual-based approximation is not tied to
Hamiltonian dynamics and can be applied to dissipative evolution equations as well.

The flexibility of the framework also suggests several directions for future research. In particular, adaptive approximation manifolds appear especially promising from a computational perspective. Rather than working with a fixed approximation space, one may allow the manifold to evolve together with the solution, enriching or simplifying the representation whenever required by the local approximation quality. In the Gaussian setting, this corresponds to dynamically adjusting the number of basis functions during the computation. More generally, the residual-based formulation developed here may provide a natural foundation for the analysis of adaptive parametrizations in a variety of reduced-order and data-driven approximation methods.

\section*{Acknowledgements}
Funded by the Deutsche Forschungsgemeinschaft (DFG, German Research Foundation) -- TRR~352 -- Project-ID 470903074.

\section*{Data Availability}
All code used for the qualitative and quantitative numerical experiments is openly available at \url{https://gitlab.lrz.de/00000000014C092A/discretize-then-and-parametrize-then}.

\appendix
\section{Local error estimate for $\zeta$-methods}

\begin{lemma}[Local error of $\zeta$-methods]\label{lem:local_error}
The local error
\[
 e_k = \psi(t_{k+1}) - \psi(t_k) - h \left( \zeta f(t_k,\cdot,\psi(t_k)) + \hat\zeta f(t_{k+1},\cdot,\psi(t_{k+1})) \right)
\]
satisfies
\[
\|e_{k}\| \le 
\left\{\begin{array}{ll}
h^2 \left( |\tfrac12-\zeta| + \tfrac14\right) \max_{t\in[0,T]}\|\psi''(t)\|, & \text{if}\ \psi\in C^2([0,T],\mathcal H),\\*[2ex]
\frac{1}{12}h^3 \max_{t\in[0,T]}\|\psi'''(t)\|, &
\text{if}\ \zeta = \frac12\ \text{and}\ \psi\in C^3([0,T],\mathcal H),
\end{array}
\right.
\]
for all $k$ such that $t_{k+1}=(k+1)h\le T$.
\end{lemma}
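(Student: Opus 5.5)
Since $\psi$ solves \eqref{eq:PDE_original}, $f(t,\cdot,\psi(t))=\psi'(t)$, so the local error is purely a quadrature error for $\psi'$:
\[
e_k=\int_{t_k}^{t_{k+1}}\psi'(t)\,\dd t - h\bigl(\zeta\psi'(t_k)+\hat\zeta\psi'(t_{k+1})\bigr).
\]
The plan is to split this into the trapezoidal-rule error plus a correction proportional to $\frac12-\zeta$. Writing $\zeta=\frac12+(\zeta-\frac12)$ and $\hat\zeta=\frac12-(\zeta-\frac12)$, we get
\[
e_k=\underbrace{\int_{t_k}^{t_{k+1}}\psi'(t)\,\dd t-\tfrac h2\bigl(\psi'(t_k)+\psi'(t_{k+1})\bigr)}_{=:T_k}+\bigl(\tfrac12-\zeta\bigr)h\bigl(\psi'(t_{k+1})-\psi'(t_k)\bigr).
\]
All identities are taken in $\hil$ via Bochner integrals. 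Taylor's formula with integral remainder holds verbatim for $C^2$ or $C^3$ Hilbert-space-valued functions, so no mean-value theorem is needed, only norm estimates of integrals.

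\emph{Case $\psi\in C^2$.} The correction term equals $(\frac12-\zeta)h\int_{t_k}^{t_{k+1}}\psi''(s)\,\dd s$, hence has norm at most $|\frac12-\zeta|h^2\max\|\psi''\|$. For $T_k$, we use the Peano-kernel representation of the trapezoidal rule applied to $g=\psi'\in C^1$:
\[
T_k=\int_{t_k}^{t_{k+1}}\Bigl(s-t_k-\tfrac h2\Bigr)\psi''(s)\,\dd s,
\]
which follows by integrating by parts, or by checking that both sides vanish for constant $g$ and agree when differentiated. Since $\int_{t_k}^{t_{k+1}}|s-t_k-\frac h2|\,\dd s=\frac{h^2}{4}$, we obtain $\|T_k\|\le\frac{h^2}{4}\max\|\psi''\|$. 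Together these give the first bound with constant $|\frac12-\zeta|+\frac14$.

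\emph{Case $\zeta=\frac12$, $\psi\in C^3$.} The correction term vanishes. For $T_k$, we integrate by parts once more using the kernel $K(s)=\frac12(s-t_k)(s-t_{k+1})$, which vanishes at both endpoints:
\[
T_k=-\int_{t_k}^{t_{k+1}}\tfrac12(s-t_k)(t_{k+1}-s)\,\psi'''(s)\,\dd s.
\]
Since $\int_{t_k}^{t_{k+1}}\frac12(s-t_k)(t_{k+1}-s)\,\dd s=\frac{h^3}{12}$, this yields $\|T_k\|\le\frac{h^3}{12}\max\|\psi'''\|$.

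The only delicate point is making the kernel identities rigorous in $\hil$ and fixing the signs. I would verify them by differentiating with respect to $t_{k+1}$, or by integrating by parts directly, which is valid for $C^1$ $\hil$-valued functions. Since only norms of the kernels enter the estimates, the signs are irrelevant to the bounds.
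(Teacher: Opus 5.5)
Your proposal is correct and follows essentially the same route as the paper: you split $e_k$ into the trapezoidal-rule error plus a multiple of $(\tfrac12-\zeta)\int_{t_k}^{t_{k+1}}\psi''$, and bound the two pieces by the midpoint-kernel estimate ($h^2/4$) or the $h^3/12$ estimate, and by $|\tfrac12-\zeta|h^2\max\|\psi''\|$, respectively. Your explicit Peano kernel for the $C^3$ case justifies in $\hil$ what the paper simply calls classical. There are two harmless sign slips: the correction term should be $(\zeta-\tfrac12)h\bigl(\psi'(t_{k+1})-\psi'(t_k)\bigr)$, and the first kernel identity should read $T_k=-\int_{t_k}^{t_{k+1}}(s-t_k-\tfrac h2)\psi''(s)\,\dd s$ (the paper's formula for $I_1$ has the same slip); as you note, neither affects the bounds.
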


\begin{proof}
We use the linear interpolant 
$\ell(t) = \psi'(t_k) + \frac{t-t_k}{h}(\psi'(t_{k+1})-\psi'(t_k))$ 
of $\psi'(t)$ to write the local error as
\begin{align*}
    e_k &= \int_{t_k}^{t_{k+1}} \psi'(t) dt - h\left(\zeta \psi'(t_k) + \hat\zeta \psi'(t_{k+1})\right)\\
    &= \int_{t_k}^{t_{k+1}} \left(\psi'(t)-\ell(t)\right) dt + \frac{h}{2}\left(\psi'(t_{k+1})+\psi'(t_k)\right)
    - h\left(\zeta \psi'(t_k) + \hat\zeta \psi'(t_{k+1})\right)\\
    &= \int_{t_k}^{t_{k+1}} \left(\psi'(t)-\ell(t)\right) dt - h(\tfrac12-\zeta)\int_{t_k}^{t_{k+1}}\psi''(t) \, dt\\
    &=: I_1 + I_2.
\end{align*}
The first integral is the error of the single-step trapezoidal rule. If $\psi\in C^3(\R,\mathcal H)$, it  
satisfies the classical estimate
\[
\| I_1\| \le \frac{h^3}{12}\max_{t\in[0,T]}\|\psi'''(t)\|.
\]
Alternatively, see \cite{CruzNeugebauer02}, 
\[
I_1 = \int_{t_{k}}^{t_{k+1}} (t-m_k) \psi''(t) dt, 
\]
where $m_k = \frac12(t_k+t_{k+1})$ is the midpoint. This implies
$\|I_1\| \le \frac{h^2}{4} \max_{t\in[0,T]}\|\psi''(t)\|$.
For the second integral, we clearly have $\|I_2\| \le h^2|\tfrac12-\zeta|\max_{t\in[0,T]}\|\psi''(t)\|$.
\end{proof}

\section{Nonlinear least squares estimate}

\begin{lemma}[Nonlinear least squares problem]\label{lem:least}
Consider continuous $A:\C^p\to L(\C^p,\hil)$ and $b:\C^p\to\hil$ such that there exist $\sigma_0>0$ and 
$\lambda>0$ such that for all $h>0$ and $\eta,y\in\C^p$
\[
\sigma_{\min}(A(h\eta))\ge\sigma_0\quad \text{and}\quad \Vert b(\eta)- b(y)\Vert \le \lambda \Vert\eta-y\Vert.
\]
If $h\le \sigma_{0}/\lambda$, then the nonlinear least squares problem 
$\Vert A(h\eta)\eta - b(h\eta)\Vert = \min_{\eta\in\C^p} !$ has at least one solution $\theta(h)$, and each of 
these solutions satisfy
\[
\Vert \theta(h)\Vert \le \frac{2\Vert b(0)\Vert}{\sigma_{0}-\lambda h}. 
\]
\end{lemma}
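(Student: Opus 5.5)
Existence and the bound come from a coercivity estimate for the objective $J(\eta) = \Vert A(h\eta)\eta - b(h\eta)\Vert$, combined with comparison against the trial point $\eta = 0$. First, for any $\eta\in\C^p$, the singular value bound and the triangle inequality give
\[
J(\eta) \ge \Vert A(h\eta)\eta\Vert - \Vert b(h\eta)\Vert \ge \sigma_0 \Vert\eta\Vert - \Vert b(0)\Vert - \lambda h\Vert\eta\Vert = (\sigma_0-\lambda h)\Vert\eta\Vert - \Vert b(0)\Vert,
\]
where I used $\Vert b(h\eta)-b(0)\Vert\le\lambda h\Vert\eta\Vert$. On the other hand, $J(0) = \Vert b(0)\Vert$, because the term $A(0)\cdot 0$ vanishes.

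For existence I would restrict to the case $h<\sigma_0/\lambda$. Then the lower bound shows that $J(\eta) > \Vert b(0)\Vert = J(0)$ whenever $\Vert\eta\Vert > 2\Vert b(0)\Vert/(\sigma_0-\lambda h)$. So the infimum of $J$ over $\C^p$ equals its infimum over the closed ball $B$ of that radius, and $B$ contains $0$. The map $\eta\mapsto A(h\eta)\eta - b(h\eta)$ is continuous into $\hil$: $A$ is continuous into $L(\C^p,\hil)$, so $\eta\mapsto A(h\eta)\eta$ is continuous, and $b$ is Lipschitz. Hence $J$ is continuous, $B$ is compact in finite dimensions, and $J$ attains its minimum on $B$. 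This minimizer is a global minimizer.

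For the bound, let $\theta(h)$ be any minimizer. Then $J(\theta(h))\le J(0)=\Vert b(0)\Vert$, and combining this with the coercivity estimate gives $(\sigma_0-\lambda h)\Vert\theta(h)\Vert\le 2\Vert b(0)\Vert$, which is the claim.

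The main obstacle is the borderline case $h=\sigma_0/\lambda$, which the statement allows. There the coercivity degenerates and the stated bound is $+\infty$, so it holds trivially, but existence is not guaranteed by this argument. I would either note that the estimate is vacuous there and assume strict inequality for existence, which is all that Theorem~\ref{prop:opt-discr} uses through its denominator $\sigma_0-h\hat\zeta\lambda\gamma_1$, or handle the borderline case separately. Everything else in the argument is routine.
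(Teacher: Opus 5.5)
Your proposal is correct and follows the paper's proof: the same coercivity bound $G(h,\eta)\ge(\sigma_0-\lambda h)\Vert\eta\Vert-\Vert b(0)\Vert$, existence of a minimizer from coercivity (which you make rigorous via the compact ball), and comparison with $G(h,0)=\Vert b(0)\Vert$ to get the estimate. Your caution about the borderline case $h=\sigma_0/\lambda$ is justified, since the paper's proof also silently assumes the strict inequality $h<\sigma_0/\lambda$ for existence.
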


\begin{proof}
We denote $G(h,\eta) = \Vert A(h\eta)\eta - b(h\eta)\Vert$. We use the lower bound $\Vert A(h\eta)\eta\Vert\ge\sigma_{0}\Vert \eta\Vert$ and the Lipschitz bound 
$\Vert b(h\eta)\Vert \le \Vert b(0)\Vert + \lambda h\Vert\eta\Vert$ to deduce
\begin{align*}
    G(h,\eta) &\ge \Vert A(h\eta)\eta\Vert - \Vert b(h\eta)\Vert\\
    &\ge \left(\sigma_{0} - \lambda h \right)\Vert\eta\Vert - \Vert b(0)\Vert.
\end{align*}
Since $h<\sigma_{0}/\lambda$, we have $G(h,\eta)\to\infty$ for $\Vert\eta\Vert\to\infty$, and coercivity guarantees the existence of at least one minimizer $\theta(h)$. Such a minimizer satisfies
\begin{align*}
    \left(\sigma_{0} - \lambda h \right)\Vert\theta(h)\Vert - \Vert b(0)\Vert &\le G(h,\theta(h)) 
    \le G(h,0) = \Vert b(0)\Vert,
\end{align*}
which implies the claimed bound on $\Vert\theta(h)\Vert$. 
\end{proof}

\section{Gaussian integrals}\label{app:Gauss}

\subsection{Monomial formula}
For proving Formula \ref{prop:n_gaussian_integral}, we compute
\begin{align*}
    G_n(\theta) &= \int_\R x^n e^{-a x^2 +\kappa x + \gamma } \dd x
     \ =\  \frac{\partial^n}{\partial \kappa^n} \int_\R e^{-a x^2 +\kappa x + \gamma } \dd x \\
     &\stackrel{\eqref{eq:G0}}{=} e^{\gamma } \sqrt{\frac{\pi}{a}} \frac{\partial^n}{\partial \kappa^n} e^{\frac{\kappa^2}{4a} } 
     \ = \  (2\sqrt{a})^{-n} \tilde{H}_n\left( \frac{\kappa}{2\sqrt{a}} \right)  \sqrt{\frac{\pi}{a}} e^{\frac{\kappa^2}{4a} +\gamma } ,
\end{align*}    
where we have used the Rodrigues formula for the Hermite polynomials.
For the gradient we simply observe
\begin{align*}
    \nabla_{\theta} G_{n}(\theta) = \left( \begin{array}{c}
       \partial_a G_n(\theta) \\
       \partial_\kappa G_n(\theta) \\
       \partial_\gamma G_n(\theta)
    \end{array} \right)^\trans  
    =  \left( \begin{array}{c}
       - \int_{\R} x^{n+2} g(\theta,x) \dd x\\
       \int_{\R} x^{n+1} g(\theta,x) \dd x \\
       \int_{\R} x^n g(\theta,x) \dd x
    \end{array} 
    \right)^\trans .
\end{align*}

\subsection{Gauss--Hermite quadrature}
The nodes $x_1,\ldots,x_n$ of the Gauss--Hermite quadrature rule \eqref{eq:GH_quadrature} are the roots of the $n$th Hermite polynomial $H_n(x)$ and the weights are
\begin{equation*}
    w_j = \frac{2^{n+1} n! \sqrt{\pi}}{[H_{n+1}(x_j)]^2},\quad j=1,\ldots,n. 
\end{equation*}
The classic error representation for a $2n$ times continuously differentiable integrand function $\varphi$ is
\begin{equation*}
    \int_{\R} \varphi(x) e^{-x^2}\dd x = \mathcal{Q}_n(\varphi) + \frac{n!\sqrt{\pi}}{2^n (2n)!} \varphi^{(2n)}(\xi),
\end{equation*}
for some $\xi\in\R$, see e.g. \cite[\S3.6]{DR}.


\section{Numerical experiments for Fokker--Planck equations} \label{sec:FP_eq}

The purpose of this appendix is merely to illustrate that the
residual formulations developed in the main
text are not restricted to Schr\"odinger equations. We thus consider the Fokker--Planck equation with time-dependent coefficients
\[
\partial_t \psi (t,x) = D(t) \Delta \psi(t,x) - \nabla (w(x,t) \psi(x,t)),
\]
where $D(t)$ is the diffusion coefficient and $w(x,t)=A(t)x+b(t)$ is the drift coefficient. 
It is clear that the operator $f(t,x,\cdot)=D(t) \Delta_x - \nabla_x (w(x,t) \cdot)$ acts on the Gaussian basis linearly, i.e., $f(t,x,b(\theta,x)) = \phi(\theta,t,x)b(\theta,x)$. For the univariate case $d=1$,
the function $\phi(\theta,t,x)$ is given by
\[
\phi(\theta,t,x) = D(t) [ -2a + (-2a x + \kappa)^2 ] - A(t) - w(x,t) (-2a x + \kappa).
\]

\Cref{fig:FP} shows the evolution of a single Gaussian wave packet $\psi_0(x) = \mathcal{N}_0 e^{-(x+1)^2}$ in four different regimes:
a) $A=0$, $b=0$, $D=0.02$; 
b) $A=0.02$, $b=0$, $D=0$; 
c) $A=0$, $b=4/10+\sin(2\pi t)$, $D=0$;
d) $A=0.01$, $b=4/10+\sin(2\pi t)$, $D=0.01$.
We recall that b) is the diffusion equation and c) is the transport equation. As in the examples in \Cref{sec:Num_Experiments}, all the numerical solutions were computed using the discretize-then formulation with the parameters $\zeta=\frac12$ and $h=10^{-2}$.

\begin{figure}[htb!] 
    \centering 
    \begin{subfigure}{0.45\textwidth}
    \includegraphics[width=\linewidth]{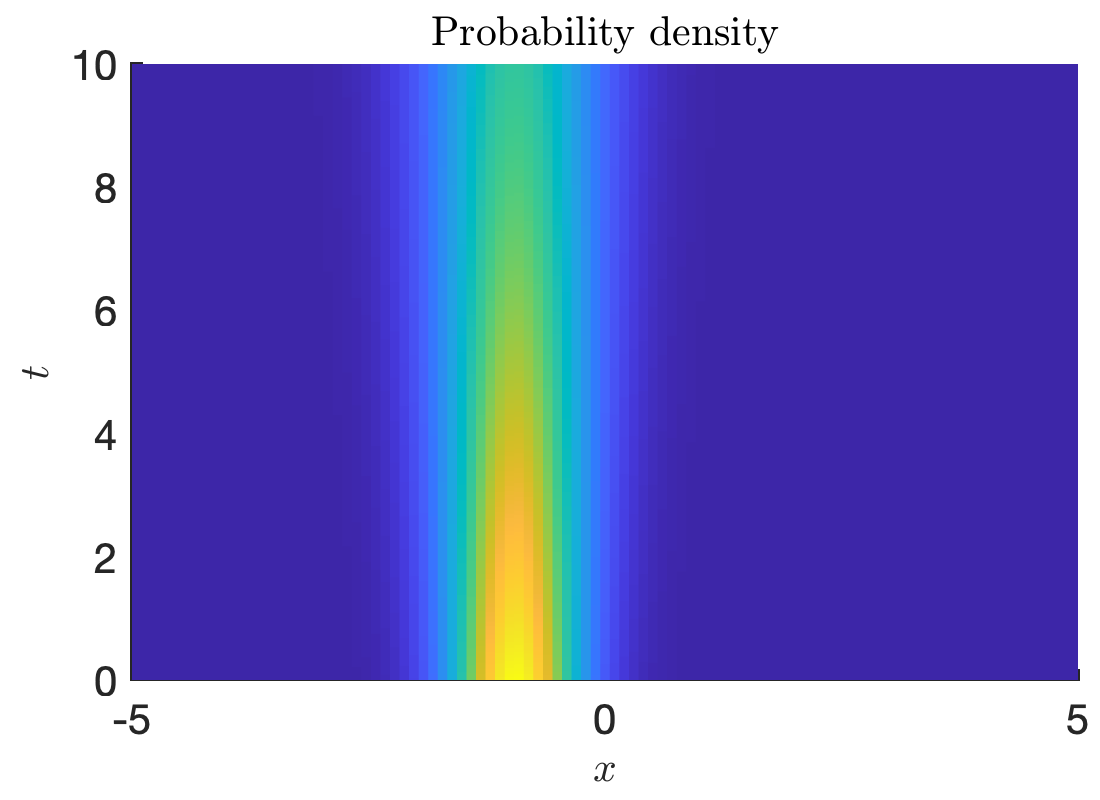}
    \caption{$A=0$, $b=0$, $D\neq 0$}
    \end{subfigure}
\hfill
    \begin{subfigure}{0.45\textwidth}
    \includegraphics[width=\linewidth]{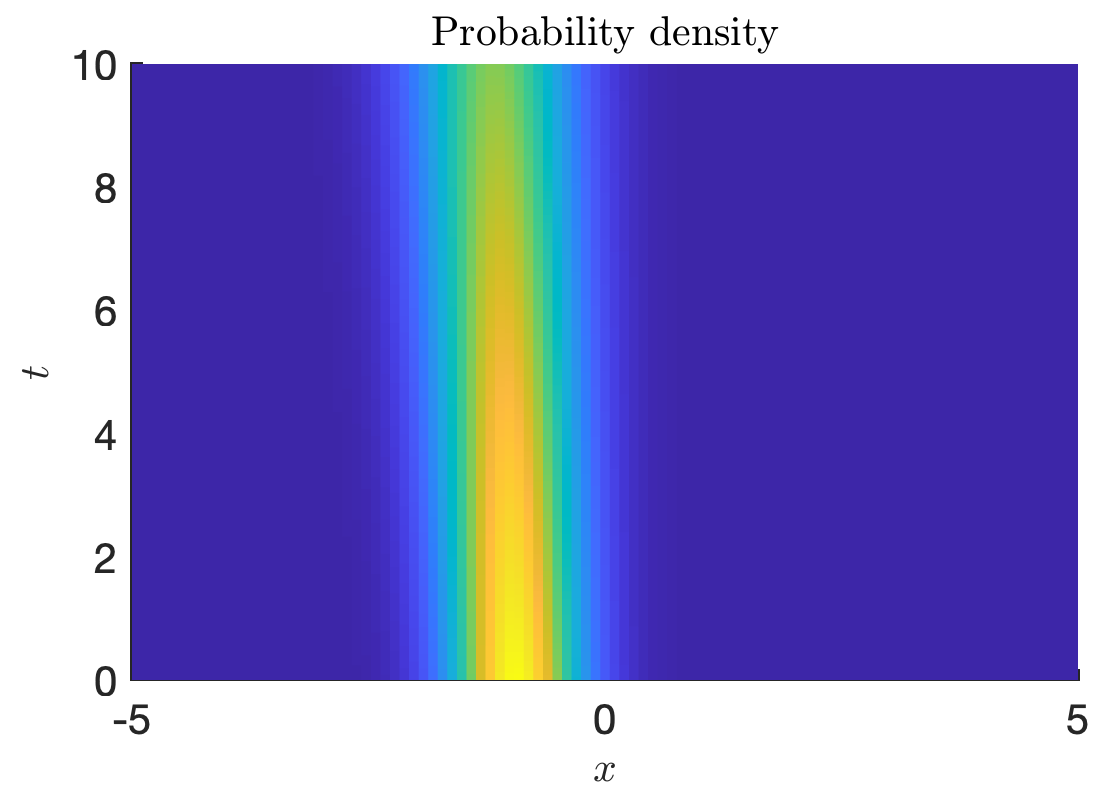}
    \caption{$A\neq 0$, $b=0$, $D=0$}
    \end{subfigure}

    \vspace{0.5em}

    \begin{subfigure}{0.45\textwidth}
    \includegraphics[width=\linewidth]{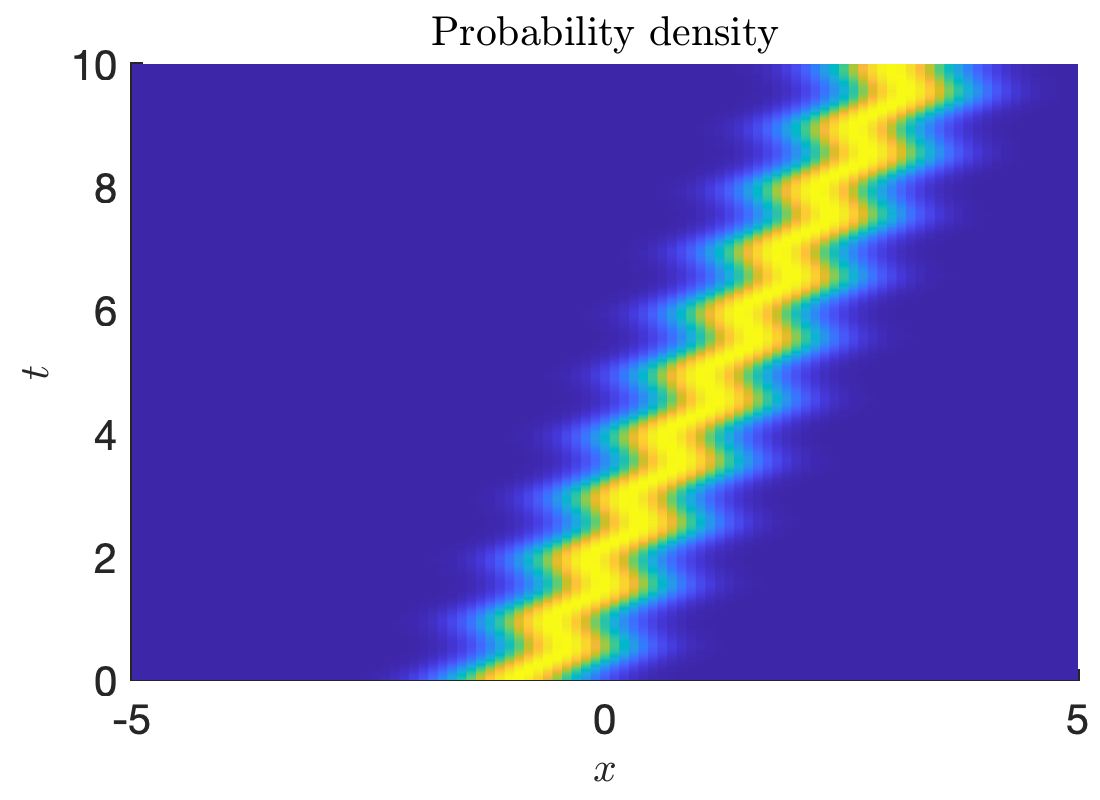}
    \caption{$A=0$, $b\neq 0$, $D=0$}
    \end{subfigure}
\hfill
    \begin{subfigure}{0.45\textwidth}
    \includegraphics[width=\linewidth]{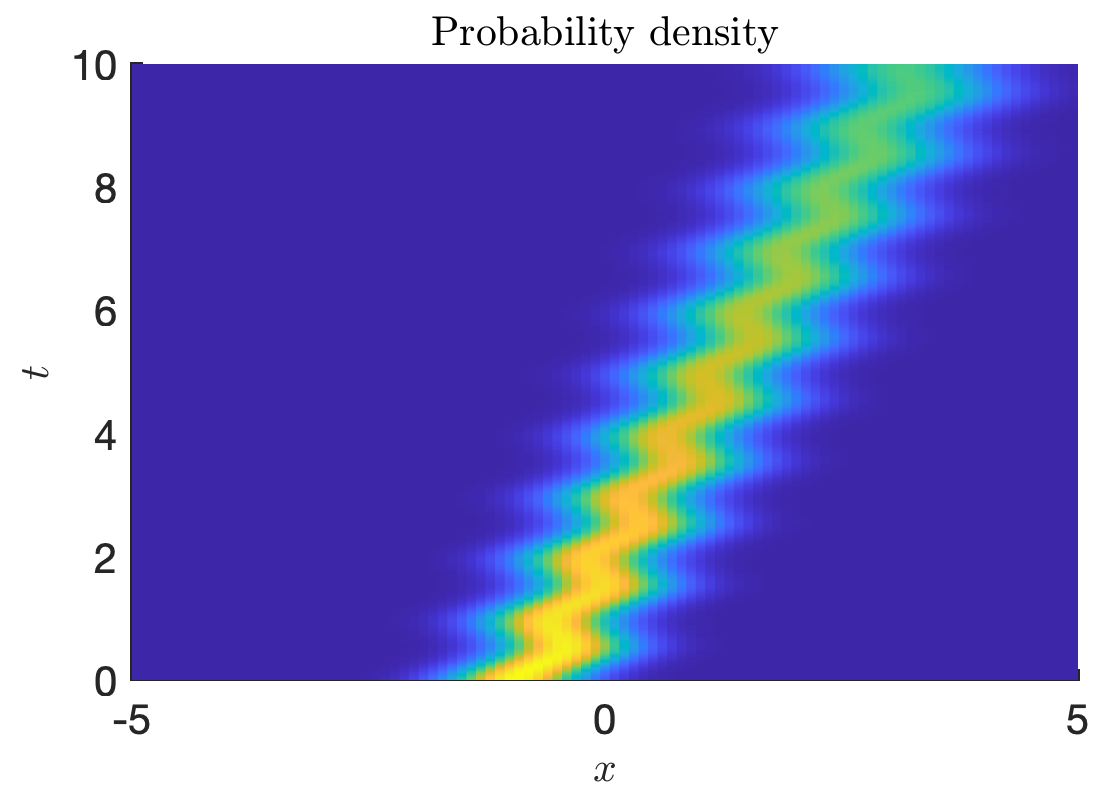}
    \caption{$A\neq 0$, $b\neq 0$, $D\neq 0$ \label{fig:FP4}}
    \end{subfigure}

    \caption{Evolution of a single Gaussian wave packet following the Fokker--Planck equation in four different regimes. The precise values of the non-zero coefficients are indicated in the text. } \label{fig:FP}
\end{figure}

In this case, the numerical approximation can be compared with the analytical solution. \Cref{fig:FP_difference} shows the difference $\Vert \psi(t_k,\cdot)-u(t_k,\cdot)\Vert^2_{L^2}$ in function of time, corresponding to the example in \Cref{fig:FP4}, which will decrease with the step-size until reaching the limit due to round-off errors.
For the analytical solution, recall that given a single Gaussian wave packet as the initial condition, the solution will remain a single Gaussian.
We write this solution in the form $\psi(t,x)=e^{-\frac{1}{S}(x-\nu)^2+\tilde{\gamma}}$ with coefficients $\tilde{\theta} =(S,\nu,\tilde{\gamma})$. Then, $\psi$ is described by the solutions of the ODE system
\begin{align*}
    S'(t) = 4D+2AS, \quad
    \nu'(t) = A\nu + b, \quad
    \tilde{\gamma}'(t) = -A-2D/S.
\end{align*}
Moreover, assume that the coefficient $A\geq 0$ is constant.
Then, the solution of the above ODE system with initial conditions $\tilde{\theta}_0=(S_0,\nu_0,\tilde{\gamma}_0)$ is
\begin{align*}
S(t) &= \left\{ \begin{array}{ll}
    S_0 + 4 \int_0^t D(t') dt', & \text{if } A=0,  \\
    e^{2At} \left[ S_0 + 4 \int_0^t D(t') e^{-2At'} dt' \right], & \text{if } A>0.
\end{array} \right. 
\\
\nu(t) &= \left\{ \begin{array}{ll}
    \nu_0 + \int_0^t b(t') dt', & \text{if } A=0,  \\
    e^{At} \left[ \nu_0 + \int_0^t b(t') e^{-At'}dt' \right] , & \text{if } A>0,
\end{array} \right. 
\\
\tilde{\gamma}(t) &= \tilde{\gamma}_0 -\frac{1}{2} \ln S(t) , \quad \text{if } A\geq 0,
\end{align*}
For instance, considering constant coefficients $A,D>0$ and the time-dependent function $b(t)=\alpha+\beta\sin(\omega t)$, the solutions would be
\[
S(t) = e^{2At} \left[ S_0 + \frac{2D}{A}\left( 1-e^{-2At}\right) \right], 
\]
\[
\nu(t) = e^{At} \left[ \nu_0 + \frac{\alpha}{A} (1-e^{-At}) + \frac{\beta\omega}{A^2+\omega^2} \right] - \frac{\beta}{A^2+\omega^2}\left[ \omega\cos(\omega t) + A\sin(\omega t) \right] .
\]

\begin{figure}[htb!]
    \centering
    \includegraphics[width=0.5\linewidth]{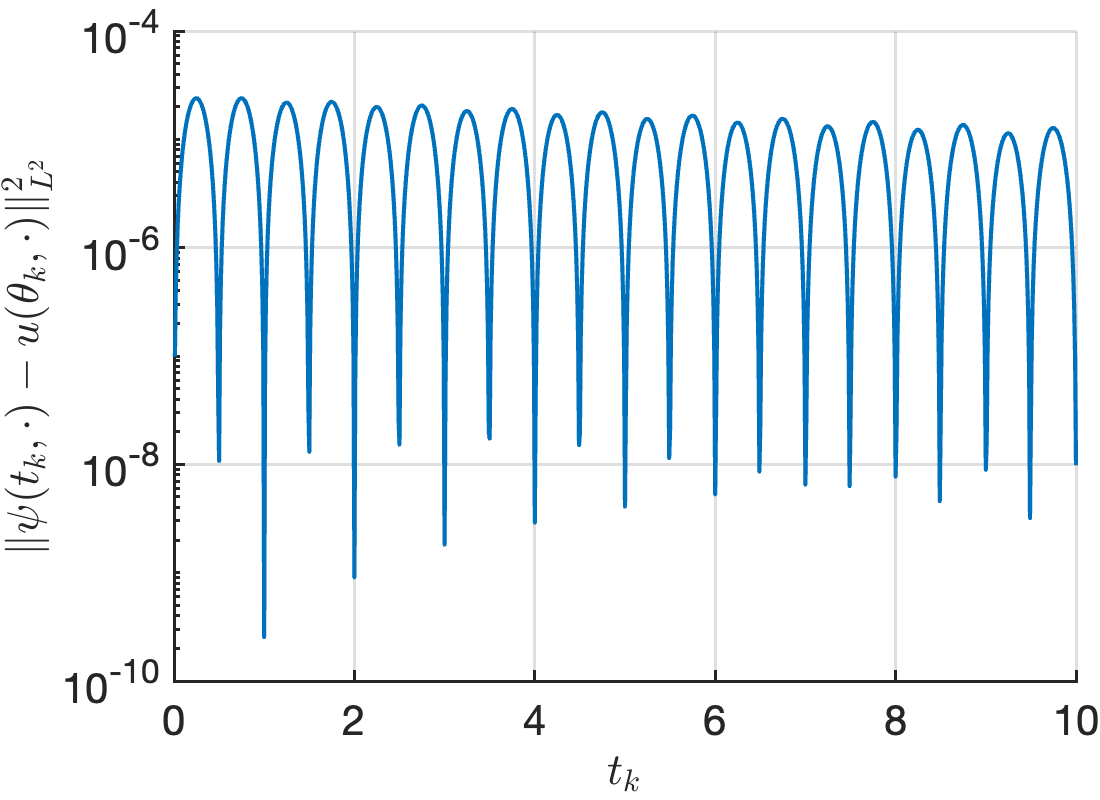}
    \caption{Squared $L^2$-norm of the difference between the exact solution $\psi(t,\cdot)$ and the numerical solution $u(t,\cdot)$ for $t\in [0,10]$, corresponding to the example with $A\neq 0$, $b\neq 0$, $D\neq 0$, see \Cref{fig:FP4}.} \label{fig:FP_difference}
\end{figure}

\printbibliography
\end{document}